
\documentclass[12pt]{amsart}
\usepackage{amsmath,amssymb,amsthm,color, euscript, enumerate,comment}
\usepackage{dsfont}
\usepackage{longtable}

\vfuzz2pt 
\hfuzz2pt 
\setlength{\topmargin}{0cm} 
\setlength{\oddsidemargin}{0cm}
\setlength{\evensidemargin}{0cm} 
\setlength{\textheight}{23cm}
\setlength{\textwidth}{16cm}




\newcommand{\1}{\mathds{1}}



\newcommand{\maru}[1]{{\ooalign{\hfil#1\/\hfil\crcr
\raise.167ex\hbox{\mathhexbox20D}}}}

\newcommand{\ruby}[2]{%
 \leavevmode
 \setbox0=\hbox{#1}%
 \setbox1=\hbox{\tiny #2}%
 \ifdim\wd0>\wd1 \dimen0=\wd0 \end{lemma}se \dimen0=\wd1 \fi
 \hbox{%
   \kanjiskip=0pt plus 2fil
   \xkanjiskip=0pt plus 2fil
   \vbox{%
     \hbox to \dimen0{%
       \tiny \hfil#2\hfil}%
     \nointerlineskip
     \hbox to \dimen0{\mathstrut\hfil#1\hfil}}}}




\newcommand{\Z}{\mathbb{Z}}
\newcommand{\C}{\mathbb{C}}
\newcommand{\R}{\mathbb{R}}

\newcommand{\Q}{\mathbb{Q}}

\newcommand{\g}{\mathfrak{g}}

\newcommand{\GO}{\mathrm{GO}}

\newcommand{\PO}{\mathrm{P}\Omega}

\newcommand{\Dih}{\mathrm{Dih}}
\newcommand{\rank}{\mathrm{rank}\,}


\newcommand{\h}{\mathfrak{h}}
\newcommand{\F}{\mathbb{F}}

\newcommand{\Sym}{{\mathfrak{S}}}
\newcommand{\Alt}{{\mathfrak{A}}}
\newcommand{\Imm}{\mathrm{Im}\,}
\newcommand{\Ker}{\mathrm{Ker}\,}

\newcommand{\aut}{\mathrm{Aut}\,}
\newcommand{\Aut}{\mathrm{Aut}\,}
\newcommand{\Inn}{\mathrm{Inn}\,}

\newcommand{\AGL}{\mathrm{AGL}}
\newcommand{\Com}{\mathrm{Com}}




\newcommand{\Stab}{\mathrm{Stab}}


\newcommand{\irr}{\mathrm{Irr}\,}

\makeatletter \@addtoreset{equation}{section}

\theoremstyle{plain}
\newtheorem{theorem}{Theorem}[section]
\newtheorem{proposition}[theorem]{Proposition}
\newtheorem{lemma}[theorem]{Lemma}

\theoremstyle{definition}
\newtheorem{definition}[theorem]{Definition}

\theoremstyle{remark}
\newtheorem{remark}[theorem]{Remark}
\numberwithin{equation}{section}

\title[Automorphism groups and uniqueness of holomorphic VOAs]{Automorphism groups and uniqueness of holomorphic vertex operator algebras of central charge $24$}
 \subjclass[2010]{Primary  17B69, Secondary 20B25}

 \keywords{Automorphism groups, Holomorphic vertex operator algebras, Simple current extensions}
\author{Koichi Betsumiya}
  \address[K. Betsumiya] {Graduate School of Science and Technology,
Hirosaki University, Hirosaki 036-8561, Japan} 
  \email{betsumi@hirosaki-u.ac.jp}
\author{Ching Hung Lam} %
  \address[C. H. Lam] {Institute of Mathematics, Academia Sinica, Taipei 10617, Taiwan} 
  \email{chlam@math.sinica.edu.tw}
\author[H. Shimakura]{Hiroki Shimakura}%
\address[H. Shimakura]{Graduate School of Information Sciences, Tohoku University, Sendai, 980-8579, Japan }%
\email {shimakura@tohoku.ac.jp}%
\date{}
\thanks{C.H. Lam was partially supported by a research grant AS-IA-107-M02 of Academia Sinica  and MoST grants  107-2115-M-001-003-MY3 and 110-2115-M-001-011-MY3 of Taiwan}
\thanks{H. Shimakura was partially supported by JSPS KAKENHI Grant Numbers JP19KK0065 and JP20K03505.}

\newcommand{\sfr}[2]{\leavevmode\kern-.1em
  \raise.5ex\hbox{\the\scriptfont0 #1}\kern-.1em
  /\kern-.15em\lower.25ex\hbox{\the\scriptfont0 #2}}

\newcommand{\Out}{{\mathrm{Out}}\,}
\newcommand{\OOut}{{\mathrm{Out}}\,}

\pagestyle{plain}
\begin{document}

\begin{abstract}
We describe the automorphism groups of all holomorphic vertex operator algebras of central charge $24$ with non-trivial weight one Lie algebras by using their constructions as simple current extensions. 
We also confirm a conjecture of G. H\"ohn on the numbers of holomorphic vertex operator algebras of central charge $24$ obtained as inequivalent simple current extensions of certain vertex operator algebras, which gives another proof of the uniqueness of holomorphic vertex operator algebras of central charge $24$ with non-trivial weight one Lie algebras.
\end{abstract}

\maketitle


\section{Introduction}
The classification of (strongly regular) holomorphic vertex operator algebras (VOAs) of central charge $24$ has been completed except for the characterization of the moonshine VOA; more precisely, the following are proved:
\begin{enumerate}[(a)]
\item The weight one Lie algebra of a holomorphic VOA of central charge $24$ is $0$, $24$-dimensional abelian or one of the $69$ semisimple Lie algebras in \cite{Sc93}; the list of these $71$ Lie algebras is called \emph{Schellekens' list}.
\item For any Lie algebra $\g$ in Schellekens' list, there exists a holomorphic VOA of central charge $24$ whose weight one Lie algebra is isomorphic to $\g$.
\item The isomorphism class of a holomorphic VOA $V$ of central charge $24$ with $V_1\neq0$ is uniquely determined by the weight one Lie algebra structure on $V_1$.
\end{enumerate}
Item (a) was proved in \cite{Sc93,EMS} (see \cite{ELMS} for another proof).
Items (b) and (c) were proved by using case by case analysis (see \cite{LS19} and \cite[Introduction]{LS7}); several uniform approaches for (b) and (c) are also discussed in \cite{Ho2,MS1, MS2, HM,CLM,LM}.

For our purpose, we first recall a uniform approach proposed by \cite{Ho2} briefly.
Let $V$ be a holomorphic VOA of central charge $24$.
Then $V_1$ is a reductive Lie algebra of rank at most $24$ (\cite{DM04b});
if $\rank V_1=24$, then $V$ is isomorphic to a Niemeier lattice VOA (\cite{DM04b}) and that if $\rank V_1=0$, equivalently, $V_1=0$, then it is conjectured in \cite{FLM} that $V$ is  isomorphic to the moonshine VOA.
In this article, we assume that $0<\rank V_1<24$.
Then $V_1$ is semisimple and the subVOA $\langle V_1\rangle$ generated by $V_1$ has central charge $24$, that means $\langle V_1\rangle$ is a full subVOA of $V$ (\cite{DM04a}).
Let $\h$ be a Cartan subalgebra of $V_1$.
The following item (d) was essentially proved in \cite{Ho2}; the necessary assumptions are confirmed in \cite{Lam20} 
(see also \cite{ELMS} for another proof):

\begin{enumerate}[(a)]
\setcounter{enumi}{3}
\item 
The commutant $W=\Com_V(\h)$ of $\h$ in $V$ is isomorphic to the fixed-point subVOA $V_{\Lambda_g}^{\hat{g}}$ of the lattice VOA $V_{\Lambda_g}$ with respect to a (standard) lift $\hat{g}\in\Aut(V_{\Lambda_g})$ of an isometry $g_{|\Lambda_g}$ of $\Lambda_g$, where $g$ is an isometry of the Leech lattice $\Lambda$ in  
one of the following $10$ conjugacy classes (as the notations in \cite{ATLAS})
$$2A,2C,3B,4C,5B,6F,6G,7B,8E\ \text{and}\ 10F,$$ 
and $\Lambda_g$ is the coinvariant lattice of $g$ (see Definition \ref{Def:coinv}).
In addition, the conjugacy class of $g$ is uniquely determined by the Lie algebra structure of $V_1$.
\end{enumerate}
The commutant $\Com_V(W)$ of $W$ in $V$ is isomorphic to a lattice VOA $V_{L}$.
In fact, the lattice $L=L_\g$, called the orbit lattice in \cite{Ho2}, is also uniquely determined by the Lie algebra structure of $\g=V_1$.
Note that some non-isomorphic Lie algebras in Schellekens' list give isometric orbit lattices.
Since both $V_L$ and $W$ have group-like fusion (\cite{Do,Lam20}), $V$ is a simple current extension of $V_{L}\otimes W$.
In order to prove (b), it suffices to construct all holomorphic VOAs of central charge $24$ as simple current extensions of $V_L\otimes W$, which was discussed in \cite[Theorem 4.4]{Ho2} under some assumptions (see also \cite{Lam20}).
In order to prove (c), it suffices to classify all holomorphic VOAs of central charge $24$ as simple current extensions of $V_L\otimes W$ up to isomorphism, which
can be proved by confirming the conjecture \cite[Conjecture 4.8]{Ho2} on the number of inequivalent simple current extensions of $V_L\otimes W$ that form holomorphic VOAs of central charge $24$.

Another question is to determine the automorphism groups of holomorphic VOAs of central charge $24$.
Our strategy is to describe the automorphism group of a VOA via its weight one Lie algebra.
Let $T$ be a VOA of CFT-type. 
Set
 $$K(T):=\{g\in\Aut(T)\mid g=id\ \text{on}\ T_1\},$$
the subgroup of $\Aut(T)$ which acts trivially on $T_1$.
Let ${\Aut}(T)_{|T_1}$ denote the restriction of $\Aut(T)$ to $T_1$.
Then $${\Aut}(T)_{|T_1}\cong\Aut(T)/K(T)\subset\Aut(T_1).$$
Recall that $\Aut(T)$ contains the inner automorphism group $\Inn(T)$, the normal subgroup generated by inner automorphisms $\{\exp(a_{(0)})\mid a\in T_1\}$.
Let ${\Inn}(T)_{|T_1}$ denote the restriction of $\Inn(T)$ to $T_1$, that is,  $${\Inn}(T)_{|T_1}\cong \Inn(T)/ (K(T)\cap\Inn(T)).$$
Clearly, ${\Inn}(T)_{|T_1}$ is isomorphic to the inner automorphism group $\Inn(T_1)$ of $T_1$.

Define $$\Out(T):=\Aut(T)/\Inn(T).$$ In Proposition \ref{L:idh}, we will show that $$K(T)\subset\Inn(T)$$  when $T$ is a holomorphic VOA of central charge $24$ with $T_1\neq0$. 
For these cases, $$\Out(T)=\Aut(T)/\Inn(T)\cong {\Aut}(T)_{|T_1}/{\Inn}(T)_{|T_1},$$ and $\Out(T)$ is a subgroup of the outer automorphism group $\OOut(T_1)\cong\Aut(T_1)/\Inn(T_1)$ of the weight one Lie algebra $T_1$.
Note that the inclusion $K(T)\subset\Inn(T)$ does not hold in general; for example, the moonshine VOA $V^\natural$ satisfies $K(V^\natural)=\Aut(V^\natural)\neq1$ and $\Inn(V^\natural)=1$.
Therefore, for a holomorphic VOA $T$ of central charge $24$ with $T_1\neq0$, we have 
$$\Aut(T)\cong K(T).({\Inn}(T)_{|T_1}.\Out(T)),$$
where $A.B$ means a group $G$ that contains a normal subgroup $A$ with $G/A\cong B$.
Hence $\Aut(T)$ is roughly described by the groups ${\Inn}(T)_{|T_1}$, $K(T)$ and $\Out(T)$.
Note that ${\Inn}(T)_{|T_1}(\cong\Inn(T_1))$ is well-studied. 

For a holomorphic lattice VOA $V_N$
associated with a Niemeier lattice $N$, the groups $K(V_N)$ and $\Out(V_N)$ can be easily determined by the description of $\Aut(V_N)$ in \cite{DN99} (see Remark \ref{R:KOknown}).
For the $14$ holomorphic VOAs $V_N^{{\rm orb}(\theta)}$
 obtained by applying the $\Z_2$-orbifold construction to $V_N$ and a lift $\theta$ of the $-1$-isometry of $N$, the groups $K(V_N^{{\rm orb}(\theta)})$ and $\Out(V_N^{{\rm orb}(\theta)})$ are determined in \cite{Sh20} by using the explicit constructions.
For some 
holomorphic VOAs $V$ of central charge $24$,
 $K(V)$ is determined in \cite{LS7} by using the module structure of $V$ over the simple affine VOA generated by $V_1$ and its fusion product.

In this article, we assume (a), (b) and (d) and describe the orbit lattice $L$ for each semisimple Lie algebra $\g$ in Schellekens' list of rank less than $24$; this shows that the orbit lattice $L=L_\g$ is uniquely determined by $\g$, up to isometry.
Note that the orbit lattices have been described in \cite{Ho2} by using Niemeier lattices.
In addition, we determine the groups $K(V)$ and $\Out(V)$ for all holomorphic VOAs $V$ of central charge $24$ with $0<\rank V_1<24$ by using $O(L)$, $\Aut(W)$ and the constructions of $V$ as simple current extensions of $V_L\otimes W$.
Note that the automorphism groups $\Aut(W)$ for all $10$ VOAs $W$ in (d) have been determined in \cite{GO+102,Sh04,CLS,Lam18b,LamCFT,BLS}.
In particular, we prove the following (see Remark \ref{R:KOknown}, Proposition \ref{L:idh} and Tables \ref{T:2to6}, \ref{table3to8}, \ref{table5to6}, \ref{table7to5}, \ref{2c1}, \ref{2c1b}, \ref{OK4C}, \ref{OK6E}, \ref{OK8E}, \ref{OK6G} and \ref{tablelevel10}):

\begin{theorem}\label{T:main11} Let $V$ be a strongly regular holomorphic VOA of central charge $24$ with $V_1\neq0$.
Then $K(V)\subset \Inn(V)$.
Moreover, the group structures of $K(V)$ and $\Out(V)$ are given as in Table \ref{T:KOUT}.
Here the genus symbol $A,B,\dots,K$ in the table are used in \cite{Ho2}
\end{theorem}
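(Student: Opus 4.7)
The plan is to exploit item (d): the VOA $V$ is a simple current extension of $V_L\otimes W$, where $L=L_\g$ is the orbit lattice and $W$ is one of ten explicit fixed-point subVOAs whose automorphism groups are already known from \cite{GO+102,Sh04,CLS,Lam18b,LamCFT,BLS}. Since $V_L=\Com_V(W)$ and $W=\Com_V(\h)$, any automorphism of $V$ preserving $\h$ preserves both factors and thus restricts to a compatible pair in $\Aut(V_L)\times\Aut(W)$; conversely, any such compatible pair lifts to $\Aut(V)$ by the extension theory for simple currents. This reduces the determination of $\Aut(V)$ to a study of pairs, and is uniform within each of the ten genera from (d).

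I would establish $K(V)\subset\Inn(V)$ (the content of Proposition \ref{L:idh}) as follows. Given $\sigma\in K(V)$, since $\sigma$ fixes $(V_L)_1\supset \h$ pointwise, $\sigma|_{V_L}$ lies in the subgroup of $\Aut(V_L)$ fixing $(V_L)_1$, which consists of torus elements of the form $\exp(h_{(0)})|_{V_L}$ with $h\in \h$. The condition $\sigma|_{V_1}=\id$ forces $h$ to pair integrally with the roots of $V_1$, so $\exp(h_{(0)})$ is itself an inner automorphism of $V$ lying in $K(V)$. After multiplying $\sigma$ by the inverse of this inner automorphism, I may assume $\sigma|_{V_L}=\id$; the residual $\sigma|_W$ then preserves each summand of the simple current decomposition, acts by scalars by a Schur-type argument on the irreducible $W$-modules involved, and the constraints $\sigma|_{V_1}=\id$ combined with compatibility with the VOA product force all these scalars to be $1$. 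Hence $\sigma$ equals the inner automorphism subtracted off.

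Given $K(V)\subset\Inn(V)$, the group $K(V)$ is computed as the subgroup of the torus $\{\exp(h_{(0)})\in\Inn(V):h\in\h\}$ acting trivially on $V_1$. This subgroup sits inside the center of the simply connected cover of $\Inn(V_1)$ and is cut out by the requirement that $\exp(h_{(0)})$ acts trivially on every summand of the simple current decomposition of $V$. Using the affine levels recorded in Schellekens' list and the explicit gluing for each genus, this subgroup can be computed uniformly within each genus, producing the $K(V)$ column of Table~\ref{T:KOUT}.

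Finally, $\Out(V)$ embeds in $\OOut(V_1)$, so it is realized by a subgroup of diagram automorphisms of $V_1$. For each $\g$ in Schellekens' list I would determine which diagram symmetries lift to $\Aut(V)$ by identifying, within $O(L)\times\Aut(W)$, the pairs compatible with the gluing. I expect this compatibility check to be the main obstacle, particularly in genera admitting several candidate diagram symmetries: one must decide which combination of a lattice isometry in $O(L)$ and an automorphism of $W$ actually preserves the simple current data, and the answer depends subtly on the explicit module structure of $V$ over $V_L\otimes W$. The analysis is organized genus by genus using the explicit description of $L$ given earlier in the paper and the known automorphism groups $\Aut(W)$, and yields the $\Out(V)$ columns of the tables cited in the theorem.
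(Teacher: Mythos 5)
Your overall architecture matches the paper's: view $V$ as a simple current extension of $V_L\otimes W$, reduce $K(V)$ to torus elements, and compute $\Out(V)$ from compatible pairs in $O(L)\times\Aut(W)$. However, there is a genuine gap in your argument for $K(V)\subset\Inn(V)$. After you reduce to $\sigma|_{V_L}=\id$, you assert that $\sigma|_W$ ``acts by scalars by a Schur-type argument'' and that the scalars are forced to be $1$. Schur's lemma only applies once you already know $\sigma|_W=\id$ on the VOA $W$ (so that $\sigma$ is a $V_L\otimes W$-module endomorphism of each summand); what you actually know at that stage is only that $\sigma|_W\in\Aut(W)$ fixes the isomorphism class of each irreducible $W$-module $\varphi(\lambda+L)$. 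For a general VOA this does \emph{not} force $\sigma|_W=\id$: the kernel $\Aut_0(W)=\Ker\mu_W$ can be large (e.g.\ all inner automorphisms of a lattice VOA act trivially on $\irr$, by Lemma \ref{L:innM}; and the Monster acts trivially on the unique irreducible module of $V^\natural$ yet $K(V^\natural)\not\subset\Inn(V^\natural)$). The constraint $\sigma|_{V_1}=\id$ gives you nothing further here, since $W_1=0$ and only lowest-weight spaces of a few summands contribute to $V_1$. The missing ingredient is precisely Theorem \ref{T:inj}: for each of the ten VOAs $W\cong V_{\Lambda_g}^{\hat g}$, the map $\mu_W:\Aut(W)\to O(\irr(W),q_W)$ is \emph{injective}. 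This is a nontrivial, case-specific fact imported from the determination of $\Aut(W)$ in the cited references, not something derivable by a soft representation-theoretic argument. With it, $\sigma|_W=\id$ follows, and then the Schur/character argument identifies the residual automorphism with an element of $S_\varphi^*\subset\Inn(V)$, as in the paper.

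For the second assertion, your plan for $K(V)$ is essentially Proposition \ref{L:KV2} and is fine in outline, but your plan for $\Out(V)$ stops exactly where the real work begins. The paper pins down $\Out(V)\cong \mu_L^{-1}(\overline{O}(L)\cap\varphi^*(\overline{\Aut}(W)))/W(V_1)$ (Proposition \ref{P:OV}) and then resolves the ``compatibility check'' you flag as the main obstacle by two concrete devices: (i) for most classes $\overline{\Aut}(W)$ is normal of index at most $4$ in $O(\irr(W),q_W)$, so the intersection is controlled by the index (Lemma \ref{L:Out}); and (ii) whether the $-1$-isometry lies in $\overline{\Aut}(W)$ (Lemma \ref{Lem:center}) decides the remaining index-$2$ ambiguity. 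Without these inputs (and the explicit classification of the orbit lattices $L_\g$ and their isometry groups), the table entries cannot be produced.
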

\begin{footnotesize}
\begin{longtable}[c]{|c|c|c|c|c||c|c|c|c|c|}
\caption{$K(V)$ and $\Out(V)$ for holomorphic VOAs $V$ of central charge $24$ with $V_1\neq0$} \label{T:KOUT}\\
\hline 
No.&Genus&$V_1$&$\Out(V)$& $K(V)$ &No.&Genus&$V_1$&$\Out(V)$& $K(V)$ \\ \hline 
\hline 

$1$&$A$&$U(1)^{24}$&$Co_0$&$\C^{24}$&
$48$&$B$&$C_{6,1}^2B_{4,1}$ &$\Z_2$&$\Z_2$\\

$15$&&$A_{1,1}^{24}$&$M_{24}$&$\Z_2^{12}$&
$50$&&$D_{9,2}A_{7,1}$ &$\Z_2$&$\Z_8$\\

$24$&&$A_{2,1}^{12}$&$\Z_2.M_{12}$&$\Z_3^{6}$&
$52$&&$C_{8,1}F_{4,1}^2$ &$\Z_2$&$1$\\

$30$&&$A_{3,1}^{8}$&$\Z_2.AGL_3(2)$&$\Z_4^{4}$&
$53$&&$E_{7,2}B_{5,1}F_{4,1}$ &$1$&$\Z_2$\\

$37$&&$A_{4,1}^{6}$&$\Z_2.\Sym_5$&$\Z_5^{3}$&
$56$&&$C_{10,1}B_{6,1}$ &$1$&$\Z_2$\\

$42$&&$D_{4,1}^{6}$&$\Z_3.\Sym_6$&$\Z_2^{6}$&
$62$&&$B_{8,1}E_{8,2}$&$1$&$\Z_2$\\
\cline{6-10}

$43$&&$A_{5,1}^4D_{4,1}$&$\Z_2.\Sym_4$&$\Z_2^3\times\Z_3^{2}$&
$6$&$C$&$A_{2,3}^{6}$  & $\Sym_6$&  $\Z_3$\\

$46$&&$A_{6,1}^{4}$&$\Z_2.\Alt_4$&$\Z_7^{2}$&
$17$&&${A_{5,3}}{D_{4,3}}A_{1,1}^{3}$ & $\Sym_3$& $\Z_2^3$\\

$49$&&$A_{7,1}^{2}D_{5,1}^2$&$\Z_2.\Z_2^2$&$\Z_4\times\Z_8$&
$27$&&$A_{8,3}A_{2,1}^2$    & $\Z_2$&$\Z_3^2$\\

$51$&&$A_{8,1}^{3}$&$\Z_2.\Sym_3$&$\Z_9\times\Z_3$&
$32$&&$ E_{6,3}{G_{2,1}}^3$&  $\Sym_3$& $1$ \\

$54$&&$D_{6,1}^{4}$&$\Sym_4$&$\Z_2^{4}$&
$34$&&$  {D_{7,3}}{A_{3,1}}{G_{2,1}}$&  $1$& $\Z_4$\\

$55$&&$A_{9,1}^2D_{6,1}$&$\Z_2.\Z_2$&$\Z_2^{2}\times\Z_5$&
$45$&&$E_{7,3}A_{5,1}$ & $1$ & $\Z_6$ \\
\cline{6-10}

$58$&&$E_{6,1}^{4}$&$\Z_2.\Sym_4$&$\Z_3^{2}$&
$2$&$D$ &$A_{1,4}^{12}$ & $M_{12}$&   $\Z_2$
\\ 

$59$&&$A_{11,1}D_{7,1}E_{6,1}$&$\Z_2$&$\Z_3\times \Z_4$&
$12$ &&$B_{2,2}^6$ & $\Sym_5$& $\Z_2$\\

$60$&&$A_{12,1}^{2}$&$\Z_2.\Z_2$&$\Z_{13}$&
$23$& &$B^4_{3,2}$ &$\Alt_4$&   $\Z_2$
\\

$61$&&$D_{8,1}^{3}$&$\Sym_3$&$\Z_2^{3}$&
$29$& &$B_{4,2}^3$ &  $\Sym_3$& $\Z_2$\\

$63$&&$A_{15,1}D_{9,1}$&$\Z_2$&$\Z_8$&
$41$ &&$B^2_{6,2}$ & $\Z_2$& $\Z_2$\\ 

$64$&&$D_{10,1}E_{7,1}^2$&$\Z_2$&$\Z_2^{2}$&
$57$ &&$B_{12,2}$ & $1$ &  $\Z_2$ 
\\ 

$65$&&$A_{17,1}E_{7,1}$&$\Z_2$&$\Z_2\times\Z_3$&
$13$& &$D_{4,4}A^4_{2,2}$ & $2.\Sym_4$&  $\Z_3^2$\\ 

$66$&&$D_{12,1}^{2}$&$\Z_2$&$\Z_2^2$&
$22$ &&$C_{4,2}A^2_{4,2}$& $\Z_4$& $\Z_5$\\

$67$&&$A_{24}$&$\Z_2$&$\Z_5$&
$36$ &&$ A_{8,2}F_{4,2}$ & $\Z_2$  &   $\Z_3$ \\ 
\cline{6-10}

$68$&&$E_{8,1}^{3}$&$\Sym_3$&$1$&
$7$&$E$ &$A_{3,4}^3A_{1,2}$  &$\Z_2^2{:}\Sym_3$& $\Z_2$ \\ 

$69$&&$D_{16,1}E_{8,1}$&$1$&$\Z_2$&
$18$ &&$A_{7,4}A_{1,1}^3$  &$\Z_2$  & $\Z_2^3$\\

$70$&&$D_{24,1}$&$1$&$\Z_2$&
$19$ &&$D_{5,4}C_{3,2}A_{1,1}^2$  &$\Z_2$& $\Z_2^3$\\ 
\cline{1-5}

$5$&$B$&$A_{1,2}^{16}$&$\AGL_4(2)$&  $\Z_2^5$& 
$28$ &&$E_{6,4}A_{2,1}B_{2,1}$ &  $1$& $\Z_6$\\

$16$&&$A_{3,2}^4A_{1,1}^4$& $W(D_4)$& $\Z_2^3\times\Z_4$ &
$35$ &&$C_{7,2}A_{3,1}$  & $1$ & $\Z_2^2$\\ \cline{6-10} 

$25$&&$D_{4,2}^2C_{2,1}^4$&$\Z_2\times\Sym_4$&$\Z_2^3$ &
$9$&$F$&$ A_{4,5}^2$ & $\Z_2^2$ & $1$ \\

$26$&&$A_{5,2}^2C_{2,1}A_{2,1}^2$ &$\Dih_8$  &$\Z_3\times\Z_6$& 
$20$&&$D_{6,5}A_{1,1}^2$ & $1$& $\Z_2^2$\\  \cline{6-10}

$31$&&$D_{5,2}^2A_{3,1}^2$ &$\Dih_8$&$\Z_4^2$&
$8$ &$G$& $A_{5,6}B_{2,3}A_{1,2}$  
&$\Z_2$& $\Z_2$\\  

$33$&&$A_{7,2}C_{3,1}^2A_{3,1}$&$\Z_2^2$ &$\Z_2\times \Z_4$&
$21$& & $C_{5,3}G_{2,2}A_{1,1}$ & $1$ & $\Z_2$
\\  \cline{6-10}
 
$38$&&$C_{4,1}^4$ &$\Sym_4$&$\Z_2$ &
$11$&$H$&$ A_{6,7}$ & $1$ & $1$\\  \cline{6-10}

$39$&&$D_{6,2}C_{4,1}B_{3,1}^2$& $\Z_2$&$\Z_2^2$&
$10$&$I$ & $D_{5,8}A_{1,2}$   & $1$ &  $\Z_2$
\\   \cline{6-10}

$40$&&$A_{9,2}A_{4,1}B_{3,1}$&$\Z_2$ &$\Z_{10}$&
$3$&$J$ &$D_{4,12}A_{2,6}$   & $\Sym_3$ & $1$\\

$44$&&$E_{6,2}C_{5,1}A_{5,1}$
&$\Z_2$&$\Z_6$&
$14$& &$F_{4,6}A_{2,2}$   & $1$ & $\Z_3$
\\\cline{6-10}

$47$&&$D_{8,2}B_{4,1}^2$&$\Z_2$&$\Z_2^2$ &
$4$&$K$ &$C_{4,10}$  & $1$ &$1$\\ \hline
\end{longtable}
\end{footnotesize}

\begin{remark}
The former assertion $K(V)\subset\Inn(V)$ of Theorem \ref{T:main11} is proved in Proposition \ref{L:idh} by using the fact that for $W$ in (d), $\Aut(W)$ acts faithfully on the set of isomorphism classes of irreducible $W$-modules (see Theorem \ref{T:inj}).
\end{remark}

\begin{remark}
We do \emph{not} describe the embedding of $\Out(V)$ into $\OOut(V_1)$.
However, in Appendix A, we describe the subgroup $\Out_1(V)$ of $\Out(V)$ which preserves every simple ideal of $V_1$, and the quotient $\Out_2(V):=\Out(V)/\Out_1(V)$, which is a permutation group on the set of simple ideals of $V_1$.

\end{remark}

By using $O(L)$ and $\Aut(W)$, we also prove the following (see Propositions \ref{P:uni2-8}, \ref{P:uni5B}, \ref{P:uni2Ca}, \ref{P:uni2Cb}, \ref{P:uni6G} and \ref{P:uni10F}).
In particular, we confirm \cite[Conjecture 4.8]{Ho2}.

\begin{theorem}\label{T:main13}
Let $g\in O(\Lambda)$ in one of the $10$ conjugacy classes in (d).
Set $W=V_{\Lambda_g}^{\hat{g}}$.
Let $L$ be an even lattice such that 
there exists a simple current extension of $V_L\otimes W$ which forms a holomorphic VOA $V$ of central charge $24$; in addition, $V_L=\Com_V(W)$ and $W=\Com_V(V_L)$.
Then, we have the following results.
\begin{enumerate}[{\rm (1)}]
\item  Assume  that the conjugacy class of $g$ is $2A,3B,4C,5B,6F,7B,8E$ or $10F$.
Then, there exists a unique holomorphic VOA of central charge $24$ obtained as an inequivalent simple current extension of $V_{L}\otimes W$, up to isomorphism, for each possible $L$.

\item Assume that the conjugacy class of $g$ is $2C$.
Then $L\cong \sqrt2D_{12}$ or $\sqrt2E_8\sqrt2D_4$.
In addition, there exist exactly $6$ (resp. $3$) semisimple Lie algebras in Schellekens' list such that the associated orbit lattices are isometric to $\sqrt2D_{12}$ (resp. $\sqrt2E_8\sqrt2D_4$), and there exist exactly $6$ (resp. $3$) holomorphic VOAs of central charge $24$ obtained as inequivalent simple current extensions of $V_{\sqrt2D_{12}}\otimes W$ (resp. $V_{\sqrt2E_8\sqrt2D_4}\otimes W$), up to isomorphism.
\item Assume that the conjugacy class of $g$ is $6G$.
Then $L\cong \sqrt{6}D_4\sqrt2A_2$, and there exist exactly $2$ semisimple Lie algebras in Schellekens' list such that the associated orbit lattices are isometric to $\sqrt{6}D_4\sqrt2A_2$.
In addition, there exist exactly $2$ holomorphic VOAs of central charge $24$ obtained as inequivalent simple current extensions of $V_{\sqrt{6}D_4\sqrt2A_2}\otimes W$, up to isomorphism.
\end{enumerate}
\end{theorem}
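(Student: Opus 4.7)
The plan is to translate the uniqueness question into an orbit counting problem on the fusion group of $V_L\otimes W$ and then carry out the count case by case using the descriptions of $O(L)$ and $\Aut(W)$ already available.

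Let $\mathcal{D}(U)$ denote the group of isomorphism classes of simple currents of a rational VOA $U$, equipped with the quadratic form $q_U$ given by conformal weights modulo $\mathbb{Z}$. Since both $V_L$ and $W$ have group-like fusion by \cite{Do,Lam20}, we have $\mathcal{D}(V_L\otimes W)\cong\mathcal{D}(V_L)\oplus\mathcal{D}(W)$. A holomorphic simple current extension of $V_L\otimes W$ corresponds to a maximal totally isotropic subgroup $I$ of $\mathcal{D}(V_L\otimes W)$, and the commutant hypotheses $V_L=\Com_V(W)$, $W=\Com_V(V_L)$ force the projections of $I$ to $\mathcal{D}(V_L)$ and $\mathcal{D}(W)$ to be group isomorphisms, so $I$ is the graph of an isometry $\varphi\colon\mathcal{D}(V_L)\simto\mathcal{D}(W)$ (with the appropriate sign convention on $q_W$). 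Two such extensions are isomorphic as VOAs if and only if the graphs lie in the same orbit of $O(L)\times\Aut(W)$ acting through its natural image in $O(\mathcal{D}(V_L))\times O(\mathcal{D}(W))$. Hence the number of inequivalent extensions equals the number of double cosets
\[\mathrm{Im}(O(L))\,\big\backslash\,\mathrm{Isom}\bigl(\mathcal{D}(V_L),\mathcal{D}(W)\bigr)\,\big/\,\mathrm{Im}(\Aut(W)),\]
where $\mathrm{Isom}$ denotes the set of isometries of quadratic forms and the images are taken in the respective orthogonal groups of the discriminants.

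For part $(1)$, I would verify in each of the eight classes $2A,3B,4C,5B,6F,7B,8E,10F$ that $\mathrm{Im}(\Aut(W))$ acts transitively on $\mathrm{Isom}(\mathcal{D}(V_L),\mathcal{D}(W))$ for every allowed $L$. The full automorphism groups $\Aut(W)$ are known from the references cited in (d), and in these eight classes the discriminant form $\mathcal{D}(W)$ is small enough and $\mathrm{Im}(\Aut(W))$ large enough that transitivity follows readily; in many of these cases $\mathrm{Im}(\Aut(W))$ is already the entire orthogonal group of $(\mathcal{D}(W),q_W)$.

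For parts $(2)$ and $(3)$, one first identifies the possible lattices $L$. Any such $L$ must be the orbit lattice attached to some semisimple Lie algebra in Schellekens' list, so inspection of the list of orbit lattices produced earlier in the paper shows that only $\sqrt 2 D_{12}$ and $\sqrt 2 E_8\sqrt 2 D_4$ arise for class $2C$ (with $6$ and $3$ Lie algebras realized, respectively), and only $\sqrt 6 D_4\sqrt 2 A_2$ arises for class $6G$ (with $2$ Lie algebras). It then remains to enumerate double cosets directly, using the explicit descriptions of $O(L)$ and $\mathrm{Im}(\Aut(W))$. The main obstacle is the $2C$ case, where $\mathcal{D}(W)$ has substantial rank and $\mathrm{Im}(\Aut(W))$ is nontrivial to pin down: one must verify the exact orbit counts $6$ and $3$ on the nose, which together with the existence statement (b) forces the $9$ constructions to be pairwise non-isomorphic and exhaustive, thereby confirming \cite[Conjecture 4.8]{Ho2}. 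The analogous but smaller analysis handles class $6G$.
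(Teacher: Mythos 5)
Your reduction of the problem to counting double cosets $\overline{O}(L)\backslash \mathrm{Isom}\bigl((\mathcal{D}(L),q_L),(\irr(W),-q_W)\bigr)/\overline{\Aut}(W)$ is exactly the paper's framework (Proposition \ref{P:Hoext}, following H\"ohn), so the setup is sound. The gap is in how you propose to show the count is $1$ in part (1). You say you would check that $\mathrm{Im}(\Aut(W))$ acts transitively on the set of isometries; since that set is a torsor under $O(\irr(W),q_W)$, such transitivity is equivalent to $\overline{\Aut}(W)=O(\irr(W),q_W)$, which holds only for the classes $2A$ and $6E$. By Table \ref{Table:main} the index of $\overline{\Aut}(W)$ is $2$ for $3B,4C,5B,7B,8E$ and $3$ for $10F$, so your criterion fails in six of the eight classes, and the claim that "transitivity follows readily" is false there. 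What is actually needed is the joint condition $\overline{O}(L)\cdot\varphi^*(\overline{\Aut}(W))=O(\mathcal{D}(L),q_L)$, and establishing it is not routine: for $3B,4C,7B,8E$ the paper uses Lemma \ref{Lem:center}(2), the fact that $\overline{\Aut}(W)$ does \emph{not} contain the $-1$-isometry of $\irr(W)$ (proved via a delicate analysis of $O_0(\Lambda_g)$, partly by machine), combined with $-1\in\overline{O}(L)$; for $5B$ this trick is unavailable because $\Aut(W)$ \emph{does} contain $-1$ there (Remark \ref{R:5Bb}), and one instead uses that $\overline{O}(L)$ is a maximal subgroup of the orthogonal group (Lemma \ref{Lem:5b}); for $10F$ one uses that $W(V_1)\cong W(C_4)$ already has index $3$ in $O(L)$ (Proposition \ref{P:uni10F}). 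None of these inputs appear in your plan.

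For parts (2) and (3), identifying the possible $L$ by inspecting the orbit lattices attached to Schellekens' list is acceptable under the standing assumptions (the paper instead classifies directly all even rank-$12$ lattices with the prescribed discriminant form via odd unimodular lattices, Lemma \ref{L:2C}, which is cleaner and independent of the list). However, the actual counts $6$, $3$ and $2$ are the substance of the theorem and are left unexecuted in your proposal. For $2C$ they require the realization of $\Sym_{12}$ inside $GO_{10}^-(2)$ on the space of even subsets of a $12$-set, the ATLAS data on intersections of two conjugates of the maximal subgroup $\Sym_{12}$ (six classes, Lemma \ref{2C1}), and, for $L\cong\sqrt2E_8\sqrt2D_4$, an orbit count of $\Sym_3\times GO_8^+(2)$ on conjugates of $\Sym_{12}$ carried out via non-singular minus-type $2$-spaces (Lemma \ref{2C2}); for $6G$ one needs the $-1$-isometry exclusion again, together with the lower bound of $2$ coming from the two realized Lie algebras, to squeeze the count between $2$ and $4$ down to exactly $2$. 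So the proposal has the correct skeleton but omits the specific group-theoretic facts on which every one of the counts actually rests.
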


\begin{remark} The assumption on $L$ and $W$ in Theorem \ref{T:main13} 
is equivalent to the conditions that $(\mathcal{D}(L),q_L)\cong (\irr(W),-q_W)$ as quadratic spaces and the sum of the rank of $L$ and the central charge of $W$ is $24$ (see Section \ref{S:31}).
\end{remark}

It follows that a semisimple Lie algebra $\g$ in Schellekens' list of rank less than $24$ determines a unique equivalence class of 
a simple current extension of $V_L\otimes W$ which forms 
a holomorphic VOA $V$ of central charge $24$ with $V_1\cong\g$.
Hence Theorem \ref{T:main13}
and the characterization of Niemeier lattice VOAs in \cite{DM04b} give another proof of (c) (see \cite[Section 4.3]{Ho2}).

\medskip

Let us explain the main ideas for determining the groups $K(V)$ and $\Out(V)$ for holomorphic VOAs $V$ of central charge $24$ with $0<\rank V_1<24$.
As we mentioned above, $V$ is a simple current extension of $V_{L}\otimes W$.
Recall from \cite{Do} (resp. \cite{Lam20}) that all irreducible $V_L$-modules (resp. irreducible $W$-modules) are simple current modules.
Hence the set $\irr(V_L)$ (resp. $\irr(W)$) of their isomorphism classes has group-like fusion, that is, it forms an abelian group under the fusion product.
In addition,  the map $q_{V_L}$ (resp. $q_W$) from $\irr(V_L)$ (resp. $\irr(W)$) to $\Q/\Z$ defined by conformal weights modulo $\Z$ is a quadratic form (\cite{EMS}).
It is well-known that $(\irr(V_L),q_{V_L})$ is isometric to the quadratic space $(\mathcal{D}(L),q_L)$ on the discriminant group $\mathcal{D}(L)=L^*/L$ with the quadratic form $q_L(v+L)=\langle v|v\rangle/2+\Z$.
Since $V$ is holomorphic, there exists a bijection $\varphi$ from $\mathcal{D}(L)$ to $\irr(W)$ such that 
for any ${\lambda+L}\in\mathcal{D}(L)$, $V_{\lambda+L}\otimes\varphi(\lambda+L)$ appears as a $V_L\otimes W$-submodule of $V$.
Note that $\varphi$ is an isometry from $(\mathcal{D}(L),q_L)$ to $(\irr(W),-q_W)$.
Then $S_\varphi=\{(V_{\lambda+L},\varphi(\lambda+L))\mid \lambda+L\in\mathcal{D}(L)\}$ is a maximal totally isotropic subspace of $(\irr(V_L),q_{V_L})\oplus(\irr(W),q_W)$.

Since the group $K(V)$ acts trivially on the (fixed) Cartan subalgebra $\h$, it preserves $V_L\otimes W$ and $S_\varphi$.
In addition, the restriction of $K(V)$ to $V_L$ is a subgroup of the inner automorphisms associated with $\h$ and preserves every element in $\irr(V_L)$.  
Hence the restriction of $K(V)$ to $W$ also   preserves every element in $\irr(W)$ via the isometry $\varphi$; since the action of $\Aut(W)$ on $\irr(W)$ is faithful, the restriction of $K(V)$ to $W$ must be the identity.
Note that the subgroup which acts trivially on $V_L\otimes W$ is the dual $S_\varphi^*$ of $S_\varphi$, which is contained in $\Inn(V)$.
Hence $K(V)$ is contained in $\Inn(V)$.
In addition, we describe $K(V)$ in terms of $L$ and the root lattice of $V_1$ (Proposition \ref{L:KV2}).

By the transitivity of $\Inn(V)$ on Cartan subalgebras of $V_1$, $\Out(V)$ can be obtained as the quotient of the stabilizer ${\rm Stab}_{\Aut(V)}(\h)$ of the (fixed) Cartan subalgebra $\h$ in $\Aut(V)$ by the normal subgroup ${\rm Stab}_{\Inn(V)}(\h)={\rm Stab}_{\Aut(V)}(\h)\cap\Inn(V)$.
Since ${\rm Stab}_{\Aut(V)}(\h)$ preserves $V_L\otimes W$ and normalizes $S_\varphi^*$, the restriction of ${\rm Stab}_{\Aut(V)}(\h)$ to $V_L\otimes W$ is ${\rm Stab}_{\Aut(V_L\otimes W)}(\h)\cap{\rm Stab}_{\Aut(V_L\otimes W)}(S_\varphi)$.
By using $\Aut(V_L)$ and $S_\varphi$, we see that 
it acts on $(\mathcal{D}(L),q_L)$ as $\overline{O}(L)\cap \varphi^*(\overline{\Aut}(W))$,  
where $\mu_L:O(L)\to O(\mathcal{D}(L),q_L)$ (resp. $\mu_W:\Aut(W)\to O(\irr(W),q_W)$) is the canonical group homomorphism, $\overline{O}(L)$ (resp. $\overline{\Aut}(W)$) is the image of $\mu_L$ (resp. $\mu_W$) and $\varphi^*(\overline{\Aut}(W))=\varphi^{-1}\overline{\Aut}(W)\varphi$.
Then ${\rm Stab}_{\Aut(V)}(\h)$ acts on $\h$ as $\mu_L^{-1}(\overline{O}(L)\cap \varphi^*(\overline{\Aut}(W)))$.
Clearly, ${\rm Stab}_{\Inn(V)}(\h)$ acts on $\h$ as the Weyl group $W(V_1)$ of $V_1$.
Thus $\Out(V)\cong \mu_L^{-1}(\overline{O}(L)\cap \varphi^*(\overline{\Aut}(W)))/W(V_1)$ (Proposition \ref{P:OV}).
For each $\g$ in Schellekens' list with $0<\rank\g<24$, we describe $L=L_\g$ and $O(L)$ explicitly.
In addition, by using the structures of the groups $\overline{O}(L)$, $\varphi^*(\overline{\Aut}(W))(\cong\Aut(W))$ and $O(\mathcal{D}(L),q_L)$, we determine $\overline{O}(L)\cap \varphi^*(\overline{\Aut}(W))$, which gives the shape of $\Out(V)$.
In our calculations, we use the fact that except for the case $2C$, the index $|O(\irr(W),q_W):\overline{\Aut}(W)|$ is at most $4$, which implies that $\Out(V)$ has small index in $O(L)/W(V_1)$ (see \eqref{Eq:OV} and Lemma \ref{L:Out}). 

Our strategy for the uniqueness has been discussed in \cite{Ho2} (see Proposition \ref{P:Hoext}); we compute the number $|\varphi^*(\overline{\Aut}(W))\setminus O(\mathcal{D}(L),q_L)/\overline{O}(L)|$ of double cosets for given $W$ and $L$, which gives the number of holomorphic VOAs obtained as inequivalent simple current extensions of $V_L\otimes W$.
In fact, we verify that this number is $1$ if the conjugacy class is neither $2C$ nor $6G$, and compute the numbers for $2C$ and $6G$ by using the group structures of $O(L)$, $\Aut(W)$ and 
 $O(\mathcal{D}(L),q_L)$.
Then we obtain Theorem \ref{T:main13}.

The organization of the article is as follows. In Section 2, we review some basic notions for integral lattices and vertex operator algebras. In Section \ref{S:4}, we view holomorphic VOAs as simple current extensions of $V_L\otimes W$ and study some stabilizers. We will also describe the groups $\Out(V)$ and $K(V)$.
In Section \ref{sec:4}, we discuss the number of inequivalent simple current extensions of $V_L\otimes W$ that form holomorphic VOAs. 
In Section \ref{S:5}, for each $W$ mentioned in (d) and the semisimple Lie algebra $\g$ in Schellekens' list with $0<\rank \g<24$, we describe the orbit lattice $L=L_\g$ and determine the groups $K(V)$ and $\Out(V)$ explicitly.
In Appendix A, we describe the subgroup $\Out_1(V)$ of $\Out(V)$ and the quotient $\Out_2(V):=\Out(V)/\Out_1(V)$.

Some calculations on lattices and finite groups are done by MAGMA (\cite{MAGMA}).

\section*{Acknowledgments}
The authors would like to thank the reviewers for giving them careful suggestions and comments.
They also thank Brandon Rayhaun for useful discussion.

\subsection*{Conflict of interest statement} 
The authors have no competing interests to declare that are relevant to the content of this article.

\subsection*{Notations}

\begin{center}
	\begin{small}
		\begin{longtable}{ll} 
				    $2^{1+2n}_+$&
				       an extra special $2$-group of order $2^{1+2n}$ of plus type.
				        \\
				    $A.B$& a group $G$ that contains a normal subgroup $A$ with $G/A\cong B$.\\
		    $\overline{\Aut}(T)$  &  the subgroup of $O(\irr(T),q_T)$ induced by $\Aut(T)$, i.e, 
            $\overline{\Aut}(T)=\Imm\mu_{T}$.\\
            $\Aut_0(T)$ & the subgroup of $\Aut(T)$ which acts trivially on $\irr(T)$. \\
            $\Com_T(X)$& the commutant of a subset $X$ in a VOA $T$.\\
            $\mathcal{D}(H)$ & the discriminant group of an even lattice $H$, i.e.,  $\mathcal{D}(H)=H^*/H$. \\
            $\Inn(T)$ & the inner automorphism group of a VOA $T$ of CFT-type, i.e., \\
            			&the subgroup generated by  $\{\exp(a_{(0)})\mid a\in T_1\}$.\\
			$\irr(T)$ & the set of the isomorphism classes of irreducible modules over a VOA $T$.\\
						$K(T)$ &  the subgroup of $\Aut(T)$ which acts trivially on $T_1$ for a VOA $T$ of CFT-type.\\
			$H^g$& the fixed-point sublattice of a lattice $H$ by an isometry $g$.\\
			$H_g$& the coinvariant lattice of $g\in O(H)$, i.e., 
			$H_g = \{ x\in H \mid \langle x| H^g\rangle =0\}$. \\
			$L=L_\g$ & the orbit lattice associated with $\g$, i.e., $V_{L_\g}\cong \Com_V(\Com_V(\mathfrak{h}))$, where              
            $V$ is \\ &a holomorphic 
             VOA of $c=24$ with $V_1\cong\g$ and $\mathfrak{h}$ is a Cartan subalgebra of $\g$.\\
			$\mu_H$ &  
			the group homomorphism $\mu_H:O(H)\to O(\mathcal{D}(H),q_H)$ for an even lattice $H$.
				\\
			$\mu_T$ & 
				the group homomorphism $\mu_T:\Aut(T)\to O(\irr(T),q_T)$ for certain VOA $T$.
				\\
			$O(H)$  & the isometry group of a lattice $H$. \\
			$O( X,q)$ & the isometry group of a quadratic space $(X, q)$. \\
			$\OOut(\g)$ & $\OOut(\g)=\Aut(\g)/\Inn(\g)$, the group of outer automorphisms of a Lie algebra $\g$.  \\ 
			$\Out(T)$ & $\Out(T):=\Aut(T)/\Inn(T)$ for a VOA $T$ of CFT-type.\\
			$\overline{O}(H)$ & the subgroup of $O(\mathcal{D}(H),q_H)$ induced by $O(H)$, 
			i.e., $\overline{O}(H)=\Imm\mu_H$ \\
		    $O_0(H)$ &  the subgroup of $O(H)$ which acts trivially on $\mathcal{D}(H)$ for an even lattice $H$. \\ 
		    $p^n$& 
		     an elementary abelian $p$-group of order $p^n$.
		        \\
		    $p^{n+m}$& 
		       a $p$-group $G$ that contains  a normal subgroup $p^n$ with $G/p^n\cong p^m$.
		       \\
   			$P_\g$ & 
   			$P_\g=\bigoplus_{i=1}^s\frac{\sqrt{\ell}}{\sqrt{k_i}}Q^i\subset U_\g=\sqrt{\ell}L_\g^*$, where 
			$\g= \bigoplus_{i=1}^s \g_{i}$ is the direct sum of simple
					\\&
			 ideals,
			$k_i$ is the level of $\g_{i}$, $Q^i$ is the root lattice of $\g_{i}$ and $\ell$ is the level of $L_\g$.
   			\\
		    $q_H$ & the quadratic form on $\mathcal{D}(H)$, $q_H(v+H)=\langle v|v\rangle/2+\Z$.\\
			$q_T$ &  the quadratic form on $\irr(T)$ defined as conformal weights modulo $\Z$.\\ 
			$Q_\g$ &   $Q_\g=\bigoplus_{i=1}^s\sqrt{k_i}Q^i_{long}\subset L_\g$, where 
			$\g= \bigoplus_{i=1}^s \g_{i}$ is the direct sum of simple ideals,\\
			&$k_i$ is the level of $\g_{i}$ and $Q^i_{long}$ is the lattice spanned by long roots of $\g_{i}$. \\
			$R(H)$& the root system of a lattice $H$ (see Section \ref{S:lattice}).\\
			$\rho(M)$ & the conformal weight of an irreducible module $M$ over a VOA. \\
			$\Sym_n$& the symmetric group of degree $n$.\\
			${\rm Stab}_G(X)$ & the stabilizer of $X$ in a group $G$.\\
			$U=U_\g$  &    $U=\sqrt{\ell} L^*$, where $\ell$ is the level of the orbit lattice $L=L_\g$. \\

			$T^{\sigma}$& the set of fixed-points of an automorphism $\sigma$ of a VOA $T$.\\
			$W(R)$, $W(\g)$ & the Weyl group of a root system $R$ or a semisimple Lie algebra $\g$. \\
			$X_{n,k}$ & (the type of) a simple Lie algebra whose type is $X_n$ and level is $k$.\\
       \end{longtable}
	\end{small}
\end{center}

\section{Preliminary}

In this section, we review some basic terminology and notation for integral lattices and vertex operator algebras. 

\subsection{Lattices}\label{S:lattice}

By a \textit{lattice}, we mean a  free abelian group of finite rank with  a rational
valued, positive-definite symmetric bilinear form $\langle\ |\ \rangle$. 
A lattice $H$ is \textit{integral} if $\langle H| H\rangle \subset \Z$ and it is \textit{even} if 
$\langle x| x\rangle\in 2\Z$ for any $x\in H$. 
Note that an even lattice is integral.
Let $H^*$ denote the dual lattice of a lattice $H$, that is, $H^*= \{v\in 
\Q\otimes_\Z H \mid \langle v| H\rangle \subset \Z\}.$ 
If $H$ is integral, then $H\subset H^*$; $\mathcal{D}(H)$ denotes the discriminant group
$H^*/H$. 

An \textit{isometry} of a lattice $H$ is a linear isomorphism $g\in GL(\Q\otimes_\Z H)$ such that 
$g(H)=H$ and $\langle gx| gy\rangle=\langle x|y\rangle$ for all $x,y\in H$. 
Let $O(H)$ denote the group of all isometries of $H$, which we call the \emph{isometry group} of $H$.    
Note that $O(H)=O(H^*)$.

Let $H$ be an even lattice.
Let $q_H:\mathcal{D}(H)\to\Q/\Z$ denote the quadratic form on $\mathcal{D}(H)$ defined by $q_H(v+H)=\langle v|v\rangle/2+\Z$ for $v+H\in \mathcal{D}(H)$, and let $$\mu_H:O(H)\to O(\mathcal{D}(H),q_H)$$ denote the canonical group homomorphism, where $$O(\mathcal{D}(H),q_H)=\{g\in\Aut(\mathcal{D}(H))\mid q_H(gx)=q_H(x)\ \text{for all}\ x\in\mathcal{D}(H)\}.$$
The group $\overline{O}(H)$ denotes the subgroup of $O(\mathcal{D}(H),q_H)$ induced by $O(H)$, and $O_0(H)$ denotes the subgroup of $O(H)$ which acts trivially on $\mathcal{D}(H)$, that is, $$\overline{O}(H)=\Imm\mu_H,\qquad O_0(H)=\Ker\mu_H.$$

\begin{definition}\label{Def:coinv}
Let $H$ be a lattice and $g\in O(H)$. 
Let $H^g$ denote the fixed-point sublattice of $g$, that is, $H^g=\{x\in H\mid gx=x\}.$  
The \textit{coinvariant lattice} of $g$ is defined to be 
\[
H_g = \{ x\in H \mid \langle x| y\rangle =0 \text{ for all } y\in H^g\}.
\]
Clearly, the restriction of $g$ to $H_g$ is fixed-point free on $H_g$.
\end{definition}

Next we recall the definition of a root system from \cite{Hu}.
A subset $\Phi$ of $\R^n$ is called a \emph{root system} in $\R^n$ if $\Phi$ satisfies (R1)--(R4) below: 
\begin{enumerate}[(R1)]
\item $|\Phi|<\infty$ and $\Phi$ spans $\R^n$; 
\item If $\alpha\in \Phi$, then $\Z\alpha\cap\Phi=\{\pm\alpha\}$; 
\item If $\alpha\in\Phi$, then the reflection $\sigma_\alpha:\beta\mapsto \beta-2(\langle \beta|\alpha\rangle/\langle\alpha|\alpha\rangle)\alpha$ leaves $\Phi$ invariant; 
\item If $\alpha,\beta\in\Phi$, then $\langle\beta|\alpha\rangle/\langle\alpha|\alpha\rangle\in\Z$. 
\end{enumerate}
The \emph{root lattice} $L_\Phi$ of $\Phi$ is the lattice spanned by roots.
If $\Phi$ is irreducible, of type $A_n$, $D_n$ or $E_n$ and $\langle\alpha|\alpha\rangle=2$ for all $\alpha\in\Phi$, then we often denote $L_\Phi$ just by $\Phi$.

Let $L$ be a positive-definite rational lattice.
An element $\alpha\in L$ is \emph{primitive} if $L/\Z\alpha$ has no torsion.
A primitive element $\alpha\in L$ is called a \emph{root} of $L$ if the reflection $\sigma_\alpha$ in the ambient space of $L$ is in $O(L)$.
The set $R(L)$ of roots is an abstract root system in the ambient space of the sublattice $L_{R(L)}$ of $L$ spanned by $R(L)$.
Hence the general theory of root system applies to $R(L)$ and $R(L)$ decomposes into irreducible components of type $A_n$, $B_n$, $C_n$, $D_n$ or $G_2$, $F_4$, $E_6$, $E_7$, $E_8$.

For $\ell\in \Z_{>0}$ and a lattice $H$, we denote $\sqrt{\ell} H= 
\{ \sqrt{\ell}x\mid x\in H\}$.
The \textit{level} of an even lattice $H$ is defined to be the smallest positive integer 
$\ell$ such that $\sqrt{\ell}H^*$ is again even. 
The following can be obtained from \cite[Propositions 2.1 and 2.2]{Sc06}.

\begin{lemma}\label{RL}
Let $H$ be an even lattice of level $\ell$.
\begin{enumerate}[{\rm (1)}]
\item  Let $\alpha$ be a root of $H$ with $\langle\alpha|\alpha\rangle=2k$.
Then $k\mid \ell$ and $\alpha\in H\cap k H^*$.
\item Assume that $\ell$ is prime.
Then  
\begin{equation*}
R(H) =\{ v\in H \mid \langle v|v\rangle=2\} \cup \{ v\in \ell H^* \mid \langle v|v\rangle=2\ell\}.\label{Eq:RL}
\end{equation*}
\end{enumerate}
\end{lemma}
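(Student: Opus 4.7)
For part (1), the plan is to unpack the two hypotheses directly. Since $\alpha$ is a root of $H$, by definition the reflection $\sigma_\alpha:\beta\mapsto\beta-\frac{\langle\beta|\alpha\rangle}{k}\alpha$ preserves $H$. Applied to an arbitrary $\beta\in H$, this forces $\frac{\langle\beta|\alpha\rangle}{k}\in\Z$ for all $\beta\in H$, which is the statement that $\alpha/k\in H^*$, i.e.\ $\alpha\in kH^*$. Since $\alpha\in H$ as well, we get $\alpha\in H\cap kH^*$. Now use the hypothesis that $H$ has level $\ell$: the lattice $\sqrt{\ell}H^*$ is even, so $\ell\langle v|v\rangle\in 2\Z$ for every $v\in H^*$. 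Taking $v=\alpha/k\in H^*$ gives $\ell\cdot\frac{2k}{k^2}=\frac{2\ell}{k}\in 2\Z$, hence $k\mid\ell$.

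For part (2), the forward inclusion follows immediately from part (1): if $\ell$ is prime and $\alpha\in R(H)$ has squared norm $2k$, then $k\mid\ell$ forces $k\in\{1,\ell\}$, so $\alpha$ is either a norm-$2$ vector of $H$ or lies in $\ell H^*$ with norm $2\ell$. For the reverse inclusion, I need to verify that every such vector is in fact a root, i.e.\ belongs to $H$, is primitive, and defines a reflection preserving $H$. The norm-$2$ case is standard: $v\in H$ is automatically primitive (since $\langle v|v\rangle=2=n^2\langle w|w\rangle$ with $\langle w|w\rangle\in 2\Z_{\geq 0}$ forces $n=1$), and $\sigma_v(\beta)=\beta-\langle\beta|v\rangle v$ lies in $H$ because $\langle\beta|v\rangle\in\Z$ for $\beta\in H$.

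The norm-$2\ell$ case requires three observations. First, $\ell H^*\subset H$: polarizing the condition $\ell\langle x|x\rangle\in 2\Z$ for $x\in H^*$ gives $\ell\langle x|y\rangle\in\Z$ for all $x,y\in H^*$, so $\ell x\in H^{**}=H$. Hence any $v$ in the second set automatically lies in $H$. Second, primitivity: if $v=nw$ with $w\in H$ and $n\geq 1$, then $\langle w|w\rangle=2\ell/n^2$ must be a positive even integer, forcing $n^2\mid\ell$; since $\ell$ is prime, $n=1$. Third, $\sigma_v$ preserves $H$: writing $v=\ell u$ with $u\in H^*$, for $\beta\in H$ we have $\sigma_v(\beta)=\beta-\frac{\langle\beta|v\rangle}{\ell}v=\beta-\langle\beta|u\rangle v$, and $\langle\beta|u\rangle\in\Z$ since $\beta\in H$ and $u\in H^*$, so $\sigma_v(\beta)\in H$.

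I expect the only non-formal step to be the inclusion $\ell H^*\subset H$, which is the bridge between the level condition (a statement only about the quadratic form $q_H$) and the actual lattice containment needed to assert that vectors in $\ell H^*$ belong to $H$. Once this is in hand, primitivity in the norm-$2\ell$ case is forced by the arithmetic constraint $n^2\mid\ell$ together with the primality of $\ell$, and the reflection preservation is a direct computation.
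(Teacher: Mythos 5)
Your proof is correct. The paper does not actually prove this lemma; it simply states that it ``can be obtained from [Sc06, Propositions 2.1 and 2.2],'' so there is no in-paper argument to compare against. Your direct verification is the standard one and is complete: part (1) reads off $\alpha\in kH^*$ from invariance of $H$ under $\sigma_\alpha$ and then feeds $\alpha/k$ into the evenness of $\sqrt{\ell}H^*$, and part (2) combines the divisibility $k\mid\ell$ with the key containment $\ell H^*\subset H$ (obtained by polarizing the level condition), which is indeed the only non-formal ingredient. One small point worth making explicit: in part (1), the condition $\sigma_\alpha(\beta)\in H$ gives $\frac{\langle\beta|\alpha\rangle}{k}\alpha\in H$, and to conclude $\frac{\langle\beta|\alpha\rangle}{k}\in\Z$ you must invoke the primitivity of $\alpha$ (which is built into the paper's definition of a root, since $\Q\alpha\cap H=\Z\alpha$ for primitive $\alpha$); you use primitivity carefully in part (2) but pass over it silently here.
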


For a root system $\Phi$, the \emph{Weyl group} $W(\Phi)$ is the subgroup of $O(L_\Phi)$ generated by reflections associated with elements in $\Phi$.
The following lemma is well-known:

\begin{lemma}\label{Lem:W}
There are isomorphisms of the Weyl groups of root systems and the isometry groups of the root lattices:
\begin{align*}
W(B_4)&\cong W(C_4)\cong W(D_4).2,\quad W(B_n)\cong W(C_n)\cong O(D_n),\quad (n\ge2, n\neq4),\\
W(F_4)&\cong O(D_4)\cong W(D_4).\Sym_3,\quad W(G_2)\cong O(A_2),
\end{align*}
where $D_2= A_1^2$ and $D_3= A_3$.
\end{lemma}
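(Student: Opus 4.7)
The plan is to realize each Weyl group and isometry group concretely as (a subgroup of) signed permutations or dihedral groups, compare orders, and then argue that the natural inclusions are equalities.

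First I would set up the standard descriptions. Realize $W(B_n) = W(C_n) = \{\pm1\}^n \rtimes \Sym_n$ (the full signed-permutation group of order $2^n n!$, which coincides for $B_n$ and $C_n$ because these root systems are dual), and $W(D_n) = \{\pm1\}^n_{\mathrm{even}} \rtimes \Sym_n$ of order $2^{n-1} n!$, the subgroup consisting of signed permutations with an even number of sign changes. Realize $D_n$ concretely in $\R^n$ as $\{x \in \Z^n : \sum x_i \in 2\Z\}$; then any isometry of $D_n$ permutes the root system $\{\pm e_i \pm e_j\}$, and a standard argument (using that a basis of $D_n$ is a $\Z$-basis and the $e_i$ are recovered from the roots) shows $O(D_n)$ acts on $\{\pm e_i\}$ as signed permutations. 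For $n \neq 4$ this already forces $O(D_n) \subseteq \{\pm1\}^n \rtimes \Sym_n = W(B_n)$; conversely, every signed permutation preserves $D_n$, giving $O(D_n) = W(B_n) = W(C_n)$. When $n=2,3$ one checks directly (using $D_2 = A_1^2$ and $D_3 \cong A_3$) that the isometry group is still $W(B_n)$.

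Next I would handle the exceptional cases $n=4$. For $W(B_4) = W(C_4)$, its order is $384 = 2 \cdot 192 = 2 \cdot |W(D_4)|$, and it visibly contains $W(D_4)$ with index $2$, so $W(B_4) = W(D_4).2$. For $O(D_4)$, the triality automorphism gives extra isometries: the outer automorphism group of $D_4$ is $\Sym_3$ (acting on the three tips of the Dynkin diagram), and each such outer automorphism is realized by an isometry of the root lattice $D_4$. Since $W(D_4)$ has order $192$, one gets $O(D_4) = W(D_4).\Sym_3$ of order $1152$. Comparing with $W(F_4)$ (of order $1152$, which can be computed from its Coxeter exponents) and noting that $W(F_4)$ preserves the sublattice $D_4 \subset F_4$ spanned by the long roots and acts faithfully on it, one concludes $W(F_4) \cong O(D_4)$. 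The step I expect to be slightly delicate is the construction/verification of the full $\Sym_3$ in $O(D_4)$; concretely, one may use the order-$3$ isometry cycling the three cosets of $D_4$ in $D_4^*$ together with the standard diagram-flip involution, and check that together with $W(D_4)$ they generate a group of order $1152$.

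Finally, for the rank-$2$ case: $A_2 \subset \R^2$ has root system of type $A_2$, and $W(A_2) = \Sym_3$ has order $6$, while $-1$ is an isometry of $A_2$ that does not lie in $W(A_2)$. Thus $O(A_2) = W(A_2) \times \langle -1 \rangle$ has order $12$. Comparing with $W(G_2)$ (the dihedral group of order $12$, the symmetry group of the regular hexagon formed by the six short roots of $G_2$, which coincides with the set of roots of $A_2$), we see $W(G_2)$ also acts on the same hexagon with order $12$, so $W(G_2) \cong O(A_2)$. Assembling these four cases finishes the proof.
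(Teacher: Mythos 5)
Your proposal is correct. The paper does not actually prove this lemma---it is stated as ``well-known'' with no argument---so there is nothing to compare against; your plan (signed-permutation models for $W(B_n)=W(C_n)$ and $W(D_n)$, the identification $O(D_n)=\Aut(R(D_n))=W(D_n).\Gamma$ with $\Gamma=\Sym_3$ for $n=4$ and $\Z_2$ otherwise, the faithful order-$1152$ action of $W(F_4)$ on the long-root lattice $D_4$, and $O(A_2)=W(A_2)\times\langle-1\rangle\cong\Dih_{12}\cong W(G_2)$) is the standard verification and all order counts check out.
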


\subsection{Vertex operator algebras}\label{preVOA}
Throughout this article, all VOAs are defined over the field $\C$ of complex numbers. 

A \emph{vertex operator algebra} (VOA) $(T,Y,\1,\omega)$ is a $\Z$-graded vector space $T=\bigoplus_{m\in\Z}T_m$ over the complex field $\C$ equipped with a linear map
$$Y(a,z)=\sum_{i\in\Z}a_{(i)}z^{-i-1}\in ({\rm End}\ (T))[[z,z^{-1}]],\quad a\in T,$$
the \emph{vacuum vector} $\1\in T_0$ and the \emph{conformal vector} $\omega\in T_2$
satisfying certain axioms (\cite{Bo,FLM}). 
Note that the operators $L(m)=\omega_{(m+1)}$, $m\in \Z$, satisfy the Virasoro relation:
$$[L{(m)},L{(n)}]=(m-n)L{(m+n)}+\frac{1}{12}(m^3-m)\delta_{m+n,0}c\ {\rm id}_T,$$
where $c\in\C$ is called the \emph{central charge} of $T$, and $L(0)$ acts by the multiplication of scalar $m$ on $T_m$.

A linear automorphism $\sigma$ of a VOA $T$ is called a (VOA) \emph{automorphism} of $T$ if $$ \sigma\omega=\omega\quad {\rm and}\quad \sigma Y(v,z)=Y(\sigma v,z)\sigma\quad \text{ for all } v\in T.$$
The group of all (VOA) automorphisms of $T$ is denoted by $\Aut (T)$. 

A \emph{vertex operator subalgebra} (or a \emph{subVOA}) of a VOA $T$ is a graded subspace of
$T$ which has a structure of a VOA such that the operations and its grading
agree with the restriction of those of $T$ and  they share the vacuum vector.
In addition, if they also share the conformal vector, then the subVOA is said to be \emph{full}.
For an automorphism $\sigma$ of a VOA $T$, let $T^\sigma$ denote the fixed-point set of $\sigma$, i.e.,
$$T^\sigma=\{v\in T\mid \sigma v=v\},$$
which is a full subVOA of $T$.
For a subset $X$ of a VOA $T$, the \emph{commutant} ${\rm Com}_T(X)$ of $X$ in $T$ is the subalgebra of $T$ which commutes with $X$ (\cite{FZ}).
Note that the double commutant $\Com_T(\Com_T(X))$ contains $X$.

Let $M=\bigoplus_{m\in\C} M_m$ be a module over a VOA $T$ (see \cite{FHL} for the definition).
If $M$ is irreducible, then there exists unique $\rho(M)\in\C$ such that $M=\bigoplus_{m\in\Z_{\geq 0}}M_{\rho(M)+m}$ and $M_{\rho(M)}\neq0$.
The number $\rho(M)$ is called the \emph{conformal weight} of $M$.
Let $\irr(T)$ denote the set of isomorphism classes of irreducible $T$-modules.
We often identify an irreducible module with its isomorphism class without confusion.

A VOA is said to be  \emph{rational} if the admissible module category is semisimple.
(See \cite{DLM2} for the definition of admissible modules.)
A rational VOA is said to be \emph{holomorphic} if it itself is the only irreducible module up to isomorphism.
A VOA $T$ is \emph{of CFT-type} if $T_0=\C\1$ (note that $T_i=0$ for all $i<0$ if $T_0=\C\1$), and is \emph{$C_2$-cofinite} if the co-dimension in $T$ of the subspace spanned by $\{u_{(-2)}v\mid u,v\in T\}$ is finite.
If $T$ is rational and $C_2$-cofinite, then $\rho(M)\in\Q$ for any $M\in\irr(T)$ (\cite[Theorem 1.1]{DLM2}).
A module over a VOA is said to be \emph{self-contragredient} if it is isomorphic to its contragredient module (see \cite{FHL}).
A VOA is said to be \emph{strongly regular} if it is rational, $C_2$-cofinite, self-contragredient and of CFT-type.
Note that a strongly regular VOA is simple.
A simple VOA $T$ of CFT-type is said to satisfy the \emph{positivity condition} if $\rho(M)\in \R_{>0}$ for all $M\in\irr(T)$ with $M\not\cong T$.

Let $T$ be a VOA and let $M$ be a $T$-module.
For $\sigma\in\Aut(T)$, let $M\circ \sigma$ denote the \emph{$\sigma$-conjugate module}, i.e., $M\circ \sigma=M$ as a vector 
space and its vertex operator is $Y_{M\circ \sigma}( u,z)= Y_M(\sigma u,z)$ for $u\in T$.
If $M$ is irreducible, then so is $M\circ \sigma$.
Hence $\Aut(T)$ acts on $\irr(T)$ as follows: for $\sigma\in \Aut(T)$, $M\mapsto M\circ \sigma$.
Note that $\rho(M)=\rho(M\circ \sigma)$ for $\sigma\in\Aut(T)$ and $M\in\irr(T)$.

Let $T$ be a strongly regular VOA.
Then the \emph{fusion products} $\boxtimes$ are defined on irreducible $T$-modules (\cite{HL}).
Note that the action of $\Aut(T)$ on $\irr(T)$ above also preserves the fusion products.
An irreducible $T$-module $M^1$ is called a \emph{simple current module} if for any irreducible $T$-module $M^2$, the fusion product $M^1\boxtimes M^2$ is also an irreducible $T$-module.
If all irreducible $T$-modules are simple current modules, then $\irr(T)$ has an abelian group structure under the fusion products; in this case, we say that $T$ has \emph{group-like fusion}. 

\begin{theorem}[{\cite[Theorem 3.4, Proposition 3.5]{EMS}}]\label{EMS1} Let $T$ be a strongly regular VOA.
Assume that $T$ has group-like fusion and satisfies the positivity condition.
Let $$q_T:\irr(T)\to\Q/\Z,\quad M\mapsto\rho(M)\mod \Z.$$
Then $q_T$ is a quadratic form on the abelian group $\irr(T)$ and the associated bilinear form is non-degenerate.
\end{theorem}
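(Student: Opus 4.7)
The plan is to recognize $q_T$ as the quadratic form arising from the ribbon twist on the modular tensor category $\mathcal{C}$ of $T$-modules, specialized to the \emph{pointed} case in which every simple object is invertible. Under the strong regularity hypothesis, Huang's theorem equips $\mathcal{C}$ with a modular tensor category structure whose ribbon twist acts on each simple $M$ by $\theta_M=e^{2\pi i\rho(M)}$; group-like fusion means every simple is invertible, so $\irr(T)$ is a finite abelian group under $\boxtimes$. Rationality of conformal weights (cited as \cite[Theorem 1.1]{DLM2}) ensures that $q_T(M):=\rho(M)+\Z$ is a well-defined map from $\irr(T)$ to $\Q/\Z$. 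Two things then need to be checked: that $q_T$ is a quadratic form, and that the induced bilinear form is non-degenerate.

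To check that $q_T$ is even, I would show $q_T(M^{-1})=q_T(M)$ using that for a simple current $M$ the fusion inverse $M^{-1}$ coincides with the contragredient $M'$ (both are simple of quantum dimension $1$, and $M\boxtimes M'$ contains $T$ as a direct summand), combined with the standard fact that contragredient modules have the same conformal weight. To show that $b_T(M,N):=q_T(M\boxtimes N)-q_T(M)-q_T(N)$ is $\Z$-bilinear, I would invoke the ribbon identity
\[
\theta_{M\boxtimes N}=\mu(M,N)\,\theta_M\theta_N,
\]
where $\mu(M,N)\in\C^\times$ is the scalar by which the double braiding $c_{N,M}\circ c_{M,N}$ acts on the invertible object $M\boxtimes N$. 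The two hexagon axioms for the braiding give
\[
\mu(M_1\boxtimes M_2,N)=\mu(M_1,N)\mu(M_2,N),\qquad \mu(M,N_1\boxtimes N_2)=\mu(M,N_1)\mu(M,N_2),
\]
so $\mu$ is a bicharacter on $\irr(T)$ valued in roots of unity (as $\irr(T)$ is finite), and $b_T=\tfrac{1}{2\pi i}\log\mu$ is a $\Z$-bilinear map $\irr(T)\times\irr(T)\to\Q/\Z$.

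Finally, the non-degeneracy of $b_T$ is the shadow of modularity. In Huang's MTC the normalized $S$-matrix is non-degenerate, and in the pointed case its entries are $S_{M,N}=\dim(\mathcal{C})^{-1/2}\,\mu(M,N)^{-1}=\dim(\mathcal{C})^{-1/2}\,e^{-2\pi i b_T(M,N)}$, so non-degeneracy of $S$ translates directly into non-degeneracy of $b_T$ as a bilinear form on the finite abelian group $\irr(T)$. The main obstacle is not any of these individual steps but rather the categorical input: identifying the VOA-theoretic conformal weight with the categorical twist and verifying the hexagon and modularity axioms, which rely on the full strongly regular hypothesis to invoke Huang's construction. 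Once that input is in hand, the statement reduces to the well-known classification of pointed modular tensor categories by non-degenerate pre-metric groups.
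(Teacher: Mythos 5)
Your argument is correct and is essentially the proof of the cited source \cite[Theorem 3.4, Proposition 3.5]{EMS} (the paper itself states this result without proof): Huang's modular tensor category structure on the module category of a strongly regular VOA, the identification of the twist $\theta_M=e^{2\pi i\rho(M)}$, the balancing and hexagon axioms to get $q_T(M^{-1})=q_T(M)$ and bilinearity of $b_T$, and invertibility of the $S$-matrix for non-degeneracy. The one point worth flagging is that the positivity condition is what forces the quantum dimensions of the simple currents to be $+1$ rather than $\pm1$, which you use implicitly when writing $S_{M,N}=\dim(\mathcal{C})^{-1/2}\mu(M,N)^{-1}$ without a factor $d_Md_N$ (though even with signs the non-degeneracy conclusion would survive, since multiplying rows and columns by $\pm1$ does not affect invertibility).
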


\begin{remark} We call a finite abelian group with a quadratic form \emph{a quadratic space}.
\end{remark}

Let $T$ be a strongly regular VOA satisfying the assumption of Theorem \ref{EMS1}.
Then, we obtain the canonical group homomorphism 
\begin{equation}
\mu_T:\Aut(T)\to O(\irr(T),q_T),\label{Eq:Orthogonal}
\end{equation}
where $O(\irr(T),q_T)=\{f\in\Aut(\irr(T))\mid q_T(W)=q_T(f(W))\ \text{for all}\ W\in\irr(T)\}$ is the orthogonal group of the quadratic space $(\irr(T),q_T)$.
The group $\overline{\Aut}(T)$ denotes the subgroup of $O(\irr(T),q_T)$ induced by $\Aut(T)$, and $\Aut_0(T)$ denotes the subgroup of $\Aut(T)$ which acts trivially on $\irr(T)$, that is, $$\overline{\Aut}(T)=\Imm\mu_{T},\qquad \Aut_0(T)=\Ker\mu_T.$$

Let $T^0$ be a strongly regular VOA.
Let $\{T^\alpha\mid \alpha\in D\}$ be a set of inequivalent irreducible $T^0$-modules indexed by a finite abelian group $D$.
A simple VOA $T_D=\bigoplus_{\alpha\in D}T^\alpha$ is called a {\it simple current extension} of $T^0$ if every $T^\alpha$ is a simple current module.
Note that $T^\alpha\boxtimes_{T^0}T^\beta\cong T^{\alpha+\beta}$ and that the simple VOA structure of $T_D$ is uniquely determined by its $T^0$-module structure, up to isomorphism (\cite[Proposition 5.3]{DM04b}).
Two simple current extensions $T_{D}$ and $T_{E}$ of $T^0$ are \emph{equivalent} if there exists an isomorphism $\sigma:T_D\to T_E$ such that $\sigma(T^0)=T^0$, equivalently, 
there exists $\tau\in\Aut(T^0)$ such that $T_D\cong T_E\circ\tau$ as $T^0$-modules.

\subsection{Automorphisms of lattice VOAs}
Let $H$ be an even lattice and let $V_H$ be the lattice VOA associated with $H$ (see \cite{FLM} for detail).
It is well-known (\cite{Do}) that $V_H$ is strongly regular, has group-like fusion and satisfies the positivity condition.
In addition, $\irr(V_H)=\{V_{\lambda+H}\mid \lambda+H\in\mathcal{D}(H)\}$ and $(\irr(V_H),q_{V_H})\cong (\mathcal{D}(H),q_H)$ as quadratic spaces (see \cite{Do}).

Let $\hat{H}=\{\pm e^\alpha\mid \alpha\in H\}$ be a central extension of $H$ by $\{\pm1\}$ satisfying $e^\alpha e^\beta=(-1)^{\langle\alpha|\beta\rangle}e^\beta e^\alpha$ for $\alpha,\beta\in H$.
Note that such a central extension is unique up to isomorphism.
Let $\Aut(\hat{H})$ be the set of all automorphisms of $\hat{H}$.
For $\varphi\in \hat{H}$, we define the element $\iota({\varphi})\in \Aut(H)$ by $\varphi(e^\alpha)\in\{\pm e^{\iota({\varphi})(\alpha)}\}, \alpha\in H$.
Set $O(\hat{H})=\{\varphi\in\Aut(\hat{H})\mid \iota({\varphi})\in O(H)\}$.
It was proved in \cite[Proposition 5.4.1]{FLM} that there exists an exact sequence:
\begin{equation}
1\to \mathrm{Hom}(H,\Z_2)\to O(\hat{H})\ \xrightarrow{\iota}\ O(H)\to1.\label{Eq:exact}
\end{equation}
We also identify $O(\hat{H})$ as a subgroup of $\Aut(V_H)$ as in \cite[Section 2.4]{DN99}.
Note that $\mathrm{Hom}(H,\Z_2)=\{\exp(2\pi\sqrt{-1}\alpha_{(0)})\mid \alpha\in (H^*/2)/H^*\}$ in $\Aut(V_H)$.
 
For $g\in O(H)$, an element $\tau\in O(\hat{H})$ with $\iota(\tau)=g$ is called a \emph{standard lift} of $g$ if $\tau$ acts trivially on the subVOA $V_{H^g}$.
Note that a standard lift of $g$ always exists and standard lifts of $g$ are conjugate in $\Aut(V_H)$ (\cite[Proposition 7.1]{EMS} or \cite[Proposition 4.6]{LS6}); we often denote a standard lift of $g$ by $\hat{g}$.
If $g$ is fixed-point free on $H$, then we have $|\hat{g}|=|g|$ (\cite[Proposition 7.4]{EMS}).

Recall from \cite[Theorem 2.1]{DN99} that 
\begin{equation}
\Aut(V_{H})=\Inn(V_{H})O(\hat{H}).\label{Eq:DN}
\end{equation}
Set $\h={\rm Span}_\C\{ h(-1)\1\mid h\in H\}$.
Then $\h$ is a Cartan subalgebra of the reductive Lie algebra $(V_H)_1$.
By \cite[Lemmas 2.3 and 2.5]{DN99}, we have 
\begin{equation}
\{\sigma\in\Aut(V_H)\mid \sigma=id\ \text{on}\ \h\}=\{\exp(a_{(0)})\mid a\in\h\}\label{Eq:idh}
\end{equation}
and 
\begin{equation}
{\rm Stab}_{\Aut(V_H)}(\h)=\{\sigma\in\Aut(V_H)\mid \sigma(\h)=\h\}=\{\exp(a_{(0)})\mid a\in\h\} O(\hat{H}).\label{Eq:stabh}
\end{equation}
It follows from \eqref{Eq:idh}, \eqref{Eq:stabh} and 
$\ker\iota\subset \{\exp(a_{(0)})\mid a\in\h\}$ (cf. \eqref{Eq:exact}) 
that
\begin{equation}
{\rm Stab}_{\Aut(V_H)}(\h)_{|\h}\cong{\rm Stab}_{\Aut(V_H)}(\h)/\{\exp(a_{(0)})\mid a\in\h\}\cong O(H).\label{Eq:stabhVL}
\end{equation}
The explicit action of $\Aut(V_{H})$ on $\irr(V_{H})$ via the conjugation in Section \ref{preVOA} is well-known (cf. {\cite[Lemma 2.11]{LS6} and \cite[Proposition 2.9]{Sh04}}):

\begin{lemma}\label{L:innM}
\begin{enumerate}[{\rm (1)}]
\item For $\sigma\in\Inn(V_H)$ and $M\in \irr(V_H)$, we have $M\circ \sigma\cong M$, that is, $\Inn(V_H)\subset \Aut_0(V_H)$.
\item For $\sigma\in O(\hat{H})$, we have $ V_{\lambda+H}\circ \sigma \cong V_{(\iota \sigma)^{-1}(\lambda)+H}$ for any 
$\lambda+H\in \mathcal{D}(H)$.
\end{enumerate}
\end{lemma}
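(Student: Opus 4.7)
The plan is to handle the two parts separately, since they stem from two essentially different sources of symmetry (inner automorphisms, versus lifts of lattice isometries).

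For part (1), I would argue that it suffices to check the claim on a set of generators of $\Inn(V_H)$, namely on $\sigma = \exp(a_{(0)})$ for $a \in (V_H)_1$. The key identity is the commutator formula $[a_{(0)}, Y_M(v,z)] = Y_M(a_{(0)} v, z)$, valid on any $V_H$-module $M$, which rearranges to $\exp(a_{(0)}) Y_M(v,z)\exp(-a_{(0)}) = Y_M(\exp(a_{(0)})v, z) = Y_{M\circ\sigma}(v,z)$. Thus, provided $a_{(0)}$ acts locally finitely on $M$ so that $\exp(a_{(0)})$ makes sense as a linear operator on $M$, this operator is a $V_H$-module isomorphism $M \simto M\circ\sigma$. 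Local finiteness is automatic for the relevant generators: when $a = h(-1)\1$ with $h \in \mathfrak{h}$, the zero mode $h(0)$ acts semisimply on each $V_{\lambda+H}$ with eigenvalues $\langle h | \alpha\rangle$ for $\alpha \in \lambda+H$; when $a = e^\alpha$ with $\alpha$ a norm-$2$ element of $H$, the zero mode $(e^\alpha)_{(0)}$ shifts $\mathfrak{h}$-weights by $\alpha$ and is therefore locally nilpotent on the finite-dimensional graded pieces of $V_{\lambda+H}$.

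For part (2), the idea is to construct an explicit isomorphism $\phi\colon V_{(\iota\sigma)^{-1}(\lambda)+H} \simto V_{\lambda+H}\circ\sigma$. Set $g=\iota(\sigma)$ and $\mu = g^{-1}\lambda$. First, one extends the central extension $\hat H$ to a compatible central extension of $H^*$ by $\{\pm 1\}$ and lifts $\sigma$ to an automorphism $\tilde\sigma$ of that enlargement. Then $\phi$ is defined on the standard spanning set of $V_{\mu+H}$ by $h_1(-n_1)\cdots h_k(-n_k) e^\beta \mapsto (gh_1)(-n_1)\cdots (gh_k)(-n_k)\, \tilde\sigma(e^\beta)$, where $\beta \in \mu+H$ and $\tilde\sigma(e^\beta) \in \{\pm e^{g\beta}\}$. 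The intertwining property $\phi\, Y_{V_{\mu+H}}(v,z) = Y_{V_{\lambda+H}}(\sigma v, z)\,\phi$ needs to be checked only on the generators $v = h(-1)\1$ (for $h\in\mathfrak{h}$) and $v = e^\gamma$ (for $\gamma \in H$), where it is a direct computation from the explicit formula for the vertex operator on a lattice module together with the multiplicativity of $\tilde\sigma$.

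The only real technical point is the consistent sign choice $\tilde\sigma(e^\beta) \in \{\pm e^{g\beta}\}$ for $\beta\in H^*$, which amounts to extending $\sigma$ from a lift on $\hat H$ to a lift on a central extension of $H^*$. This is a standard construction in the theory of lattice VOA modules, carried out for instance in the cited references \cite[Lemma 2.11]{LS6} and \cite[Proposition 2.9]{Sh04}; the argument sketched above essentially reproduces those proofs. Part (1) presents no real obstacle beyond the local-finiteness remark.
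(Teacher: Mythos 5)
Your proposal is correct and is essentially the standard argument: the paper itself gives no proof of this lemma, citing \cite[Lemma 2.11]{LS6} and \cite[Proposition 2.9]{Sh04} instead, and your sketch (conjugation by $\exp(a_{(0)})$ for part (1), and the lift of the isometry to a compatible central extension covering $H^*$ for part (2)) reproduces exactly what those references do. The points you flag -- local finiteness of $a_{(0)}$ on modules and the consistent sign choice for $\tilde\sigma(e^\beta)$ -- are indeed the only technical issues, and you handle them correctly.
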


By \eqref{Eq:exact}, \eqref{Eq:DN} and Lemma \ref{L:innM}, we have the following.
\begin{lemma}\label{L:XL}
$\Aut_0(V_H)= \Inn(V_H) \iota^{-1}(O_0(H))$ and $\overline{\Aut}(V_H)\cong O(H)/O_0(H)\cong\overline{O}(H).$   
\end{lemma}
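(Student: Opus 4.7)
The plan is to combine the decomposition $\Aut(V_H)=\Inn(V_H)O(\hat H)$ from \eqref{Eq:DN} with Lemma \ref{L:innM} and the exact sequence \eqref{Eq:exact}. First I would verify the inclusion $\Inn(V_H)\iota^{-1}(O_0(H))\subset \Aut_0(V_H)$: Lemma \ref{L:innM}(1) gives $\Inn(V_H)\subset \Aut_0(V_H)$, and for any $\sigma\in O(\hat H)$ with $\iota(\sigma)\in O_0(H)$, Lemma \ref{L:innM}(2) says $V_{\lambda+H}\circ\sigma\cong V_{(\iota\sigma)^{-1}(\lambda)+H}\cong V_{\lambda+H}$ for every $\lambda+H\in\mathcal{D}(H)$, so $\sigma$ acts trivially on $\irr(V_H)$.

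Next I would establish the reverse inclusion. Given $\sigma\in\Aut_0(V_H)$, use \eqref{Eq:DN} to write $\sigma=\sigma_1\sigma_2$ with $\sigma_1\in\Inn(V_H)$ and $\sigma_2\in O(\hat H)$. Since $\Inn(V_H)\subset\Aut_0(V_H)$, we have $\sigma_2=\sigma_1^{-1}\sigma\in\Aut_0(V_H)$, so $\sigma_2$ preserves every irreducible $V_H$-module up to isomorphism. By Lemma \ref{L:innM}(2), this forces $(\iota\sigma_2)^{-1}$, hence $\iota(\sigma_2)$, to act trivially on $\mathcal{D}(H)$; thus $\iota(\sigma_2)\in O_0(H)$ and $\sigma_2\in\iota^{-1}(O_0(H))$. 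This proves the first equality $\Aut_0(V_H)=\Inn(V_H)\iota^{-1}(O_0(H))$.

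For the second assertion, consider the restriction to $O(\hat H)$ of the quotient map $\Aut(V_H)\to\overline{\Aut}(V_H)$. Because $\Aut(V_H)=\Inn(V_H)O(\hat H)$ and $\Inn(V_H)\subset\Aut_0(V_H)$, this restriction is surjective. Its kernel is $O(\hat H)\cap\Aut_0(V_H)$, which equals $\iota^{-1}(O_0(H))$ by the argument in the previous paragraph (the forward containment is the same, and the reverse was checked in the first paragraph). Hence $\overline{\Aut}(V_H)\cong O(\hat H)/\iota^{-1}(O_0(H))$. Finally, the exact sequence \eqref{Eq:exact} identifies $O(\hat H)/\iota^{-1}(O_0(H))$ with $O(H)/O_0(H)$, which by definition is $\overline{O}(H)$, yielding the desired chain of isomorphisms.

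The whole argument is essentially a bookkeeping exercise once Lemma \ref{L:innM} and the decomposition \eqref{Eq:DN} are in hand; the only point that requires care is keeping track of which containments in $\Aut(V_H)$ follow from acting trivially on $\h$ versus acting trivially on $\irr(V_H)$, but there is no real obstacle to the proof.
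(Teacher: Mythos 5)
Your argument is correct and follows exactly the route the paper intends: the paper derives Lemma \ref{L:XL} directly from \eqref{Eq:exact}, \eqref{Eq:DN} and Lemma \ref{L:innM} without writing out the details, and your proposal simply fills in that bookkeeping (both inclusions for $\Aut_0(V_H)$, and the identification of the kernel of the restriction to $O(\hat H)$). No gaps.
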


\section{Holomorphic VOAs of central charge $24$ as simple current extensions}\label{S:4}
Let $V$ be a (strongly regular) holomorphic VOA of central charge $24$.
By \cite{DM04a,DM04b}, $V$ satisfies one of the following:
\begin{enumerate}[(i)]
\item $V_1=0$;
\item $V$ is isomorphic to a Niemeier lattice VOA;
\item $V_1$ is a semisimple Lie algebra whose Lie rank $\rank V_1$ is less than $24$.
\end{enumerate}
Note that in (ii) and (iii), the subVOA generated by $V_1$ is a full subVOA (\cite[Proposition 4.1]{DM04a}).
In this section, we assume (iii), i.e., 
$0<\rank V_1<24$,
and explain how to determine $K(V)$ and $\Out(V)$.

\begin{remark}\label{R:KOknown}
It is conjectured that if (i) holds, then $V$ is isomorphic to the moonshine VOA $V^\natural$ (\cite{FLM}).
Note that $K(V^\natural)(=\Aut(V^\natural))$ is the Monster simple group and $\Inn(V^\natural)=1$, which shows $K(V^\natural)\not\subset\Inn(V^\natural)$ and $\Out(V^\natural)=\Aut(V^\natural)/\Inn(V^\natural)\cong\Aut(V^\natural)$.

If (ii) holds, then $K(V)$ and $\Out(V)$ are easily determined by \eqref{Eq:DN}; indeed, $K(V_\Lambda)\cong \C^{24}$ and $\Out(V_\Lambda)=O(\Lambda)$ for the Leech lattice $\Lambda$ and $K(V_N)\cong N/Q$ and 
$\Out(V_N)\cong O(N)/W(Q)$ for a Niemeier lattice $N$ with the root lattice $Q\neq\{0\}$.
By \eqref{Eq:idh}, $K(V_N)\subset\Inn(V_N)$ for any Niemeier lattice $N$.
\end{remark}

\subsection{Commutant of a Cartan subalgebra}\label{S:31}
Let $V$ be a holomorphic VOA of central charge $24$ with $0<\rank V_1<24$.
Set $\g=V_1$ and let $\h$ be a Cartan subalgebra of $V_1$.
Set $W=\Com_V(\h)$.
Then $W_1=0$.
Recall from \cite[Corollary 5.8]{DM06} that the double commutant of a Cartan subalgebra in a simple affine VOA at positive level is a lattice VOA.
Since the subVOA generated by $V_1$ is a tensor product of simple affine VOAs at positive level (\cite[Theorem 1.1]{DM06}), the double commutant $\Com_{V}(\Com_V(\h))$ contains a lattice VOA as a full subVOA; there exists an even lattice $L$ such that $$\Com_{V}(\Com_V(\h))\cong V_L.$$
In fact, $L$ is uniquely determined by the Lie algebra structure of $\g$, which will be verified by the explicit description of $L$ in Section \ref{S:5} (cf.\ \cite{Ho2,ELMS}); $L=L_\g$ is called the \emph{orbit lattice} in \cite{Ho2}.
Hence $V$ contains $V_L\otimes W$ as a full subVOA, which shows  
\begin{equation}
\rank L+c_W=24,\label{Eq:quad0}
\end{equation}
where $c_W$ is the central charge of $W$.
Note that the injective map from $V_L\otimes W$ to $V$ is given by $a\otimes b\mapsto a_{(-1)}b$ for $a\in V_L$ and $b\in W$.
By \cite{Mi15,CM} and \cite[Section 4.3]{CKLR19}, $W$ is also strongly regular.
In addition, by \cite[Lemma 5.2]{ELMS}, $W$ satisfies the positivity condition;
indeed, $W$ contains a full subVOA isomorphic to the tensor product of parafermion VOAs (\cite{DR}), which satisfies the positivity condition.

It then follows from \cite{Lin,CKM} that $W$ has group-like fusion and 
\begin{equation}
(\irr(V_L),q_{V_L})\cong(\irr(W),-q_W)\label{Eq:quad}
\end{equation}
as quadratic spaces.
Note that $O(\irr(W),q_W)=O(\irr(W),-q_W)$ as groups.
The VOA $W$ was essentially identified in \cite[Theorem 4.7]{Ho2} (cf.\ \cite[Theorem 4.2]{HM}) as follows; note that the necessary assumptions are confirmed in \cite{Lam20}.

\begin{theorem}\label{T:Ho} 
The VOA $W$ is isomorphic to the orbifold VOA $V_{\Lambda_g}^{\hat{g}}$
for an isometry $g$ of the Leech lattice $\Lambda$, where $g$ belongs to one of $10$ conjugacy classes $2A,2C,3B$, $4C,5B,6F$, $6G,7B,8E$ and $10F$, and $\hat{g}$ is a (standard) lift of $g_{|\Lambda_g}\in O(\Lambda_g)$.
In addition, the conjugacy class $g$ is uniquely determined by the structure of $V_1$.
\end{theorem}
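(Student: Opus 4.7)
The plan is to carry out a reverse orbifold construction, following the approach of \cite{Ho2} under the assumptions verified in \cite{Lam20}. The starting point is the inclusion $V_L \otimes W \subset V$ as a full subVOA, together with the fact (from Section \ref{S:31}) that $W$ is strongly regular, satisfies positivity, has group-like fusion, and the central charge relation $c_W = 24 - \rank L$. The goal is to produce a $\Z_n$-orbifold of $V$ that lands in the Leech lattice VOA $V_\Lambda$ and from which $W$ emerges as a fixed-point subVOA $V_{\Lambda_g}^{\hat{g}}$.

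First, I would construct a suitable inner automorphism $\sigma \in \Inn(V)$ of finite order $n$ of the form $\sigma = \exp(2\pi\sqrt{-1}\, h_{(0)})$ with $h \in \h \cap L^*$. Since $W = \Com_V(\h)$, such $\sigma$ acts trivially on $W$ but acts as a translation on the $V_L$-part and permutes the $V_L \otimes W$-summands of $V$ according to the pairing with $h$. The element $h$ is chosen so that in the $\Z_n$-orbifold $\widetilde{V}$ of $V$ by $\sigma$, constructed via \cite{EMS,Mi15}, every simple root of $V_1$ is eliminated and the weight-one Lie algebra $\widetilde{V}_1$ becomes abelian of rank $24$. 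The existence of such a choice is one of the assumptions verified in \cite{Lam20} and is controlled by the structure of the simple current extension $V \supset V_L \otimes W$.

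Next, since $\widetilde{V}$ is holomorphic of central charge $24$ with abelian Lie rank $24$, it must be a Niemeier lattice VOA (\cite{DM04b}), and by the classification of Niemeier lattices, the only such VOA with purely abelian $\widetilde{V}_1$ is $V_\Lambda$. Thus $\widetilde{V} \cong V_\Lambda$. Because $\sigma$ fixes $W$ pointwise, $W$ embeds into $V^\sigma$, and by orbifold duality (the $\Z_n$-graded decomposition of $V_\Lambda$ by the dual automorphism $\hat g$) the same $W$ appears as a full subVOA of $V_\Lambda^{\hat{g}}$ for an automorphism $\hat g \in \Aut(V_\Lambda)$ of order $n$. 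Identifying $W$ with the commutant of a Cartan subalgebra of type $\Lambda^g \otimes \C$ inside $V_\Lambda^{\hat g}$, one obtains $W \cong V_{\Lambda_g}^{\hat g}$, where $g \in O(\Lambda)$ is the isometry underlying $\hat g$ and $\Lambda_g$ is its coinvariant lattice. Using conjugacy of standard lifts (\cite[Proposition 7.1]{EMS}, \cite[Proposition 4.6]{LS6}), we may replace $\hat g$ by the standard lift of $g|_{\Lambda_g}$ without changing the isomorphism type of $W$.

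Finally, to show that $g$ lies in one of the $10$ listed conjugacy classes and that the class is uniquely determined by $V_1$, one matches the invariants of $W$ against those of the candidate VOAs $V_{\Lambda_{g'}}^{\hat{g'}}$: the central charge $c_W = 24 - \rank L_\g$ pins down $\rank \Lambda^{g'}$, and the conformal weight spectrum on $\irr(W)$ together with low-weight dimensions $\dim W_n$ rule out all but one class, which must come from the admissible $g$-invariant sublattices of $\Lambda$. The main obstacle is the first step: guaranteeing that $\sigma$ can be chosen so that $\widetilde{V} \cong V_\Lambda$ rather than any other Niemeier lattice VOA. This is a delicate compatibility statement between the simple current extension data and the orbifold construction, and it is precisely where the verification in \cite{Lam20} of the assumptions of \cite[Theorem 4.7]{Ho2} does the essential work.
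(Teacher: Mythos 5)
The paper offers no proof of Theorem \ref{T:Ho}: it is imported from \cite[Theorem 4.7]{Ho2}, with the hypotheses of that theorem checked in \cite{Lam20}, and the remark immediately following it points to \cite[Theorem 6.3]{ELMS} for an alternative argument. Your sketch is essentially an outline of that alternative route (reverse orbifolding of $V$ down to $V_\Lambda$ by a suitable inner automorphism), so in spirit it matches what the paper relies on; the overall architecture --- choose $\sigma=\exp(2\pi\sqrt{-1}h_{(0)})$ with $h\in\h$, orbifold to a holomorphic VOA with abelian weight-one space, invoke \cite{DM04b} and the rootlessness of $\Lambda$ to conclude $\widetilde{V}\cong V_\Lambda$, and then recover $W$ as the commutant of $(\Lambda^g\otimes\C)(-1)\1$ in the fixed-point subVOA of the dual automorphism --- is the correct one.

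However, two of your attributions hide genuine gaps. First, the existence of an $h$ for which the orbifold of $V$ by $\sigma$ is the Leech lattice VOA is \emph{not} among the assumptions verified in \cite{Lam20}; that reference establishes the group-like fusion, the fusion rules and the quadratic-space structure of $\irr(V_{\Lambda_g}^{\hat{g}})$, which is what H\"ohn's simple-current-extension argument needs. The existence of the ``deep hole'' automorphism is the hard analytic input, proved in \cite{ELMS} (and \cite{MS1}) via the dimension formula and the very strange formula, so your proof cannot cite \cite{Lam20} at its crucial step. Second, the dual automorphism of $V_\Lambda$ produced by the reverse orbifold is in general of the form $\hat{g}\sigma_v$ with $v\in\Lambda^g\otimes\Q$, not a standard lift of a lattice isometry; to land on $V_{\Lambda_g}^{\hat{g}}$ you must observe that the inner factor acts trivially on $V_{\Lambda_g}$, so that the commutant of the Cartan subalgebra is unaffected, and only then invoke the conjugacy of standard lifts \cite[Proposition 7.1]{EMS}, \cite[Proposition 4.6]{LS6}. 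Finally, the restriction to the ten conjugacy classes and the uniqueness of the class given $V_1$ are not obtained by comparing conformal weights and graded dimensions of $W$ case by case; they come from the a priori classification of the conjugacy classes of $O(\Lambda)$ that can arise from generalized deep holes with nonzero fixed sublattice, matched against Schellekens' list as recorded in Table \ref{T:Lie}.
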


\begin{remark} Theorem \ref{T:Ho} can also be proved by using the fact that any holomorphic VOA of central charge $24$ is constructed from the Leech lattice VOA by a cyclic orbifold construction (\cite[Theorem 6.3]{ELMS}).
\end{remark}

\begin{theorem}\cite{GO+102,Sh04,Lam18b,LamCFT,BLS}\label{T:inj}
Let $g\in O(\Lambda)$ whose conjugacy class is one of $10$ cases in Theorem \ref{T:Ho}.
Then the automorphism group of $W\cong V_{\Lambda_g}^{\hat{g}}$ has the shape as in Table \ref{Table:main} (see \cite{Wi} for the notation of classical groups).
In addition, the group homomorphism $\mu_W$ in \eqref{Eq:Orthogonal} 
is injective and the index of $\overline{\Aut}(W)(\cong\Aut(W))$ in $O(\irr(W),q_W)$ is given as in Table \ref{Table:main}.
\end{theorem}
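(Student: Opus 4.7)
The plan is to establish the three assertions — the shape of $\Aut(W)$, the injectivity of $\mu_W$, and the index $|O(\irr(W),q_W):\overline{\Aut}(W)|$ — by a case-by-case analysis of the ten conjugacy classes enumerated in Theorem~\ref{T:Ho}. The description of $\Aut(W)$ in each case has already been carried out in the references cited in the statement: the $2A$-orbifold case in~\cite{GO+102}, and the remaining nine in~\cite{Sh04,Lam18b,LamCFT,BLS}. So the primary work is to verify the injectivity of $\mu_W$ and to compute the numerical index in each of the ten cases.

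For the injectivity of $\mu_W$, the natural strategy is to exploit the fact that $V_{\Lambda_g}$ is a cyclic $\langle\hat g\rangle$-graded simple current extension of $W = V_{\Lambda_g}^{\hat g}$. Any $\sigma\in\Aut_0(W)$ preserves every irreducible $W$-module up to isomorphism; in particular, it preserves each $\hat g$-eigenspace of $V_{\Lambda_g}$ as a $W$-module, and hence lifts to an automorphism $\tilde\sigma\in\Aut(V_{\Lambda_g})$ that commutes with $\hat g$. Applying~\eqref{Eq:DN} to write $\tilde\sigma\in\Inn(V_{\Lambda_g})O(\widehat{\Lambda_g})$, and using that $g$ is fixed-point-free on $\Lambda_g$ so that $|\hat g|=|g|$ by~\cite[Proposition~7.4]{EMS}, the requirement that $\tilde\sigma$ induce the identity on each graded piece as a $W$-module forces $\tilde\sigma\in\langle\hat g\rangle$. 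Since powers of $\hat g$ restrict to the identity on $W$, this gives $\sigma=\mathrm{id}_W$ as required.

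For the index, I would compute $|O(\irr(W),q_W)|$ directly from the finite quadratic space structure of $(\irr(W),q_W)$, which by~\eqref{Eq:quad} is isometric (up to sign of the form) to $(\mathcal{D}(L),q_L)$ for any orbit lattice $L$ arising in the given conjugacy class. For the prime-order cases this quadratic space is an $\F_p$-space of known rank and type, so $O(\irr(W),q_W)$ is a standard classical group; for the composite-order cases one decomposes into $p$-primary components. Dividing the order of this group by $|\Aut(W)|$ (equal to $|\overline{\Aut}(W)|$ by the injectivity just established) yields the claimed index. The main obstacle is the class $2C$, where $(\irr(W),q_W)$ is a large $\F_2$-quadratic space so that $O(\irr(W),q_W)$ is an orthogonal group of substantial order and $\overline{\Aut}(W)$ has strictly larger index than one — this index is precisely what later produces the multi-orbit phenomenon in Theorem~\ref{T:main13}(2); the class $6G$ is similarly delicate due to the mixed $2$- and $3$-primary structure, and a direct computation (aided by MAGMA as noted by the authors) appears to be the most efficient route in both.
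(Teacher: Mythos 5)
You should first note that the paper itself contains no proof of Theorem \ref{T:inj}: it is imported wholesale from \cite{GO+102,Sh04,Lam18b,LamCFT,BLS}, where the determination of $\Aut(W)$ and the faithfulness of its action on $\irr(W)$ are the main results and occupy substantial case-by-case work (partly by MAGMA, as the remark after the theorem acknowledges). Your outline for the index is fine as arithmetic once $|\Aut(W)|$ and the quadratic space $(\irr(W),q_W)$ are known, but it presupposes the shape of $\Aut(W)$, which is precisely the content being cited; there is no independent shortcut there.

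The injectivity argument has a genuine gap at its decisive step. From $\sigma\in\Ker\mu_W$ you correctly obtain a lift $\tilde\sigma\in\Aut(V_{\Lambda_g})$ commuting with $\hat g$: each graded piece $W^{(i)}$ of $V_{\Lambda_g}$ satisfies $W^{(i)}\circ\sigma\cong W^{(i)}$, so $\sigma$ extends and the extension preserves the grading. But the conclusion that this ``forces $\tilde\sigma\in\langle\hat g\rangle$'' does not follow. The hypothesis $\sigma\in\Aut_0(W)$ says only that every irreducible $W$-module is \emph{isomorphic} to its $\sigma$-conjugate; it does not say that $\tilde\sigma$ induces the identity on the graded pieces, and the condition that the isomorphism classes of the $W^{(i)}$ are fixed is satisfied by \emph{any} lift, so it carries no further information. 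Concretely, any isometry $f\in O_0(\Lambda_g)$ commuting with $g$ yields a lift $\hat f$ commuting with $\hat g$ whose restriction to $W$ fixes every untwisted class in $\irr(W)$, and the paper's own Lemma \ref{Lem:center} records that $O_0(\Lambda_g)$ is strictly larger than $\langle g\rangle$ for the classes $7B$ (order $21$) and $8E$ (order $16$). Excluding such restrictions from $\Ker\mu_W$ requires analyzing the action on the twisted sectors of $\irr(W)$, which your argument never touches; that analysis (together with controlling the automorphisms of $W$ that do not come from $V_{\Lambda_g}$ at all, which is needed for the first assertion of the theorem) is exactly the nontrivial content of the cited references and cannot be replaced by the one-line deduction in your sketch.
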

\begin{remark} 
The shapes of some groups in Table \ref{Table:main} are recalculated by MAGMA; they are more precise than the original shapes in the references.
We adopt the genus symbol $B,C,\dots,K$ of $(\irr(W),-q_W)$ and quadratic space structures from \cite[Table 4]{Ho2}.

\end{remark}

{\tiny
\begin{longtable}{|c|c|c|c|c|c|c|}
\caption{VOAs $W= V_{\Lambda_g}^{\hat{g}}$ for $g\in O(\Lambda)$}\label{Table:main}
\\ \hline 
Genus&Class& ${\rm rank}\Lambda_g$&$(\irr(W),-q_W)$&$\Aut(W)(\cong\overline{\Aut}(W))$&$O(\irr(W),q_W)$&index\\ \hline
$B$&$2A$&$8$&$2^{+10}_{\rm{I\hspace{-.01em}I}}$&$\GO_{10}^+(2)$&$\GO_{10}^+(2)$&$1$\\
$C$&$3B$&$12$&$3^{-8}$&$\PO_8^-(3).2$&$\GO_8^-(3)$&$2$\\
$D$&$2C$&$12$&$2^{-10}_{\rm{I\hspace{-.01em}I}}4^{-2}_{\rm{I\hspace{-.01em}I}}$&
$2^{1+20}_+.(\Sym_{12}\times\Sym_3)$
&$2^{1+20}_+.(\GO_{10}^-(2)\times\Sym_3)$&$2^{11}\cdot 3\cdot17$\\
$E$&$4C$&$14$& $2^{+2}_{2}4^{+6}_{\rm{I\hspace{-.01em}I}}$&$2^{21}.\GO_7(2)$&$2^{22}.\GO_7(2)$&$2$ \\
$F$&$5B$&$16$&$5^{+6}$&$2.\PO_6^+(5).2$&$\GO_6^+(5)$&$2$\\
$G$&$6E$&$16$&$2^{+6}_{\rm{I\hspace{-.01em}I}}3^{-6}$&$\GO_6^+(2)\times \GO_6^+(3)$&$\GO_6^+(2)\times \GO_6^+(3)$&$1$\\ 
$H$&$7B$&$18$& $7^{-5}$&$\PO_5(7).2$&$\GO_5(7)$&$2$\\
$I$&$8E$&$18$& $2^{+1}_{5}4^{+1}_{1}8^{+4}_{\rm{I\hspace{-.01em}I}}$&$2^{11+9}.\Sym_6$&$2^{12+9}.\Sym_6$&$2$\\
$J$&$6G$&$18$&$2^{+4}_{\rm{I\hspace{-.01em}I}}4^{-2}_{\rm{I\hspace{-.01em}I}}3^{+5}$&$ 2^{1+8}_+{:}(\Sym_3^3) \times \PO_5(3).2.$&$2^{1+8}_+{:}(\GO_4^+(2)\times\Sym_3)\times \GO_5(3)$&$4$\\
$K$&$10F$&$20$&$2^{-2}_{\rm{I\hspace{-.01em}I}}4^{-2}_{\rm{I\hspace{-.01em}I}}5^{+4}$&$2^{1+4}_+{:}(2\times \Sym_3)\times \GO_4^+(5)$&$2^{1+4}_+{:}(\Sym_3\times \Sym_3)\times \GO_4^+(5)$&$3$\\
\hline 
\end{longtable}
}

The following properties of $\overline{\Aut}(W)(\cong\Aut(W))$ will be used later.

\begin{lemma}\label{Lem:center}
Assume that $g\in O(\Lambda)$ belongs to one of $10$ conjugacy classes in Theorem \ref{T:Ho}.
Set $W=V_{\Lambda_g}^{\hat{g}}$.
\begin{enumerate}[{\rm (1)}]
\item If the conjugacy class of $g$ is neither $2C$, $6G$  nor $10F$, then $\overline{\Aut}(W)$ is a normal subgroup of $O(\irr(W),q_W)$. 
\item If the conjugacy class of $g$ is $3B,4C,6G,7B$ or $8E$, 
then $\overline{\Aut}(W)$ does not contain the $-1$-isometry of the abelian group $\irr(W)$.
\end{enumerate}
\end{lemma}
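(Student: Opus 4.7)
For part (1), I plan to simply read off the last column of Table \ref{Table:main}: for the seven conjugacy classes $2A, 3B, 4C, 5B, 6E, 7B, 8E$, the index $[O(\irr(W), q_W) : \overline{\Aut}(W)]$ is either $1$ or $2$. Since any subgroup of index at most $2$ is automatically normal in its ambient group, (1) follows immediately.

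Part (2) requires more work. By Theorem \ref{T:inj} the homomorphism $\mu_W$ is injective, so $-1 \notin \overline{\Aut}(W)$ is equivalent to asserting that no VOA automorphism of $W$ sends every irreducible $W$-module to its contragredient. My plan is a case-by-case verification based on the explicit structure of $\Aut(W)$ given in Table \ref{Table:main} and its embedding into $O(\irr(W), q_W)$. In the classes $3B$ and $7B$, the ambient group $O(\irr(W), q_W)$ is a general orthogonal group over $\F_3$ or $\F_7$ whose centre is $\langle -1 \rangle$; the listed index-$2$ subgroup $\overline{\Aut}(W)$ is of ``$\mathrm{P}\Omega\cdot 2$''-type, an extension of a projective orthogonal group (which kills the centre) by a graph-like outer automorphism, so it does not meet the centre of $O(\irr(W), q_W)$ and in particular $-1 \notin \overline{\Aut}(W)$. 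For the classes $4C$ and $8E$, the discriminant form has a non-trivial $\Z_4$- or $\Z_8$-summand and $\overline{\Aut}(W)$ is given as a large $2$-group extension of a smaller orthogonal group; I would verify $-1 \notin \overline{\Aut}(W)$ directly by comparing orders and generators, using MAGMA as elsewhere in the paper. For $6G$ (where the index is $4$) the form decomposes as $2^{+4}_{\rm II} 4^{-2}_{\rm II} \oplus 3^{+5}$, and the $3$-part factor of $\Aut(W)$ is $\mathrm{P}\Omega_5(3).2 \subsetneq \GO_5(3)$; applying the ``$\mathrm{P}\Omega\cdot 2$''-argument from the $3B$ case to this factor shows that the $3$-component of $-1$ is not realised, which is enough to force $-1 \notin \overline{\Aut}(W)$.

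The main technical obstacle is to unambiguously identify \emph{which} index-$2$ (or index-$4$) subgroup of $O(\irr(W), q_W)$ the group $\overline{\Aut}(W)$ actually is, because Atlas-style notation does not always pin down such a subgroup up to conjugacy inside the ambient orthogonal group. I plan to resolve this by drawing on the explicit descriptions of $\Aut(W)$ in \cite{GO+102, Sh04, CLS, Lam18b, LamCFT, BLS}, and on direct computer-algebra verification for the most delicate $2$-power cases.
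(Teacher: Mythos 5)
Part (1) of your proposal is correct and is exactly the paper's argument: the index in Table \ref{Table:main} is $1$ or $2$ for all classes other than $2C$, $6G$, $10F$, and an index-$2$ subgroup is normal.

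For part (2) there is a genuine gap, and you have in fact put your finger on it yourself without closing it: the whole content of the statement is to decide \emph{which} index-$2$ (or index-$4$) subgroup of $O(\irr(W),q_W)$ the group $\overline{\Aut}(W)$ is, and the shape notation of Table \ref{Table:main} cannot settle this. In the paper's convention $A.B$ merely denotes some extension, so for instance $\langle-1\rangle\times\mathrm{P}\Omega_8^-(3)$ has shape ``$\mathrm{P}\Omega_8^-(3).2$'' just as well as $SO$-type or diagonally twisted complements do; your assertion that a ``$\mathrm{P}\Omega\cdot2$-type'' subgroup ``does not meet the centre'' is therefore not a deduction but a restatement of what must be proved. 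The case $5B$ is a concrete warning: there $\overline{\Aut}(W)$ is likewise an index-$2$ subgroup of $\mathrm{P}\Omega{\cdot}2$ flavour inside $\GO_6^+(5)$, and yet it \emph{does} contain the $-1$-isometry (Remark \ref{R:5Bb}) --- which is precisely why $5B$ is excluded from part (2). Deferring the identification to the references and to unspecified MAGMA runs leaves the proof incomplete at its only nontrivial point. The paper avoids the identification problem altogether by a lattice-theoretic argument: assuming $-1\in\overline{\Aut}(W)$, every irreducible $W$-module is sent to its contragredient, so the automorphism lifts to $V_{\Lambda_g}$ and yields $f\in O(\Lambda_g)$ of order $2$ with $f=-1$ on $\mathcal{D}(\Lambda_g)$ and $fgf^{-1}=g^{-1}$; then $-f$ and $g$ both lie in $O_0(\Lambda_g)$ and generate a dihedral group of order $2|g|$, and a MAGMA computation of $O_0(\Lambda_g)$ for the five relevant classes (cyclic of order $|g|$ for $3B$, $4C$, $6G$; order $21$ for $7B$; order $16$ but not dihedral for $8E$) gives a contradiction. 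If you want to salvage your route, you would need to supply, for each class, an explicit generator-level embedding of $\Aut(W)$ into $O(\irr(W),q_W)$ and verify $-1$ is not in the image --- at which point you have essentially reproduced the computational burden the paper shifts onto the much smaller group $O_0(\Lambda_g)$.
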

\begin{proof}
(1) is obvious from the indexes in Table \ref{Table:main}.

Assume that $\overline{\Aut}(W)$ contains the $-1$-isometry $\sigma$; we view $\sigma$ as an element of $\Aut(W)$.
Then for any $M\in\irr(W)$, $M\circ \sigma$ is the contragredient module $M'$ of $M$.
Recall that the fusion products in $\irr(W)$ are determined in \cite{Lam20}.
In particular, $V_{\lambda+\Lambda_g}'\cong V_{-\lambda+\Lambda_g}$ as $W$-modules for any $\lambda+\Lambda_g\in\mathcal{D}(\Lambda_g)$.
Then, $V_{\Lambda_g}\circ\sigma\cong V_{\Lambda_g}$, which shows that $\sigma$ can be lifted to an automorphism of $V_{\Lambda_g}$ (\cite[Theorem 3.3] {Sh04}); we fix such an automorphism of $V_{\Lambda_g}$ and use the same symbol $\sigma$.
In addition, $V_{\lambda+\Lambda_g}\circ \sigma\cong V_{-\lambda+\Lambda_g}$ as $V_{\Lambda_g}$-modules for any $\lambda+\Lambda_g\in\mathcal{D}(\Lambda_g)$.
By \eqref{Eq:exact}, \eqref{Eq:DN} and Lemma \ref{L:innM}, there exists $f\in O(\Lambda_g)$ of order $2$ such that $\sigma\in \Inn(V_{\Lambda_g})\iota^{-1}(f)$ and $f=-1$ on $\mathcal{D}(\Lambda_g)$.
By the fusion products in $\irr(W)$, the $\sigma$-conjugate modules of irreducible $\hat{g}^i$-twisted $V_{\Lambda_g}$-modules are irreducible $\hat{g}^{-i}$-twisted $V_{\Lambda_g}$-modules.
Hence $f gf^{-1}=g^{-1}$.
Then $-f$ is an element in $O_0(\Lambda_g)$ of order $2$ and $(-f)g(-f)^{-1}=g^{-1}$.
It follows from $\Lambda^*=\Lambda$ that for any element $\lambda+\Lambda_g\in \mathcal{D}(\Lambda_g)$ there exists $\xi+\Lambda^g\in\mathcal{D}(\Lambda^g)$ such that $(\lambda+\Lambda_g,\xi+\Lambda^g)$ appears in $\Lambda/(\Lambda^g\oplus\Lambda_g)$.
Since $g_{|\Lambda^g}$ act trivially on $\mathcal{D}(\Lambda^g)$ and $g\in O(\Lambda)$, we see that $g$ preserves every element in $\Lambda/(\Lambda^g\oplus\Lambda_g)$.
Hence $g\in O_0(\Lambda_g)$.
Thus, $O_0(\Lambda_g)$ contains the subgroup $\langle f,g\rangle$ isomorphic to the dihedral group of order $2|g|$.

By using MAGMA, one can verify the following: if the conjugacy class of $g$ is $3B$, $4C$ or $6G$, then $O_0(\Lambda_g)$ is the cyclic group $\langle g\rangle$; if the conjugacy class of $g$ is $7B$, then $O_0(\Lambda_g)$ has order $21$; if the conjugacy class of $g$ is $8E$, then $O_0(\Lambda_g)$ has order $16$ but it is not the dihedral group of order $16$.
Hence we obtain (2).
\end{proof}

\begin{remark}\label{R:5Bb} If the conjugacy class of $g$ is $5B$, then $O_0(\Lambda_g)$ is a dihedral group of order $10$ (\cite{GL5A}); in fact, $\Aut(W)$ contains the $-1$-isometry (\cite{Lam18b}).
\end{remark}

\begin{lemma}\label{L:level}
Let $g\in O(\Lambda)$ whose conjugacy class is one of the $10$ cases in Theorem \ref{T:Ho}.
Set $\ell=2|g|$ if the conjugacy class of $g$ is $2C$, $6G$ or $10F$, and set $\ell=|g|$ otherwise.
Set $W=V_{\Lambda_g}^{\hat{g}}$.
Let $H$ be an even lattice satisfying $(\mathcal{D}(H),q_H)\cong (\irr(W),-q_W)$ as quadratic spaces.
Then $H$ has level $\ell$.
Moreover, if the conjugacy class of $g$ is $2A$, $3B$, $5B$, $6E$ or $7B$ and the rank of $H$ is $24-\rank\Lambda_g$, then $\sqrt{\ell}H^*$ also has level $\ell$.

\end{lemma}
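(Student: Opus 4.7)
The plan for part (i) is a direct inspection of Table \ref{Table:main}. In each of the ten rows, the genus symbol of $(\irr(W),-q_W)$ is recorded, and the exponent of this $\Q/\Z$-valued form (the smallest positive integer annihilating it) can be read off: for example it is $2$ for $2^{+10}_{\mathrm{II}}$, it is $4$ for both $2^{-10}_{\mathrm{II}}4^{-2}_{\mathrm{II}}$ and $2^{+2}_{2}4^{+6}_{\mathrm{II}}$, it is $12$ for $2^{+4}_{\mathrm{II}}4^{-2}_{\mathrm{II}}3^{+5}$, and so on. In every row this exponent coincides with the stated value of $\ell$. Since $(\mathcal{D}(H),q_H)\cong(\irr(W),-q_W)$, and the level of $H$ coincides with the exponent of $q_H$ on $\Q/\Z$ (by unwinding the definition: $\sqrt{\ell}H^*$ is even if and only if $\ell\,q_H\equiv 0$ on $\mathcal{D}(H)$), part (i) follows.

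For part (ii), I will set $L=\sqrt{\ell}H^*$, which is even since $H$ has level $\ell$, and which has dual $L^{*}=\tfrac{1}{\sqrt{\ell}}H$. The level of $L$ is then the smallest positive integer $m$ for which $\sqrt{m/\ell}\,H$ is even, and this clearly divides $\ell$. I plan to argue by contradiction: if the level equals $\ell/p$ for some prime $p\mid\ell$, then $\tfrac{1}{\sqrt{p}}H$ is even, which forces $H=\sqrt{p}\,H_0$ for some even lattice $H_0$ with $\rank H_0=\rank H$. From $H^{*}=\tfrac{1}{\sqrt{p}}H_0^{*}$ a short computation gives
\[
|\mathcal{D}(H)|\;=\;|H_0^{*}/pH_0|\;=\;p^{\rank H}\cdot|\mathcal{D}(H_0)|,
\]
so in particular $p^{\rank H}\mid|\mathcal{D}(H)|$.

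To close the argument I will then perform a divisibility check, using $|\mathcal{D}(H)|=|\irr(W)|$ from Table \ref{Table:main} together with the assumed equality $\rank H=24-\rank\Lambda_g$. The checks are: $2^{16}\nmid 2^{10}$ for $2A$, $3^{12}\nmid 3^{8}$ for $3B$, $5^{8}\nmid 5^{6}$ for $5B$, $7^{6}\nmid 7^{5}$ for $7B$, and for $6E$ neither $2^{8}\mid 2^{6}3^{6}$ nor $3^{8}\mid 2^{6}3^{6}$ holds. All five fail, giving the required contradiction. I do not foresee any significant obstacle beyond this elementary bookkeeping. The argument is intrinsically arithmetic, and it will also explain the restriction to these five classes: in each of the remaining five conjugacy classes ($2C$, $4C$, $6G$, $8E$, $10F$) the divisibility $p^{\rank H}\mid|\mathcal{D}(H)|$ does hold for some prime $p\mid\ell$, so the same argument cannot rule out a drop in level below $\ell$.
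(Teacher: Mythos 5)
Your proposal is correct and follows essentially the same route as the paper: part (i) identifies the level of $H$ with the exponent of the quadratic form $q_H\cong -q_W$ (the paper reads this exponent from the classification of irreducible $W$-modules in \cite{Lam20} rather than from the genus symbols, but the information is the same), and part (ii) is exactly the paper's argument that $(1/\sqrt{n})H$ even for $n>1$ would force $n^{\rank H}\mid|\mathcal{D}(H)|=|\irr(W)|$, which the orders in Table \ref{Table:main} rule out. Your closing observation about why the other five classes are excluded is a sensible sanity check but not needed for the lemma.
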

\begin{proof}
By the classification of irreducible $W$-modules (see \cite{Lam20}), one can see that $\ell$ is the minimal positive integer such that $q_W(\irr(W))\subset(1/\ell)\Z_{\ge0}$.
It then follows from $(\mathcal{D}(H),q_H)\cong (\irr(W),-q_W)$ that $\ell$ is also the minimal positive integer satisfying $q_H(\mathcal{D}(H))\subset(1/\ell)\Z_{\ge0}$.
Hence $\ell$ is the minimal positive integer so that $\sqrt{\ell}H^*$ is even, and $H$ has level $\ell$.

Assume that the conjugacy class of $g$ is $2A$, $3B$, $5B$, $6E$ or $7B$ and the rank of $H$ is $24-\rank\Lambda_g$.
Since $\sqrt{\ell}(\sqrt{\ell}H^*)^*=H$, the latter assertion follows from the fact that $(1/\sqrt{n})H$ is not even if $n\in \Z_{>1}$.
Indeed, if $(1/\sqrt{n})H$ is even for $n\in\Z_{\ge1}$, then $(1/\sqrt{n})H\subset \sqrt{n}H^*$, and hence $n^{\rank H}$ divides $|H^*/H|=|\irr(W)|$.
By Table \ref{Table:main}, the only possibility is $n=1$.
\end{proof}

\begin{remark}\label{R:ell} In Lemma \ref{L:level}, $\ell$ is equal to $|\hat{g}|$ for the (standard) lift $\hat{g}\in O(\hat{\Lambda})$ of $g$ (cf.\ \cite[Proposition 7.4]{EMS}).
\end{remark}

\subsection{The group $K(V)$}
Let $V$ be a holomorphic VOA of central charge $24$ with $0<\rank V_1<24$.
Let $\h$ be a Cartan subalgebra of $V_1$.
Set $W=\Com_V(\h)$ and $V_L=\Com_V(W)$ as in Section \ref{S:31}.
In this subsection, we describe the group $K(V)$, defined in the introduction, in terms of $V_1$ and $L$.

Recall that $V_L\otimes W$ has group-like fusion.
Hence $V$ is a simple current extension of $V_L\otimes W$.
Since $V$ is holomorphic, for any irreducible $V_L$-module $V_{\lambda+L}$, there exists a unique irreducible $W$-module $X$ such that $V_{\lambda+L}\otimes X$ appears as an irreducible $V_L\otimes W$-submodule of $V$ with multiplicity one; let $\varphi$ be the bijection from $\mathcal{D}(L)$ to $\irr(W)$ defined by the following decomposition of $V$ as a $V_L\otimes W$-module:
\begin{equation}
V\cong\bigoplus_{\lambda+L\in\mathcal{D}(L)}V_{\lambda+L}\otimes \varphi({\lambda+L})\label{Eq:V}.
\end{equation}
Then $\varphi$ is a group isomorphism and $\rho(V_{\lambda+L})+\rho(\varphi({\lambda+L}))\in\Z$, which shows that $\varphi$ is an isometry of quadratic spaces from $(\mathcal{D}(L),q_{L})$ to $(\irr(W),-q_W)$.
Set 
\begin{equation}
S_\varphi=\{(V_{\lambda+L},\varphi(\lambda+L))\mid \lambda+L\in\mathcal{D}({L})\}\subset\irr(V_L)\times\irr(W).\label{Eq:S}
\end{equation}
Since $V$ is holomorphic, $S_\varphi$ is a maximal totally isotropic subspace of $(\irr(V_L),q_{V_L})\oplus(\irr(W),q_W)$.
Here a vector is isotropic if the value of the form is zero and a totally isotropic subspace is a subspace consisting of isotropic vectors.
Note that $S_\varphi\cong \mathcal{D}(L)$ as groups.
We then view $V$ as a simple current extension of $V_L\otimes W$ graded by $S_\varphi$; $V=\bigoplus_{M\in S_\varphi}M$.
Here $(V_{\lambda+L},\varphi(\lambda+L))\in S_\varphi$ is regarded as an irreducible $V_L\otimes W$-module $V_{\lambda+L}\otimes\varphi(\lambda+L)$. 
Hence the dual $S_\varphi^*={\rm Hom}(S_\varphi,\C^\times)$ of $S_\varphi$ acts faithfully on $V$ as an automorphism group.
More precisely, by \eqref{Eq:S}, we have 
\begin{equation}
S_\varphi^*=\{\exp(2\pi\sqrt{-1}v_{(0)})\mid v+L\in \mathcal{D}(L)\}.\label{Eq:S*}
\end{equation}
In addition, by \cite[Theorem 3.3]{Sh04}, we obtain
\begin{align}
S_\varphi^*&=\{\sigma\in\Aut(V)\mid \sigma=id\ {\rm on}\ V_L\otimes W\}\label{Eq:ns0}.
\end{align}
\begin{proposition}\label{L:idh} $\{\sigma\in {\Aut(V)}\mid \sigma=id\ \text{on}\ \h\}=\{\exp(a_{(0)})\mid a\in\h\}.$ 
In particular, $K(V)\subset \Inn(V)$.
\end{proposition}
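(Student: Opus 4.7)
\bigskip
\noindent\textbf{Proof plan for Proposition \ref{L:idh}.}

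The reverse inclusion is immediate: for $a\in\h$ and any $h\in\h$, the bracket $a_{(0)}h=[a,h]=0$ in $V_1$ since $\h$ is abelian, so $\exp(a_{(0)})$ acts trivially on $\h$. The bulk of the work is the forward inclusion. The plan is to peel off the action layer by layer: first adjust $\sigma$ so that it becomes the identity on $V_L$, then show that it must be the identity on $W$ as well, at which point the already-established description of automorphisms that fix $V_L\otimes W$ pointwise forces $\sigma$ to be an exponential of an element of $\h$.

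Let $\sigma\in\Aut(V)$ with $\sigma|_{\h}=id$. Since $W=\Com_V(\h)$ is defined purely in terms of $\h$, $\sigma$ preserves $W$, and hence it also preserves $V_L=\Com_V(W)$. The restriction $\sigma|_{V_L}$ is an automorphism of the lattice VOA $V_L$ fixing its Cartan subalgebra $\h$ pointwise, so by \eqref{Eq:idh} applied to $V_L$ there exists $a\in\h$ with $\sigma|_{V_L}=\exp(a_{(0)})$. Replacing $\sigma$ by $\sigma_1:=\sigma\exp(-a_{(0)})$, we may assume $\sigma_1|_{V_L}=id$. Note that $\sigma_1$ still preserves $W$, so its restriction $\tau:=\sigma_1|_W$ lies in $\Aut(W)$, and $\sigma_1|_{V_L\otimes W}=id\otimes\tau$.

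Next I analyze the effect of $\sigma_1$ on the decomposition \eqref{Eq:V}. The irreducible $V_L$-modules $\{V_{\lambda+L}\}_{\lambda+L\in\mathcal{D}(L)}$ are pairwise non-isomorphic, so the $V_L$-isotypic component of $V$ corresponding to $V_{\lambda+L}$ is precisely the summand $V_{\lambda+L}\otimes\varphi(\lambda+L)$. Because $\sigma_1$ acts trivially on $V_L$, it must preserve each $V_L$-isotypic component, hence each summand $V_{\lambda+L}\otimes\varphi(\lambda+L)$. Since $\sigma_1$ twists the $V_L\otimes W$-module structure on this summand by $id\otimes\tau$, the preserved summand is $V_L\otimes W$-isomorphic to $V_{\lambda+L}\otimes\bigl(\varphi(\lambda+L)\circ\tau\bigr)$. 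Comparing the two descriptions gives $\varphi(\lambda+L)\circ\tau\cong\varphi(\lambda+L)$ as $W$-modules for every $\lambda+L\in\mathcal{D}(L)$. Because $\varphi$ is a bijection onto $\irr(W)$, this means $\tau\in\Aut_0(W)=\ker\mu_W$. The injectivity of $\mu_W$ asserted in Theorem \ref{T:inj} now forces $\tau=id$.

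Consequently $\sigma_1$ fixes $V_L\otimes W$ pointwise, so by \eqref{Eq:ns0} and \eqref{Eq:S*} there is $v+L\in\mathcal{D}(L)$ (with $v\in L^*\subset\h$) such that $\sigma_1=\exp(2\pi\sqrt{-1}\,v_{(0)})$. Therefore
\[
\sigma=\sigma_1\exp(a_{(0)})=\exp\bigl((a+2\pi\sqrt{-1}\,v)_{(0)}\bigr)\in\{\exp(b_{(0)})\mid b\in\h\},
\]
using that $a,v\in\h$ commute so their zero modes commute on $V$. This gives the forward inclusion, completing the first assertion. For the second assertion, $K(V)\subset\{\sigma\in\Aut(V)\mid\sigma|_{\h}=id\}=\{\exp(b_{(0)})\mid b\in\h\}\subset\Inn(V)$. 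The only nontrivial ingredient beyond bookkeeping is the appeal to Theorem \ref{T:inj}; that is where the assumption $0<\rank V_1<24$ (and thus the identification of $W$ with one of the ten orbifold VOAs) enters crucially.
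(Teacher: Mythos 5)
Your proposal is correct and follows essentially the same route as the paper's proof: preservation of $W=\Com_V(\h)$ and $V_L=\Com_V(W)$, reduction via \eqref{Eq:idh} to an inner automorphism on $V_L$, the decomposition \eqref{Eq:V} together with the bijection $\varphi$ to force trivial action on $\irr(W)$, faithfulness from Theorem \ref{T:inj}, and the identification of the pointwise stabilizer of $V_L\otimes W$ via \eqref{Eq:ns0} and \eqref{Eq:S*}. The only cosmetic difference is that you normalize $\sigma$ by $\exp(-a_{(0)})$ before comparing isotypic components, whereas the paper keeps $\sigma$ and invokes Lemma \ref{L:innM}~(1) directly.
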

\begin{proof}
Clearly, $\{\sigma\in \Aut(V)\mid \sigma=id\ \text{on}\ \h\}\supset\{\exp(a_{(0)})\mid a\in\h\}.$ 

Let $\sigma\in\Aut(V)$ such that $\sigma=id$ on $\h$.
Then $\sigma$ preserves the commutant and the double commutant of $\h$, that is, 
$\sigma$ preserves both $V_L$ and $W$.
Since $\sigma_{|V_L}$ acts trivially on $\h\subset(V_L)_1$, we have $\sigma_{|V_L}\in\{\exp(a_{(0)})\mid a\in\h\}$ by \eqref{Eq:idh}.
By Lemma \ref{L:innM} (1), $\sigma_{|V_L}$ acts trivially on  $\irr(V_L)$, and hence $\sigma(V_{\lambda+L}\otimes \varphi({\lambda+L}))=V_{\lambda+L}\otimes \varphi({\lambda+L})$ for all $\lambda+L\in\mathcal{D}(L)$.
Since $\varphi$ is a bijection from $\mathcal{D}(L)$ to $\irr(W)$, $\sigma_{|W}\in\Aut(W)$ also acts trivially on $\irr(W)$.
By Theorem \ref{T:inj}, the action of $\Aut(W)$ on $\irr(W)$ is faithful.
Hence we have $\sigma_{|W}=id$.
It follows from \eqref{Eq:ns0} that $\sigma\in S_\varphi^*\{\exp(a_{(0)})\mid a\in\h\}$.
By \eqref{Eq:S*}, we have $\sigma=\exp(u_{(0)})$ for some $u\in\h$.
\end{proof}

\begin{remark} If $V$ is isomorphic to a Niemeier lattice VOA, then $K(V)\subset \Inn(V)$ by Remark \ref{R:KOknown}.
Hence for any holomorphic VOA $V$ of central charge $24$ with $V_1\neq0$, we have $K(V)\subset\Inn(V)$, which proves the first assertion of Theorem \ref{T:main11}.
\end{remark}

Let $V_1=\g=\bigoplus_{i=1}^s\g_i$, where $\g_i$ are simple ideals, and let $k_i$ be the level of $\g_i$.
Note that $k_i\in\Z_{>0}$ (\cite{DM06}).
The norm of roots in $\g$ is normalized so that $\langle\alpha|\alpha\rangle=2$ for any long roots $\alpha$.

\begin{proposition}\label{L:KV2} Let $Q^i$ be the root lattice of $\g_i$ and set $\tilde{Q}=\bigoplus_{i=1}^s\frac{1}{\sqrt{k_i}}Q^i$.
Then $$K(V)=\{\exp(2\pi\sqrt{-1}v_{(0)})\mid v+L\in \tilde{Q}^*/L\}$$ and it is isomorphic to $L^*/\tilde{Q}$ as a group.
\end{proposition}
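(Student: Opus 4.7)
The plan is to use Proposition \ref{L:idh}, which forces every $\sigma\in K(V)$ to have the form $\sigma=\exp(2\pi\sqrt{-1}v_{(0)})$ for some $v\in\h$. Let $\phi:\h\to\Aut(V)$ be the map $v\mapsto\exp(2\pi\sqrt{-1}v_{(0)})$. Then $K(V)=\phi(S)$, where $S=\{v\in\h:\phi(v)|_{V_1}=\id\}$, and $K(V)\cong S/\ker\phi$. The strategy is to identify $S$ with (the image of) $\tilde{Q}^*$ and $\ker\phi$ with $L$.

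To compute $S$: $\phi(v)$ acts trivially on $\h$ automatically, so $\phi(v)|_{V_1}=\id$ reduces to trivial action on every root vector $e_\alpha$ of $\g=\bigoplus\g_i$. Since $v_{(0)}e_\alpha=\alpha(v)e_\alpha$, the condition is $\alpha(v)\in\Z$ for every root $\alpha$, which places the $\h_i$-projection of $v$ in the abstract coweight lattice $(Q^i)^*$ (the dual of the root lattice $Q^i$ under the abstract form with long roots of norm $2$) for each $i$. Since the VOA form on $\h_i$ equals $k_i$ times the abstract form, there is a natural isometric embedding of the abstract lattice $\tilde{Q}^*=\bigoplus_i\sqrt{k_i}(Q^i)^*$ into $\h$ (with the VOA form) whose image, as a subset of $\h$, is exactly $\bigoplus_i(Q^i)^*$. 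Thus $S$ is this image of $\tilde{Q}^*$.

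To compute $\ker\phi$: using the simple current decomposition $V\cong\bigoplus_{\lambda+L\in\mathcal{D}(L)}V_{\lambda+L}\otimes\varphi(\lambda+L)$ from \eqref{Eq:V}, $\phi(v)$ acts trivially on $W$ (since $v\in\h$ commutes with $W=\Com_V(\h)$) and on $V_L$ iff $v\in L^*$. For such $v$, $\phi(v)$ acts on $V_{\lambda+L}\otimes\varphi(\lambda+L)$ as the scalar $\exp(2\pi\sqrt{-1}\langle v,\lambda\rangle_V)$; this matches the description of $S_\varphi^*\cong L^*/L$ in \eqref{Eq:S*} and \eqref{Eq:ns0}. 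These scalars are all trivial iff $v\in(L^*)^*=L$, so $\ker\phi=L$. Since $v\in L$ implies $\phi(v)=\id$ on $V\supset V_1$, the inclusion $L\subset\tilde{Q}^*$ is automatic, and we obtain $K(V)\cong\tilde{Q}^*/L$.

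The final isomorphism $\tilde{Q}^*/L\cong L^*/\tilde{Q}$ is Pontryagin duality for pairs of dual full-rank sublattices of $\h$: dualizing $L\subset\tilde{Q}^*$ in the VOA form gives $\tilde{Q}=(\tilde{Q}^*)^*\subset L^*$, and the nondegenerate pairing induced by the VOA form yields the canonical group isomorphism. The main technical obstacle is the careful bookkeeping of the $k_i$-rescaling when translating between the abstract normalization of the root system of $\g_i$ (long roots of norm $2$) and the VOA form on $\h$, which is essential for correctly identifying $\tilde{Q}^*$ as a sublattice of $\h$ containing $L$.
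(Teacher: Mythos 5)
Your proof is correct and follows essentially the same route as the paper's: reduce to exponentials $\exp(2\pi\sqrt{-1}v_{(0)})$, $v\in\h$, via Proposition \ref{L:idh}; identify the kernel of $v\mapsto\exp(2\pi\sqrt{-1}v_{(0)})$ with $L$ using the decomposition \eqref{Eq:V}; identify the condition for triviality on $V_1$ with $v\in\tilde{Q}^*$; and conclude by lattice duality. The only (cosmetic) difference is that you verify $v\in\tilde{Q}^*$ by computing the action directly on the root vectors $e_\alpha$ and tracking the $k_i$-rescaling by hand, whereas the paper reads off the weight lattice $\bigoplus_i(1/\sqrt{k_i})Q^i$ from the Dong--Ren decomposition \eqref{Eq:Lg1}; your bookkeeping of the normalization is consistent with the paper's.
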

\begin{proof}
By \eqref{Eq:V} and $\h\subset (V_L)_1$, for $x\in\h$, $\exp(2\pi\sqrt{-1}x_{(0)})=id$ on $V$ if and only if $x\in L$.
By Proposition \ref{L:idh}, we have $K(V)\subset\{\exp(2\pi\sqrt{-1}v_{(0)})\mid v+L\in\h/L\}$.

Recall from \cite{DM06} that the subVOA generated by $V_1$ is isomorphic to $\bigotimes_{i=1}^sL_{\widehat{\g_i}}(k_i,0)$, where $L_{\widehat{\g_i}}(k_i,0)$ is the simple affine VOA associated with $\g_i$ at level $k_i$.
It was proved in \cite{DR} that $L_{\widehat{\mathfrak{g}_{i}}}(k_i,0)$ is a simple current extension of $V_{\sqrt{k_i}Q^i_{long}}\otimes K(\g_i,k_i)$ as follows:
\begin{align} L_{\widehat{\mathfrak{g}_{i}}}(k_i,0)\cong \bigoplus_{\lambda\in (1/\sqrt{k_i})Q^i/\sqrt{k_i}Q^i_{long}}V_{\lambda+\sqrt{k_i}Q^i_{long}}\otimes M^{0,\lambda},\label{Eq:Lg1}
\end{align}
where $Q^i_{long}$ is the sublattice of the root lattice $Q^i$ spanned by long roots, $K(\g_i,k_i)$ is the parafermion VOA and $M^{0,\lambda}$ are certain irreducible $K(\mathfrak{g}_i,k_i)$-modules.

By \eqref{Eq:Lg1}, for $v\in\h$, $\exp(2\pi\sqrt{-1}v_{(0)})=id$ on $V_1$ if and only if 
$v\in \tilde{Q}^*$.
Hence $K(V)=\{\exp(2\pi\sqrt{-1}v_{(0)})\mid v+L\in \tilde{Q}^*/L\}$.
Clearly, this group is isomorphic to the dual $L^*/\tilde{Q}$ of $\tilde{Q}^*/L$.
\end{proof}

\begin{remark}\label{R:shortQi} For a short root $\beta$ in the root lattice $Q^i$ of $\g_i$, we have $\langle \beta|\beta\rangle=2/r_i$, where $r_i$ is the lacing number of $\g_i$.
Hence $Q^i$ is not necessarily even.
\end{remark}

Later, we use the sublattice
\begin{equation}
Q_\g=\bigoplus_{i=1}^s\sqrt{k_i}Q^i_{long}\subset L.\label{Eq:LQl}
\end{equation}
Note that the ranks of both $Q_\g$ and $L$ are equal to $\dim\h$.

\subsection{The group $\Out(V)$}
Let $V$ be a holomorphic VOA of central charge $24$ with $0<\rank V_1<24$.
Let $\h$ be a Cartan subalgebra of $V_1$.
Set $W=\Com_V(\h)$ and $V_L=\Com_V(W)$ as in Section \ref{S:31}.
In this subsection, we describe $\Out(V)$, defined in the introduction, in terms of $V_1$ and $L$.

As discussed in the previous section, $V$ is a simple current extension $V=\bigoplus_{M\in S_\varphi}M$.
Hence the fixed-point subVOA of $S_\varphi^*$ is 
\begin{equation}
V^{S_\varphi^*}=\{v\in V\mid \sigma v=v\quad \text{for all }\sigma\in S_\varphi^*\}=V_L\otimes W.\label{Eq:VS*}
\end{equation}
It follows that the normalizer of $S_\varphi^*$ in $\Aut(V)$ is given by
\begin{align}
N_{\Aut(V)}(S_\varphi^*)=\{\sigma\in\Aut(V)\mid \sigma(V_{L}\otimes W)=V_{L}\otimes W\}.\label{Eq:NS*}
\end{align}
By \cite[Theorem 3.3]{Sh04}, we obtain
\begin{align}
N_{\Aut(V)}(S_\varphi^*)/S_\varphi^*&\cong \Stab_{\Aut(V_{L}\otimes W)}(S_\varphi)=\{\sigma\in \Aut(V_{L}\otimes W)\mid S_\varphi\circ \sigma=S_\varphi\}.\label{Eq:ns}
\end{align}
Recall that $\h$ is the fixed Cartan subalgebra of $V_1$.
Set
$${\rm Stab}_{\Aut(V)}(\h)=\{\sigma\in\Aut(V)\mid \sigma(\h)=\h\},\quad {\rm Stab}_{\Inn(V)}(\h)={\rm Stab}_{\aut(V)}(\h)\cap\Inn(V).$$
\begin{lemma}\label{L:CSA}
\begin{enumerate}[{\rm (1)}]
\item $\aut(V) = \Inn(V){\rm Stab}_{\Aut(V)}(\h)$;
\item $\Out(V)\cong {\rm Stab}_{\Aut(V)}(\h)/{\rm Stab}_{\Inn(V)}(\h)$;
\item $N_{\Aut(V)}(S^*_\varphi)=\Inn(V_L){\rm Stab}_{\Aut(V)}(\h)$.
\end{enumerate}
\end{lemma}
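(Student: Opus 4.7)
The plan is to prove the three parts in order, reducing (1) to the transitivity of inner automorphisms on Cartan subalgebras of the reductive Lie algebra $V_1$, deducing (2) by the standard second isomorphism theorem, and extending the same strategy to (3) after verifying that any element of $N_{\Aut(V)}(S_\varphi^*)$ preserves the two tensor factors $V_L$ and $W$ separately.

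For (1), I would recall that $\Inn(V)$ is generated by $\exp(a_{(0)})$ with $a\in V_1$, and each such element restricts to $\exp(\ad a)\in\Inn(V_1)$ on the reductive Lie algebra $V_1$. Since $\Inn(V_1)$ acts transitively on the set of Cartan subalgebras of $V_1$, for any $\sigma\in\Aut(V)$ I can pick $\tau\in\Inn(V)$ with $\tau\sigma(\h)=\h$; then $\tau\sigma\in\Stab_{\Aut(V)}(\h)$ and $\sigma=\tau^{-1}(\tau\sigma)$, yielding $\Aut(V)=\Inn(V)\Stab_{\Aut(V)}(\h)$. Part (2) follows immediately by the second isomorphism theorem:
\[
\Out(V)=\Aut(V)/\Inn(V)\cong\Stab_{\Aut(V)}(\h)/\bigl(\Stab_{\Aut(V)}(\h)\cap\Inn(V)\bigr)=\Stab_{\Aut(V)}(\h)/\Stab_{\Inn(V)}(\h).
\]

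For (3), I would show the two inclusions separately. The inclusion $\Inn(V_L)\Stab_{\Aut(V)}(\h)\subseteq N_{\Aut(V)}(S_\varphi^*)$ is straightforward: any $\sigma\in\Stab_{\Aut(V)}(\h)$ preserves $W=\Com_V(\h)$ and $V_L=\Com_V(W)$, hence $V_L\otimes W$, and so normalizes $S_\varphi^*$ by \eqref{Eq:NS*}; and $\Inn(V_L)$, viewed in $\Aut(V)$ via $\exp(a_{(0)})$ for $a\in (V_L)_1\subset V_1$, acts trivially on $W=\Com_V(V_L)$ by the commutant property and preserves $V_L$, hence also normalizes $S_\varphi^*$.

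For the converse inclusion, given $\sigma\in N_{\Aut(V)}(S_\varphi^*)$, \eqref{Eq:NS*} gives $\sigma(V_L\otimes W)=V_L\otimes W$. The step I expect to be the main obstacle is to show that $\sigma$ preserves $V_L$ and $W$ separately, not merely their tensor product; for this I would use that $V_L$ is a lattice VOA with $(V_L)_1\supseteq\h\neq 0$ while $W_1=0$, so $V_L\not\cong W$ as simple VOAs and the decomposition of $V_L\otimes W$ into non-isomorphic simple tensor factors is canonical. Once $\sigma|_{V_L}\in\Aut(V_L)$ is in hand, both $\h$ and $\sigma(\h)$ are Cartan subalgebras of the reductive Lie algebra $(V_L)_1$, and since $\Inn(V_L)$ restricts to $\Inn((V_L)_1)$ and acts transitively on its Cartan subalgebras, I can pick $\tau\in\Inn(V_L)$ with $\tau\sigma(\h)=\h$, yielding $\sigma=\tau^{-1}(\tau\sigma)\in\Inn(V_L)\Stab_{\Aut(V)}(\h)$.
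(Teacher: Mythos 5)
Your proposal is correct and follows essentially the same route as the paper: conjugacy of Cartan subalgebras under $\Inn(V_1)$ for (1), the second isomorphism theorem for (2), and the characterization \eqref{Eq:NS*} of $N_{\Aut(V)}(S_\varphi^*)$ for (3). The only difference is that the step you flag as the main obstacle in (3) --- showing $\sigma$ preserves $V_L$ and $W$ separately --- is sidestepped in the paper, which only uses that $\sigma$ preserves $(V_L\otimes W)_1=(V_L)_1\otimes\1$ (immediate since $W_1=0$) and then conjugates $\sigma(\h)$ back to $\h$ inside $(V_L)_1$ by an element of $\Inn(V_L)$.
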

\begin{proof}
 
Let $\sigma\in \aut(V)$. 
Since all Cartan subalgebras of $V_1$ are conjugate under $\Inn(V_1)$, there exists $\tau\in\Inn(V)$ such that $\tau\sigma(\mathfrak{h})= \mathfrak{h}$.
Hence $\tau\sigma\in{\rm Stab}_{\aut(V)}(\h)$, which proves (1). 
Clearly, the assertion (1), Lemma \ref{L:idh} and the definition of $\Out(V)$ imply (2).

It follows from $\Com_V(\h)=W$ and $\Com_V(W)=V_L$ that ${\rm Stab}_{\aut(V)}(\h)$ preserves $V_L\otimes W$.
Hence by \eqref{Eq:NS*}, ${\rm Stab}_{\aut(V)}(\h)\subset N_{\Aut(V)}(S_\varphi^*)$.
In addition, by Lemma \ref{L:innM}, $\Inn(V_L)$ preserves $S_\varphi$.
Hence by \eqref{Eq:ns}, we have $\Inn(V_L)\subset N_{\Aut(V)}(S_\varphi^*)$.
Thus $\Inn(V_L){\rm Stab}_{\Aut(V)}(\h)\subset N_{\Aut(V)}(S^*_\varphi)$.

Let $\sigma\in N_{\Aut(V)}(S_\varphi^*)$.
By \eqref{Eq:NS*}, $\sigma$ preserves $V_L\otimes W$, and therefore also $(V_L\otimes W)_1=(V_L)_1\otimes\1$.
Since $\h$ is a Cartan subalgebra of $(V_L)_1$, there exists $\tau\in \Inn(V_L)$ such that $\tau\sigma(\h)=\h$.
Hence $\sigma\in \Inn(V_L){\rm Stab}_{\Aut(V)}(\h)$.
\end{proof}

\begin{lemma}\label{Lem:Stabh} $\Stab_{\Aut(V)}(\h)/S_\varphi^*\cong \Stab_{\Aut(V_{L}\otimes W)}(S_\varphi)\cap\Stab_{\Aut(V_{L}\otimes W)}(\h)$.
\end{lemma}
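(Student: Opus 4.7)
The plan is to deduce the claim from the isomorphism recorded in \eqref{Eq:ns}, namely $N_{\Aut(V)}(S_\varphi^*)/S_\varphi^*\cong\Stab_{\Aut(V_L\otimes W)}(S_\varphi)$, by identifying $\Stab_{\Aut(V)}(\h)/S_\varphi^*$ with the subgroup of the right-hand side that additionally stabilizes $\h$.

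First I would verify the two sandwiching inclusions $S_\varphi^*\subset\Stab_{\Aut(V)}(\h)\subset N_{\Aut(V)}(S_\varphi^*)$. The first inclusion is immediate from \eqref{Eq:S*}, since each element of $S_\varphi^*$ has the form $\exp(2\pi\sqrt{-1}v_{(0)})$ with $v\in\h$ and hence acts trivially on $\h$. Conversely, any $\sigma\in\Stab_{\Aut(V)}(\h)$ preserves $W=\Com_V(\h)$ and $V_L=\Com_V(W)$, hence $V_L\otimes W$, so \eqref{Eq:NS*} yields $\sigma\in N_{\Aut(V)}(S_\varphi^*)$. Composing these inclusions with the isomorphism of \eqref{Eq:ns} gives an injective homomorphism
$$\Phi\colon\Stab_{\Aut(V)}(\h)/S_\varphi^*\hookrightarrow\Stab_{\Aut(V_L\otimes W)}(S_\varphi),\qquad \sigma S_\varphi^*\mapsto\sigma|_{V_L\otimes W}.$$

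Next I would pin down the image of $\Phi$. The inclusion $\Imm\Phi\subseteq\Stab_{\Aut(V_L\otimes W)}(\h)$ is clear, since if $\sigma$ fixes $\h$ on the whole of $V$ then so does its restriction to $V_L\otimes W$. For the reverse inclusion, given $\tau\in\Stab_{\Aut(V_L\otimes W)}(S_\varphi)\cap\Stab_{\Aut(V_L\otimes W)}(\h)$, the isomorphism \eqref{Eq:ns} supplies some $\sigma\in N_{\Aut(V)}(S_\varphi^*)$ with $\sigma|_{V_L\otimes W}=\tau$; because $\h\subset V_L$ and $\tau(\h)=\h$, the lift $\sigma$ automatically satisfies $\sigma(\h)=\h$, so $\sigma\in\Stab_{\Aut(V)}(\h)$ and $\Phi(\sigma S_\varphi^*)=\tau$.

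I do not anticipate a substantive obstacle: the deep ingredient is the already-established isomorphism \eqref{Eq:ns}, and the remaining observation is simply that $\h$ lies in the tensor factor $V_L$, so that stabilizing $\h$ inside $V$ coincides with stabilizing it upon restriction to $V_L\otimes W$. The ambiguity of the lift $\sigma$ modulo $S_\varphi^*$ is harmless, since $S_\varphi^*\subset\Stab_{\Aut(V)}(\h)$ acts trivially on $\h$ by construction.
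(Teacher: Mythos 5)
Your proposal is correct and follows essentially the same route as the paper: both arguments sandwich $S_\varphi^*\subset\Stab_{\Aut(V)}(\h)\subset N_{\Aut(V)}(S_\varphi^*)$ (the latter inclusion via preservation of the commutant and double commutant of $\h$, which is Lemma \ref{L:CSA}\,(3)), apply the restriction isomorphism \eqref{Eq:ns} together with \eqref{Eq:ns0}, and use the fact that $\h$ sits inside $V_L\otimes W$ so that a lift of an $\h$-stabilizing automorphism again stabilizes $\h$. No gaps.
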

\begin{proof} By Lemma \ref{L:CSA} (3), $\Stab_{\Aut(V)}(\h)\subset N_{\Aut(V)}(S_\varphi^*)$.
By \eqref{Eq:ns}, $\Stab_{\Aut(V)}(\h)/S_\varphi^*\subset \Stab_{\Aut(V_{L}\otimes W)}(S_\varphi)$.
Hence $\Stab_{\Aut(V)}(\h)/S_\varphi^*\subset \Stab_{\Aut(V_{L}\otimes W)}(S_\varphi)\cap\Stab_{\Aut(V_{L}\otimes W)}(\h)$.
For $\sigma\in \Stab_{\Aut(V_{L}\otimes W)}(S_\varphi)\cap\Stab_{\Aut(V_{L}\otimes W)}(\h)$, by \eqref{Eq:ns0} and \eqref{Eq:ns}, there exists $\tilde{\sigma}\in N_{\Aut(V)}(S_\varphi^*)$ such that $\tilde{\sigma}_{|V_L\otimes W}=\sigma$ and $\tilde{\sigma}(\h)=\h$.
Hence $\sigma\in\Stab_{\Aut(V)}(\h)/S_\varphi^*$.
\end{proof}
It follows from $(V_L\otimes W)_1=(V_L)_1\otimes\1$
and $\Com_{V_{L}\otimes W}((V_{L})_1\otimes \1)=\1\otimes W$ that  
\begin{equation*}
\Aut(V_{L}\otimes W)\cong\Aut(V_{L})\times \Aut(W).\label{Eq:aut2}
\end{equation*}
Hence we obtain the group homomorphism \begin{equation}
\Aut(V_L\otimes W)\to  O(\irr(V_L),q_{V_L})\times O(\irr(W),-q_W),\quad \sigma\mapsto (\mu_{V_L}(\sigma_{|V_L}),{\mu}_W(\sigma_{|W})).\label{Eq:AutLW}
\end{equation}
Here we view $\mu_W(\sigma_{|W})\in O(\irr(W),-q_W)$ via $O(\irr(W),q_W)= O(\irr(W),-q_W)$.
By the injectivity of $\mu_W$ (Theorem \ref{T:inj}),
$$\Aut_0(W)=1,\qquad \overline{\Aut}(W)\cong\Aut(W);$$
we often identify $\overline{\Aut}(W)$ with $\Aut(W)$.
Hence the kernel of the homomorphism \eqref{Eq:AutLW} is $\Aut_0({V_L})\times 1$.
By \eqref{Eq:S} and \eqref{Eq:ns}, we have
\begin{align*}
\Stab_{\Aut(V_{L}\otimes W)}(S_\varphi)&\cong (\Aut_0({V_L})\times 1).\{(k,\varphi k\varphi^{-1})\mid k\in\overline{\Aut}(V_L),\ \varphi k\varphi^{-1}\in\overline{\Aut}(W)\}.
\end{align*}
We now identify $(\irr(V_L),q_{V_L})$ with $(\mathcal{D}(L),q_L)$.
Note that $\overline{\Aut}({V_L})\cong\overline{O}({L})$ (see Lemma \ref{L:XL}).
Considering the restriction of $\Stab_{\Aut(V_{L}\otimes W)}(S_\varphi)$ to $V_L$, we have 
\begin{equation}
\Stab_{\Aut(V_{L}\otimes W)}(S_\varphi)\cong \Aut_0({V_L}).(\overline{O}(L)\cap \varphi^{*}(\overline{\Aut}(W))),\label{Eq:StabS}
\end{equation}
where 
$$\varphi^*(\overline{\Aut}(W))=\varphi^{-1}(\overline{\Aut}(W))\varphi\subset O(\mathcal{D}(L),q_L).$$
By Lemma \ref{L:XL}, \eqref{Eq:stabh} and \eqref{Eq:StabS}, we have
\begin{align*}
&\Stab_{\Aut(V_{L}\otimes W)}(S_\varphi)\cap\Stab_{\Aut(V_{L}\otimes W)}(\h)\\\cong& \{\exp(a_{(0)})\mid a\in \h\}\iota^{-1}(O_0(L)).(\overline{O}({L})\cap \varphi^{*}(\overline{\Aut}(W))).
\end{align*}
By \eqref{Eq:exact} and \eqref{Eq:stabhVL}, 
\begin{align}
\begin{split}
\left(\Stab_{\Aut(V_{L}\otimes W)}(S_\varphi)\cap\Stab_{\Aut(V_{L}\otimes W)}(\h)\right)_{|\h}&
\cong O_0(L).(\overline{O}({L})\cap \varphi^{*}(\overline{\Aut}(W)))
\\
&\cong\mu_L^{-1}(\overline{O}({L})\cap \varphi^{*}(\overline{\Aut}(W))).\end{split}\label{Eq:StabS2}
\end{align}

Let $W(V_1)$ denote the Weyl group of the semisimple Lie algebra $V_1$.

\begin{lemma}\label{L:Wg}
\begin{enumerate}[{\rm (1)}]
\item ${\rm Stab}_{\Inn(V)}(\h)/\{\exp(a_{(0)})\mid a\in \h\}\cong {W}(V_1)$.
\item  ${\rm Stab}_{\Aut(V)}(\h)/\{\exp(a_{(0)})\mid a\in \h\}\cong \mu_L^{-1}(\overline{O}({L})\cap \varphi^{*}(\overline{\Aut}(W)))$.
\end{enumerate}
\end{lemma}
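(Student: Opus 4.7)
\bigskip

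\noindent\textbf{Proof plan for Lemma \ref{L:Wg}.}

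The plan is to deduce both assertions from Proposition \ref{L:idh} together with Lemma \ref{Lem:Stabh} and equation \eqref{Eq:StabS2}, after identifying the stabilizer quotients with their images in $GL(\h)$. First, by Proposition \ref{L:idh}, the kernel of the restriction map $\Stab_{\Aut(V)}(\h)\to GL(\h)$ is exactly $\{\exp(a_{(0)})\mid a\in\h\}$. Therefore the quotient ${\rm Stab}_{\Aut(V)}(\h)/\{\exp(a_{(0)})\mid a\in\h\}$ is canonically isomorphic to the image of ${\rm Stab}_{\Aut(V)}(\h)$ in $GL(\h)$, and similarly for the intersection with $\Inn(V)$. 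Thus it suffices to compute these images.

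For part (1), I would argue as follows. Restriction to $V_1$ sends $\exp(a_{(0)})$ with $a\in V_1$ to the inner automorphism $\exp(\ad a)\in\Inn(V_1)$, so ${\Inn}(V)_{|V_1}=\Inn(V_1)$. An element $\sigma\in\Inn(V)$ stabilizes $\h$ if and only if $\sigma_{|V_1}\in\Inn(V_1)$ stabilizes $\h$, and the image of $\Stab_{\Inn(V_1)}(\h)$ in $GL(\h)$ is the Weyl group $W(V_1)$ by the classical theory of reductive Lie algebras. Combined with the kernel computation above, this yields the isomorphism in~(1).

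For part (2), I would combine Lemma \ref{Lem:Stabh} with \eqref{Eq:StabS2}. The inclusion $S_\varphi^*\subset\{\exp(a_{(0)})\mid a\in\h\}$ follows directly from \eqref{Eq:S*} since $v\in L^*\subset\h$. Consequently, taking the further quotient by $\{\exp(a_{(0)})\mid a\in\h\}/S_\varphi^*$ on both sides of the isomorphism in Lemma \ref{Lem:Stabh} yields
\[
\Stab_{\Aut(V)}(\h)/\{\exp(a_{(0)})\mid a\in\h\}\;\cong\;\bigl(\Stab_{\Aut(V_L\otimes W)}(S_\varphi)\cap\Stab_{\Aut(V_L\otimes W)}(\h)\bigr)_{|\h},
\]
where the right-hand side is the image in $GL(\h)$. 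The formula \eqref{Eq:StabS2} identifies this image with $\mu_L^{-1}(\overline{O}(L)\cap\varphi^*(\overline{\Aut}(W)))$, which proves~(2).

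The only slightly delicate point in this plan is verifying cleanly that the kernel of $\Stab_{\Aut(V)}(\h)\to GL(\h)$ coming through Lemma \ref{Lem:Stabh} is indeed $\{\exp(a_{(0)})\mid a\in\h\}/S_\varphi^*$ rather than something larger; but this follows immediately because any $\sigma\in\Stab_{\Aut(V)}(\h)$ acting trivially on $\h$ lies in $\{\exp(a_{(0)})\mid a\in\h\}$ by Proposition \ref{L:idh}, and $S_\varphi^*$ is already contained in this subgroup. Beyond this, the argument is a mechanical packaging of the identifications already established, with the Lie-theoretic description of $\Stab_{\Inn(V_1)}(\h)_{|\h}$ as $W(V_1)$ being the only input from outside the excerpt.
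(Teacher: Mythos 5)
Your proposal is correct and follows essentially the same route as the paper: part (1) is the classical fact that $\Stab_{\Inn(V)}(\h)$ acts on $\h$ as $W(V_1)$ combined with Proposition \ref{L:idh} identifying the kernel of the restriction to $\h$, and part (2) is exactly the combination of Proposition \ref{L:idh}, Lemma \ref{Lem:Stabh} and \eqref{Eq:StabS2} that the paper uses. Your extra care about the kernel of the restriction map (handled via Proposition \ref{L:idh} on the $\Aut(V)$ side rather than on the $V_L\otimes W$ side) is a correct and slightly more explicit packaging of the same argument.
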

\begin{proof}
Since $V_1$ is a semisimple Lie algebra, ${\rm Stab}_{\Inn(V)}(\h)$ acts on $\h$ as $W(V_1)$. 
Hence (1) follows from Proposition \ref{L:idh}.
Combining Proposition \ref{L:idh} and \eqref{Eq:StabS2}, we obtain (2).
\end{proof}

By Lemmas \ref{L:CSA} (2) and \ref{L:Wg}, we obtain the following:

\begin{proposition}\label{P:OV} 
$\Out(V)\cong \mu_L^{-1}(\overline{O}({L})\cap \varphi^{*}(\overline{\Aut}(W)))/W(V_1).$
\end{proposition}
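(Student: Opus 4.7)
The plan is to combine Lemma \ref{L:CSA} (2) with the two parts of Lemma \ref{L:Wg} via the third isomorphism theorem. The substantive analysis -- computing the image of ${\rm Stab}_{\Aut(V)}(\h)$ and of ${\rm Stab}_{\Inn(V)}(\h)$ under restriction to $\h$, by passing through the stabilizer of the totally isotropic subspace $S_\varphi$ and using the decomposition $\Aut(V_L\otimes W)\cong\Aut(V_L)\times\Aut(W)$ -- has already been carried out in setting up Lemma \ref{L:Wg}, so only a direct quotient argument remains.

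First, by Lemma \ref{L:CSA} (2),
$$\Out(V)\cong {\rm Stab}_{\Aut(V)}(\h)/{\rm Stab}_{\Inn(V)}(\h).$$
Set $N:=\{\exp(a_{(0)})\mid a\in\h\}$. By Proposition \ref{L:idh}, $N$ is the kernel of the restriction-to-$\h$ map on ${\rm Stab}_{\Aut(V)}(\h)$, so in particular $N\subset {\rm Stab}_{\Inn(V)}(\h)\subset {\rm Stab}_{\Aut(V)}(\h)$ and $N$ is normal in the whole stabilizer (any $\sigma$ that preserves $\h$ conjugates $\exp(a_{(0)})$ to $\exp((\sigma a)_{(0)})$ with $\sigma a\in\h$). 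The third isomorphism theorem then gives
$$\Out(V)\cong \bigl({\rm Stab}_{\Aut(V)}(\h)/N\bigr)\big/\bigl({\rm Stab}_{\Inn(V)}(\h)/N\bigr).$$
Lemma \ref{L:Wg} (2) identifies the numerator with $\mu_L^{-1}(\overline{O}(L)\cap \varphi^{*}(\overline{\Aut}(W)))$, and Lemma \ref{L:Wg} (1) identifies the denominator with $W(V_1)$.

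The only point requiring care is that these two identifications must be compatible, that is, under the isomorphism in Lemma \ref{L:Wg} (2) the subgroup ${\rm Stab}_{\Inn(V)}(\h)/N$ is carried to $W(V_1)\subset \mu_L^{-1}(\overline{O}(L)\cap \varphi^*(\overline{\Aut}(W)))$. This is automatic because both isomorphisms are induced by the same restriction map $\sigma\mapsto \sigma|_\h$, and an inner automorphism of $V$ preserving $\h$ indeed acts on $\h$ as an element of the Weyl group of $V_1$ which must lie in $\overline{O}(L)\cap\varphi^{*}(\overline{\Aut}(W))$ (inner automorphisms preserve $V_L\otimes W$ and fix every class in $\irr(V_L)$ and $\irr(W)$ by Lemma \ref{L:innM} (1)). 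There is no genuine obstacle: all the heavy lifting -- the identification of the fixed subVOA $V^{S_\varphi^*}=V_L\otimes W$, the normalizer computation \eqref{Eq:ns}, and the passage through $\varphi^*(\overline{\Aut}(W))$ -- is already contained in the preparatory lemmas, so the proof reduces to this short quotient manipulation.
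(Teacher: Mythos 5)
Your proof is correct and follows the paper's own route: the paper deduces Proposition \ref{P:OV} directly from Lemma \ref{L:CSA} (2) together with Lemma \ref{L:Wg}, exactly as you do. Your additional remarks on the normality of $\{\exp(a_{(0)})\mid a\in\h\}$ and the compatibility of the two identifications under the restriction-to-$\h$ map simply make explicit what the paper leaves implicit.
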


As a corollary, we obtain
\begin{align}
\begin{split}
|O(L)/W(V_1):\Out(V)|&=|O(L):\mu_L^{-1}(\overline{O}({L})\cap \varphi^{*}(\overline{\Aut}(W)))|\\
&=|\overline{O}(L):(\overline{O}({L})\cap \varphi^{*}(\overline{\Aut}(W)))|.
\end{split}\label{Eq:OV}
\end{align}

Moreover, we obtain the following:

\begin{lemma}\label{L:Out} 
Assume that the conjugacy class of $g\in O(\Lambda)$ is neither $2C$, $6G$ nor $10F$ and that $\overline{O}(L)$ and $\varphi^{*}(\overline{\Aut}(W))$ generate $O(\mathcal{D}(L),q_L)$.
Then 
$$|O(L)/W(V_1):\Out(V)|=|O(\irr(W),q_W):\overline{\Aut}(W)|.$$
In particular, 
\begin{enumerate}[{\rm (1)}]
\item if the conjugacy class of $g$ is $2A$ or $6E$, then $\Out(V)\cong O(L)/W(V_1)$;
\item if the conjugacy class of $g$ is $3B$, $4C$, $7B$ or $8E$, then $\Out(V)\cong O(L)/\langle W(V_1),-1\rangle$.
\end{enumerate}
\end{lemma}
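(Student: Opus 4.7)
My plan is to combine the index formula \eqref{Eq:OV} with Lemma \ref{Lem:center} and the second isomorphism theorem. Starting from
$$|O(L)/W(V_1):\Out(V)|=|\overline{O}(L):\overline{O}(L)\cap\varphi^{*}(\overline{\Aut}(W))|,$$
the first step is to exploit Lemma \ref{Lem:center}(1): since the class of $g$ is not $2C$, $6G$, or $10F$, $\overline{\Aut}(W)$ is normal in $O(\irr(W),q_W)$, and transporting this through the isometry $\varphi$ makes $\varphi^{*}(\overline{\Aut}(W))$ a normal subgroup of $O(\mathcal{D}(L),q_L)$. The hypothesis that $\overline{O}(L)$ and $\varphi^{*}(\overline{\Aut}(W))$ generate $O(\mathcal{D}(L),q_L)$ then gives $\overline{O}(L)\cdot\varphi^{*}(\overline{\Aut}(W))=O(\mathcal{D}(L),q_L)$, and the second isomorphism theorem produces
$$\overline{O}(L)/(\overline{O}(L)\cap\varphi^{*}(\overline{\Aut}(W)))\cong O(\mathcal{D}(L),q_L)/\varphi^{*}(\overline{\Aut}(W)),$$
which, via the orthogonal isomorphism induced by $\varphi$, has order $|O(\irr(W),q_W):\overline{\Aut}(W)|$. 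This establishes the main index identity.

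For (1), when $g$ is of class $2A$ or $6E$, Table \ref{Table:main} records this index as $1$, so $\mu_L^{-1}(\overline{O}(L)\cap\varphi^{*}(\overline{\Aut}(W)))$ already equals $O(L)$ and Proposition \ref{P:OV} immediately yields $\Out(V)\cong O(L)/W(V_1)$.

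The interesting case is (2), where Table \ref{Table:main} gives index $2$. Setting $H=\mu_L^{-1}(\overline{O}(L)\cap\varphi^{*}(\overline{\Aut}(W)))$, so that $[O(L):H]=2$ and $\Out(V)\cong H/W(V_1)$, I would invoke Lemma \ref{Lem:center}(2), which for the classes $3B,4C,7B,8E$ guarantees $-1\notin\overline{\Aut}(W)$, hence $-1\notin\varphi^{*}(\overline{\Aut}(W))$. Since $\mu_L(-1)$ acts as the $-1$-involution on $\mathcal{D}(L)$, this forces $-1\in O(L)\setminus H$, so $O(L)=H\cup(-1)H$ and in particular $-1\notin W(V_1)$. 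The last step is to build the isomorphism: using centrality of $-1$ in $O(L)$, the assignment $\phi(g)=gW(V_1)$ for $g\in H$ and $\phi(g)=(-g)W(V_1)$ for $g\in(-1)H$ is a well-defined surjective homomorphism $O(L)\to H/W(V_1)$ whose kernel is exactly $\langle W(V_1),-1\rangle$, yielding $\Out(V)\cong O(L)/\langle W(V_1),-1\rangle$.

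The only real obstacle lies in the final homomorphism construction of (2); otherwise the argument is a clean structural application of the second isomorphism theorem, resting entirely on the normality and the non-containment of $-1$ supplied by Lemma \ref{Lem:center}, together with the indices tabulated in Table \ref{Table:main}.
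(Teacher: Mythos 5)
Your proposal is correct and takes essentially the same route as the paper's proof: both derive the index identity from \eqref{Eq:OV}, the normality supplied by Lemma \ref{Lem:center}(1), the generation hypothesis, and the second isomorphism theorem, and both settle (2) by observing via Lemma \ref{Lem:center}(2) that the $-1$-isometry lies in $\overline{O}(L)$ but not in $\varphi^{*}(\overline{\Aut}(W))$. Your explicit construction of the final isomorphism $O(L)/\langle W(V_1),-1\rangle\cong H/W(V_1)$ merely spells out a step the paper leaves implicit.
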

\begin{proof} By Lemma \ref{Lem:center} (1), $\overline{\Aut}(W)$ is normal in $O(\irr(W),q_W)$.
The equation \eqref{Eq:OV} and the group isomorphism theorem show 
\begin{align*}
|O(L)/W(V_1):\Out(V)|
=|O(\mathcal{D}(L),q_L):\varphi^{*}(\overline{\Aut}(W))|=|O(\irr(W),q_W):\overline{\Aut}(W)|.
\end{align*}
The assertion (1) follows from $O(\irr(W),q_W)=\overline{\Aut}(W)$ in Table \ref{Table:main}.
The assertion (2) follows from Lemma \ref{Lem:center} (2), Table \ref{Table:main} and the fact that the $-1$-isometry in $O(L)$ gives the $-1$-isometry in $\overline{O}(L)$.
\end{proof}

\subsection{Weight one Lie algebra structures and orbit lattices}
Let $V$ be a strongly regular holomorphic VOA of central charge $24$ with $0<\rank V_1<24$.
Then $\g=V_1$ is semisimple. Let $\g=\bigoplus_{i=1}^s\g_{i,k_i}$, where $\g_{i,k_i}$ is a simple ideal with level $k_i\in\Z_{>0}$.

\begin{remark} The level $k_i$ of a simple ideal $\g_i$ of $V_1$ is determined by the 
following formula in \cite{Sc93,DM04a}:
\begin{equation}
\frac{h^\vee_i}{k_i}=\frac{\dim V_1-24}{24},\label{Eq:DM04a}
\end{equation}
where $h^\vee_i$ is the dual Coxeter number of $\g_i$.
\end{remark}

Let $\mathfrak{h}$ be a Cartan subalgebra of $V_1$.
By Theorem \ref{T:Ho}, $W=\Com_V(\h)\cong V_{\Lambda_g}^{\hat{g}}$ for some 
$g\in O(\Lambda)$ belonging to the $10$ conjugacy classes.
In addition, $\Com_V(W)\cong V_{L}$ for some even lattice $L$.
In this subsection, we describe some properties of $L$ by using $\g=V_1$.

Set $\ell=2|g|$ if $g\in 2C,6G,10F$ and $\ell=|g|$ otherwise (cf.\ Remark \ref{R:ell}).
By Lemma \ref{L:level}, $L$ has level $\ell$, and $\sqrt{\ell}L^*$ is an even lattice.

\begin{proposition}\label{P:Ug} 
Let $Q^i$ be the root lattice of $\g_i$; 
here the norm of roots in $\g_i$ is normalized so that $\langle \alpha|\alpha\rangle=2$ for any long roots $\alpha$ (cf. Remark \ref{R:shortQi}).
Then the even lattice $U=\sqrt{\ell}L^*$ contains 
\begin{equation}
P_\g=\bigoplus_{i=1}^s\frac{\sqrt{\ell}}{\sqrt{k_i}}Q^i\label{Eq:Ug}
\end{equation}
and $\rank U=\rank P_\g$.
Moreover, 
if the vector $v=\dfrac{\sqrt{\ell}}{\sqrt{k_i}}\beta$ associated with a root $\beta$ of $\g_i$ is primitive in $U$, then $v$ is a root of $U$.
\end{proposition}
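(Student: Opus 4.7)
The plan is to derive all three assertions from the structure of the affine subVOA generated by $V_1$ together with the decomposition of $V$ as a $V_L\otimes W$-module. The key input is the parafermion decomposition \eqref{Eq:Lg1}, which presents $L_{\widehat{\g_i}}(k_i,0)$ as a simple current extension of $V_{\sqrt{k_i}Q^i_{long}}\otimes K(\g_i,k_i)$ graded by $(1/\sqrt{k_i})Q^i/\sqrt{k_i}Q^i_{long}$; in particular, every $\h_i$-weight appearing in $L_{\widehat{\g_i}}(k_i,0)$ lies in $(1/\sqrt{k_i})Q^i$ once $\h_i$ is identified with its dual via the VOA/lattice inner product.

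To prove $P_\g\subset U$, I first identify each $\h_i\subset \h\subset L\otimes\C$ using the invariant bilinear form on $V$. Since this form restricts to $k_i$ times the $\g_i$-normalized form on $\h_i$ (in which long roots have norm $2$), a root $\beta\in Q^i$ corresponds, after passing through the identification $\h_i^*\simeq \h_i$ given by the VOA form, to the vector $(1/\sqrt{k_i})\beta\in \h_i\subset L\otimes\C$ (which has standard norm $2/k_i$ when $\beta$ is long). Because $V$ decomposes as in \eqref{Eq:V}, every $\h$-weight of $V$, and in particular every weight coming from $V_1$, must lie in $L^*$. Taking the direct sum over $i$ and rescaling by $\sqrt{\ell}$ yields $P_\g\subset U$. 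The rank equality is immediate from $\rank P_\g=\sum_i\rank Q^i=\rank\g=\dim\h=\rank L=\rank U$.

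For the final assertion, let $\beta\in Q^i$ be a root of $\g_i$. The Weyl reflection $\sigma_\beta\in W(\g_i)\subset W(V_1)$ is realized, via Lemma \ref{L:Wg}(1), by an element of $\Stab_{\Inn(V)}(\h)$, which must preserve the set of $\h$-weights of $V$ and therefore preserves $L^*$ and $L$. Its action on $\h$ is by reflection along the line $\R\beta\subset\h_i$; since this reflection is unchanged upon rescaling the defining vector, this element acts on $U=\sqrt{\ell}L^*$ as $\sigma_v$ with $v=(\sqrt{\ell}/\sqrt{k_i})\beta$. Hence $\sigma_v\in O(L)=O(U)$, and combined with the assumed primitivity of $v$ in $U$, we conclude that $v$ is a root of $U$ in the sense recalled in Section \ref{S:lattice}.

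The main obstacle is the careful bookkeeping of normalizations: the $\g_i$-normalized form on $\h_i$ (where long roots have norm $2$) differs from the invariant VOA form by a factor $k_i$, so one must be consistent about whether a root $\beta$ of $\g_i$ is viewed as a functional in $\h_i^*$, as a vector in $\h_i$ under the normalized form, or as the scaled vector $(1/\sqrt{k_i})\beta$ in $\h_i\subset L\otimes\C$ under the lattice form. Once this identification is fixed, the three assertions reduce to direct consequences of \eqref{Eq:Lg1}, a dimension count, and the Weyl-group realization in Lemma \ref{L:Wg}(1), respectively.
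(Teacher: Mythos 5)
Your proposal is correct, and for the first two assertions it follows the paper's own route: the weights of the subVOA $\langle V_1\rangle$ with respect to the lattice form are exactly $\bigoplus_i(1/\sqrt{k_i})Q^i$ by \eqref{Eq:Lg1}, and \eqref{Eq:V} forces all $\h$-weights of $V$ into $L^*$, giving $P_\g\subset U$; the rank count is the same. For the final assertion, however, you take a genuinely different path. The paper argues purely at the lattice level: writing $r_\beta\beta\in Q^i_{long}$ and using $\sqrt{k_i}Q^i_{long}\subset L$ (cf.\ \eqref{Eq:LQl}), it computes directly that $v\in\tfrac{\langle v|v\rangle}{2}U^*$, so $\sigma_v$ preserves $U$. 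You instead realize $\sigma_\beta\in W(V_1)$ by an element $\tau\in{\rm Stab}_{\Inn(V)}(\h)$ via Lemma \ref{L:Wg}(1); since $\tau$ preserves $\h$ it preserves $W=\Com_V(\h)$ and $V_L=\Com_V(W)$, and by \eqref{Eq:stabhVL} its action on $\h$ lies in $O(L)=O(U)$, while on $\h$ it coincides with $\sigma_v$ because reflections depend only on the line $\R\beta$. Both arguments are valid and noncircular (Lemma \ref{L:Wg} precedes this proposition and is independent of it). The paper's computation buys slightly more — it exhibits the explicit containment $v\in\tfrac{\langle v|v\rangle}{2}U^*$, matching Lemma \ref{RL}(1), without invoking the automorphism group of $V$ — whereas your argument is softer and avoids the bookkeeping with the lacing number $r_\beta$ and the sublattice $Q_\g$, at the cost of relying on the realization of the Weyl group inside $\Inn(V)$ and on automorphisms of $V$ stabilizing $\h$ inducing isometries of $L$. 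If you keep your version, you should state explicitly that $\tau$ restricts to an isometry of $(\h,\langle\ |\ \rangle)$ preserving $L$ (which is exactly \eqref{Eq:stabhVL} applied to $\tau_{|V_L}$), since that is the one step your write-up leaves slightly implicit.
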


\begin{proof} Recall that the ratio of the normalized killing form on $\g_i$ and the bilinear form $\langle\ |\ \rangle$ on $L\subset\h$ is $k_i$.
Hence $\bigoplus_{i=1}^s(1/\sqrt{k_i})Q^i$ 	is the set of weights for $\h$ of the subVOA generated by $V_1$ with respect to the bilinear form $\langle\ |\ \rangle$ (see also \eqref{Eq:Lg1}).  
By \eqref{Eq:V}, we have $(1/\sqrt{k_i})Q^i\subset L^*$, which shows the former assertion (cf. the proof of Proposition \ref{L:KV2}).

Set $r_\beta=1$ (resp. $r_\beta=r_i)$ if $\beta$ is long (resp. short), where $r_i$ is the lacing number of $\g_i$.
Then $r_\beta\beta$ belongs to the even lattice $Q^i_{long}$ generated by long roots of $Q^i$, and $\sqrt{k_i}r_\beta\beta\in \sqrt{k_i}Q^i_{long}\subset L$ (see \eqref{Eq:LQl}).
In addition, $\langle v|v\rangle/2=\ell/(k_ir_\beta)$.
Hence $$v=\frac{\ell}{k_ir_\beta}\frac{1}{\sqrt{\ell}}\sqrt{k_i}r_\beta\beta\in\frac{\langle v|v\rangle}{2}\frac{1}{\sqrt{\ell}}L=\frac{\langle v|v\rangle}{2}U^*.$$
Thus the reflection $\sigma_v$ preserves $U$, and $v$ is a root of $U$.
\end{proof}

\begin{remark}\label{R:Pg} The lattice $P_\g$ is equal to $\sqrt{\ell}\tilde{Q}$, where $\tilde{Q}$ is defined in Proposition \ref{L:KV2}.
\end{remark}

By the classification of irreducible $W$-modules (cf.\ \cite{Lam20}), we obtain the following lemma:

\begin{lemma}\label{Lem:cwtW} {\rm (cf.\ \cite{Lam20})} Assume that the conjugacy class of $g$ is $2A,3B,5B$ or $7B$.
Let $M$ be an irreducible $W$-module.
If $M$ is not isomorphic to $W$, then $\rho(M)\ge (\ell-1)/\ell$ and $\rho(M)\in(1/\ell)\Z$.
Moreover, if $\rho(M)\in (\ell-1)/\ell+\Z$, then $\rho(M)=(\ell-1)/\ell$.
\end{lemma}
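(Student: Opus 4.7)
The plan is to deduce the lemma from the explicit classification of irreducible $W$-modules in \cite{Lam20}. In each of the four cases $2A,3B,5B,7B$, $\ell=|g|$ is prime and $g$ acts fixed-point-freely on $\Lambda_g$, so $\hat g$ has order $\ell$, and every irreducible $W$-module arises as a $\hat g$-eigencomponent of one of the following parent modules: (a) an irreducible untwisted $V_{\Lambda_g}$-module $V_{\lambda+\Lambda_g}$ with $\lambda+\Lambda_g\in \mathcal{D}(\Lambda_g)$, or (b) an irreducible $\hat g^{i}$-twisted $V_{\Lambda_g}$-module for some $i\in\{1,\dots,\ell-1\}$. Passing to a $\hat g$-eigencomponent preserves conformal weights, so it suffices to bound the conformal weights of the parent modules. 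The assertion $\rho(M)\in(1/\ell)\Z$ already follows from Lemma \ref{L:level}: since $\Lambda_g$ has level $\ell$ and the classification identifies $q_W(\irr(W))\subset(1/\ell)\Z/\Z$ in the prime case, combined with the general rationality of conformal weights.

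For the twisted sectors, I would invoke the standard vacuum-anomaly formula
\[
\rho_i=\frac{1}{4\ell^{2}}\sum_{k=1}^{\ell-1}k(\ell-k)\dim_{\C}(\Lambda_g\otimes\C)^{(k)},
\]
where $(\Lambda_g\otimes\C)^{(k)}$ denotes the $e^{2\pi\sqrt{-1}k/\ell}$-eigenspace of $g$. Fixed-point-freeness and primality of $\ell$ force each non-trivial eigenspace to have dimension $\rank\Lambda_g/(\ell-1)$, and the identity $\sum_{k=1}^{\ell-1}k(\ell-k)=\ell(\ell^{2}-1)/6$ then yields
\[
\rho_i=\frac{(\ell+1)\rank\Lambda_g}{24\ell},
\]
independently of $i$. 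Substituting $\rank\Lambda_g=8,12,16,18$ respectively produces the uniform value $\rho_i=(\ell-1)/\ell$. Every irreducible $\hat g^{i}$-twisted $V_{\Lambda_g}$-module has conformal weight equal to $\rho_i+\langle\mu|\mu\rangle/2$ for $\mu$ ranging in a specific coset inside a rescaled projection of $\Lambda_g^{*}$ (as described in \cite{Lam20}); here $\langle\mu|\mu\rangle/2\in(1/\ell)\Z_{\geq 0}$. This immediately delivers the bound $\rho(M)\geq(\ell-1)/\ell$ in the twisted sectors.

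For the untwisted sector, $\rho(V_{\lambda+\Lambda_g})=\min_{\nu\in\lambda+\Lambda_g}\langle\nu|\nu\rangle/2\in(1/\ell)\Z$, and one needs to see that the minimum exceeds $(\ell-1)/\ell$ whenever $\lambda\notin\Lambda_g$. Because $\Lambda_g$ is the coinvariant lattice of an isometry of the Leech lattice $\Lambda$, it has minimum norm at least $4$, and its explicit description in each of the four cases (as recalled in Section \ref{S:5}: $\sqrt{2}E_{8}$ for $2A$, the relevant scalings of the Coxeter–Todd type lattices for $3B$, $5B$, and $7B$) reduces the bound to a short direct check on coset minima, yielding $\rho\ge(\ell-1)/\ell$.

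Finally, the refinement that $\rho(M)\equiv(\ell-1)/\ell\pmod\Z$ forces $\rho(M)=(\ell-1)/\ell$ is verified sector by sector. In the twisted sectors, $\rho(M)=(\ell-1)/\ell+\langle\mu|\mu\rangle/2$ is congruent to $(\ell-1)/\ell$ modulo $\Z$ exactly when $\langle\mu|\mu\rangle/2\in\Z$, so one must rule out $\langle\mu|\mu\rangle\ge 2$ with $\langle\mu|\mu\rangle\in 2\Z$ on the relevant coset; this is done by identifying the coset-minimum vector from \cite{Lam20} and noting that it has norm strictly less than $2$ in each case, so any integer value of $\langle\mu|\mu\rangle/2$ must in fact be $0$. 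In the untwisted sector, $\rho\in(1/\ell)\Z$ with $\rho\equiv(\ell-1)/\ell\pmod\Z$ and $\rho\geq(\ell-1)/\ell$ leaves only the options $(\ell-1)/\ell$ and $\{(\ell-1)/\ell+k\}_{k\geq 1}$, and the latter are excluded by inspecting the coset representatives of $\Lambda_g^{*}/\Lambda_g$ of smallest norm. I expect this last refinement to be the main obstacle, since it resists a uniform structural argument and demands a direct examination of each of the four coinvariant lattices; however, the lattices are small and explicit enough for the verification to be routine.
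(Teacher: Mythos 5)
The paper does not actually prove this lemma; it reads it off from the classification of irreducible $W$-modules in \cite{Lam20}, so reconstructing that classification is the right route, and your central computation is correct: for a fixed-point-free isometry of prime order $\ell$ the twisted-sector vacuum anomaly is $\frac{(\ell+1)\rank\Lambda_g}{24\ell}$, and the four pairs $(\ell,\rank\Lambda_g)=(2,8),(3,12),(5,16),(7,18)$ all give exactly $(\ell-1)/\ell$. The untwisted bound via coset minima of $\Lambda_g^*/\Lambda_g$ is also the right check (note that ``minimum norm at least $4$'' concerns $\Lambda_g$ itself and is only relevant for the nontrivial $\hat g$-eigencomponents of $V_{\Lambda_g}$ itself, which then have weight $\ge 1$; for $\lambda\notin\Lambda_g$ it is the coset minima that matter, as you say).

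The gap is in the sentence ``passing to a $\hat g$-eigencomponent preserves conformal weights'' and in the twisted-sector treatment of the ``moreover'' clause. An eigencomponent's lowest weight can strictly exceed the parent module's lowest weight --- it is the smallest weight at which that eigenvalue of $\hat g$ occurs --- and this is precisely what is at issue in the ``moreover'' statement. Your argument bounds the weights of the parent modules, but the parent $\hat g^i$-twisted module has nonzero graded pieces at $(\ell-1)/\ell+m$ for every integer $m\ge1$ (apply twisted oscillator monomials of integral total degree), so one must show that every eigenvalue of $\hat g$ occurring in those pieces already occurs on the bottom piece at weight $(\ell-1)/\ell$. Your $\mu$-coset argument does not address this: for fixed-point-free $g$ there is no momentum contribution in the twisted sector (the projection of $\Lambda_g^{*}$ to the fixed subspace is zero), so the description $\rho_i+\langle\mu|\mu\rangle/2$ does not apply verbatim, and the extra weights come from twisted oscillators together with the $\hat g$-action on the finite-dimensional defect-group module at the bottom. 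The same issue arises for the $g$-stable cosets in the untwisted sector, where it is resolved by noting that $g$ acts with free orbits of length $\ell$ on the minimal vectors of each coset, so all $\ell$ eigenvalues of $\hat g$ already occur on the bottom level. Both points are true and follow from the explicit module structure in \cite{Lam20}, but they need to be argued; as written, the ``moreover'' clause is not established.
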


\begin{proposition}\label{P:root} 
Assume that 
the conjugacy class of $g$ is $2A$, $3B$, $5B$ or $7B$.
Then $U=\sqrt{\ell}L^*$ is a level $\ell$ lattice.
Moreover, the root system of $U$ and the root system of the semisimple Lie algebra $V_1=\g$ 
have the same type.
In particular, the sublattice $P_\g$ in \eqref{Eq:Ug} of $U$ is generated by roots of $U$.
\end{proposition}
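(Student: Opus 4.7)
The plan is to handle the two claims in turn. The level-$\ell$ assertion for $U=\sqrt{\ell}L^*$ follows immediately from Lemma \ref{L:level}, since $\rank L=\rank V_1=24-c_W=24-\rank\Lambda_g$ satisfies the rank hypothesis there. With $\ell\in\{2,3,5,7\}$ prime, Lemma \ref{RL}(2) then yields
\[R(U)=\{v\in U\mid\langle v|v\rangle=2\}\cup\{v\in \sqrt{\ell}L\mid\langle v|v\rangle=2\ell\},\]
using $\ell U^*=\sqrt{\ell}L$. My task is then to match this set, via the scaling $v=\sqrt{\ell}\mu$, with the roots of $\g=V_1$ sitting in $\h$ via the VOA inner product.

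The heart of the argument is to compute the nonzero $\h$-weights of $V_1$ from the decomposition \eqref{Eq:V} of $V$ over $V_L\otimes W$. For any nontrivial coset $[\mu]\in \mathcal{D}(L)\setminus\{L\}$, evenness of $U$ forces every nonzero element of $[\mu]\subset L^*$ to have norm in $(2/\ell)\Z_{>0}$, so $\rho(V_{[\mu]})\ge 1/\ell$; meanwhile Lemma \ref{Lem:cwtW} gives $\rho(\varphi([\mu]))\ge(\ell-1)/\ell$ with conformal weights of $\varphi([\mu])$ in $(1/\ell)\Z_{\ge 0}$. Combined with the integrality $\rho(V_{[\mu]})+\rho(\varphi([\mu]))\in\Z$ inherited from $S_\varphi$ being totally isotropic, together with the $\Z_{\ge 0}$-ladder structure of conformal weights above each minimum, the only tensor of total conformal weight $1$ in the $[\mu]$-block is $(V_{[\mu]})_{1/\ell}\otimes\varphi([\mu])_{(\ell-1)/\ell}$, and this also forces $\rho(V_{[\mu]})=1/\ell$. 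Such a weight-one vector $e^{\mu'}\otimes y$ with $\mu'\in[\mu]$, $\langle\mu'|\mu'\rangle=2/\ell$ has $\h$-weight $\mu'$. Combined with $(V_L)_1$ from the trivial coset, whose nonzero $\h$-weights are the norm-$2$ vectors of $L$, this shows the set of nonzero $\h$-weights of $V_1$ is exactly
\[R_\g=\{\beta\in L\mid \langle\beta|\beta\rangle=2\}\cup\{\mu\in L^*\mid\langle\mu|\mu\rangle=2/\ell\},\]
the two pieces being automatically disjoint since $L$ is even.

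Since $V_1=\g$ is semisimple, $R_\g$ coincides with the root system of $\g$ realized in $\h$. Multiplying by $\sqrt{\ell}$ identifies $R_\g$ term by term with the description of $R(U)$ above, so the natural bijection $R_\g\leftrightarrow R(U)$ is a uniform rescaling of ambient inner products; because such rescaling preserves Cartan integers, the root systems of $U$ and of $\g$ share the same abstract type. The roots of each simple ideal $\g_i$ generate its normalized root lattice $Q^i$, so dividing by $\sqrt{k_i}$ gives $\Z R_\g=\bigoplus_i(1/\sqrt{k_i})Q^i$, whence $\Z R(U)=\sqrt{\ell}\,\Z R_\g=P_\g$, establishing the last assertion. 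The main obstacle will be the conformal-weight bookkeeping in the second paragraph, which shuts off every contribution to $V_1$ beyond the minimal one $(V_{[\mu]})_{1/\ell}\otimes\varphi([\mu])_{(\ell-1)/\ell}$; this is precisely where primality of $\ell$ enters essentially, through Lemma \ref{Lem:cwtW}.
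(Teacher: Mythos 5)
Your proposal is correct and follows essentially the same route as the paper: Lemma \ref{L:level} for the level, the weight-one decomposition of $V$ over $V_L\otimes W$ via Lemma \ref{Lem:cwtW} to identify the nonzero $\h$-weights of $V_1$ with $\{\alpha\in L\mid\langle\alpha|\alpha\rangle=2\}\cup\{\alpha\in L^*\mid\langle\alpha|\alpha\rangle=2/\ell\}$, and Lemma \ref{RL}(2) to recognize this set (rescaled by $\sqrt{\ell}$) as $R(U)$. The only cosmetic difference is at the final step, where you invoke invariance of Cartan integers under componentwise rescaling while the paper appeals to primitivity and Proposition \ref{P:Ug}; both are valid.
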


\begin{proof}
By Lemma \ref{L:level}, $U$ has level $\ell$.
Since $W_1=0$ and $\ell$ is prime,
by \eqref{Eq:V} and Lemma \ref{Lem:cwtW}, we have
\begin{equation}
V_1=(V_{L})_1\otimes \1\oplus \bigoplus_{\min(\lambda+L)=2/\ell} (V_{\lambda+L})_{1/\ell}\otimes \varphi({\lambda+L})_{1-1/\ell},\label{Eq:wt1V}
\end{equation}
where $\min(\lambda+L)=\min\{\langle x|x\rangle\mid x\in \lambda+L\}$.
Then the roots of $V_1$ with respect to $\h$ are given by $$\{\alpha\in L\mid \langle\alpha|\alpha\rangle=2\}\cup\{\alpha\in L^*\mid \langle\alpha|\alpha\rangle=2/\ell\}.$$
We can rewrite it as follows:
$$\{\alpha\in \ell U^*\subset U\mid \langle\alpha|\alpha\rangle=2\ell\}\cup\{\alpha\in U\mid \langle \alpha|\alpha\rangle=2\}.$$
By Lemma \ref{RL}, this set is $R(U)$.
Since $\ell$ is prime, for any root $\beta\in Q^i$, the vector $(\sqrt{\ell}/\sqrt{k_i})\beta$ is primitive in 
$U=\sqrt{\ell}L^*$.
Hence by Proposition \ref{P:Ug}, the root systems of $V_1$ and $R(U)$ 
have the same type.
The last assertion also follows from Proposition \ref{P:Ug}.
\end{proof}

\subsection{Schellekens' list and isometries of the Leech lattice}\label{S:Sch}
Let $V$ be a holomorphic VOA of central charge $24$ such that $0<\rank V_1<24$.  
Set $\g=V_1$.
Then $\g$ is one of $46$ semisimple Lie algebras in Schellekens' list (\cite{Sc93}).
Let $\h$ be a Cartan subalgebra of $\g$.
By Theorem \ref{T:Ho}, $W=\Com_V(\h)\cong V_{\Lambda_g}^{\hat{g}}$ for some 
$g\in O(\Lambda)$ belonging to the $10$ conjugacy classes.
In addition, the conjugacy class of $g$ is uniquely determined by $\g$, which is summarized in Table \ref{T:Lie} (see also \cite[Tables 6--15]{Ho2} and \cite[Table 2]{ELMS}).
Here the symbol $X_{n,k}$ denotes (the type of) a simple Lie algebra whose type is $X_n$ and level is $k$.

{\small
\begin{longtable}{|c|c|c|c|c|}
\caption{Weight one Lie algebras of holomorphic VOAs of central charge $24$ associated with $V_{\Lambda_g}^{\hat{g}}$ for $g\in O(\Lambda)$}\label{T:Lie}
\\ \hline 
Genus&Class& $\#$ of $L$& $\#$ of $V_1$& Weight one Lie algebra structures\\ \hline
$B$&$2A$& $17$&$17$&$A_{1,2}^{16}$, $A_{3,2}^4A_{1,1}^4$, $D_{4,2}^2B_{2,1}^4$, $A_{5,2}^2C_{2,1}A_{2,1}^2$, $D_{5,2}^2C_{2,1}A_{2,1}^2$, $A_{7,2}C_{3,1}^2A_{3,1}$,\\
&&&& $C_{4,1}^4$, $D_{6,2}C_{4,1}B_{3,1}^2$, $A_{9,2}A_{4,1}B_{3,1}$, $E_{6,2}C_{5,1}A_{5,1}$, $D_{8,2}B_{4,1}^2$, $C_{6,1}^2B_{4,1}$,\\
&&&& $D_{9,2}A_{7,1}$, $C_{8,1}F_{4,1}^2$, $E_{7,2}B_{5,1}F_{4,1}$, $C_{10,1}B_{6,1}$, $B_{8,1}E_{8,2}$\\ \hline
$C$&$3B$&$6$&$6$& $A_{2,3}^6$, $A_{5,3}D_{4,3}A_{1,1}^3$, $A_{8,3}A_{2,1}^2$, $E_{6,3}G_{2,1}^3$, $D_{7,3}A_{3,1}G_{2,1}$, $E_{7,3}A_{5,1}$\\
$D$&$2C$&$2$&$9$&$A_{1,4}^{12}$, $B_{2,2}^6$, $B_{3,2}^4$, $B_{4,2}^3$, $B_{6,2}^2$, $B_{12,2}$, $D_{4,4}A_{2,2}^4$, $C_{4,2}A_{4,2}^2$, $A_{8,2}F_{4,2}$\\
$E$&$4C$&$5$&$5$& $A_{3,4}^3A_{1,2}$, $D_{5,4}C_{3,2}A_{1,1}^2$, $A_{7,4}A_{1,1}^3$, $E_{6,4}A_{2,1}B_{2,1}$, $C_{7,2}A_{3,1}$ \\
$F$&$5B$&$2$&$2$& $A_{4,5}^2$, $D_{6,5}A_{1,1}^2$\\
$G$&$6E$&$2$&$2$& $A_{5,6}B_{2,3}A_{1,2}$, $C_{5,3}G_{2,2}A_{1,1}$\\ 
$H$&$7B$&$1$&$1$& $A_{6,7}$\\
$I$&$8E$&$1$&$1$& $D_{5,8}A_{1,2}$\\
$J$&$6G$&$1$&$2$& $D_{4,12}A_{2,6}$, $F_{4,6}A_{2,2}$\\
$K$&$10F$&$1$&$1$& $C_{4,10}$\\
\hline 
\end{longtable}
}

\begin{remark}
It would be possible to classify orbit lattices by the rank and the quadratic space structure on the discriminant group; in fact, the number of isometric classes of orbit lattices is given in \cite[Table 4]{Ho2} (see Table \ref{T:Lie}). 
We will explicitly describe the orbit lattice $L_\g$ corresponding to $\g$ in Section \ref{S:5}.
Note that the orbit lattices have been described in \cite{Ho2} by using Niemeier lattices.
\end{remark}

\begin{remark} 
By Table \ref{T:Lie}, we observe 
$$\ell=\mathrm{lcm}(\{r_1k_1,r_2k_2,\dots,r_sk_s\}),$$ where $r_i$ is the lacing number of $\g_i$ and $\ell=2|g|$ if $g\in 2C,6G,10F$ and $\ell=|g|$ otherwise (See also  Remark \ref{R:ell}).
\end{remark}

\section{Inequivalent simple current extensions} \label{sec:4}
Let $W$ be one of the $10$ VOAs in Theorem \ref{T:Ho} and let $L$ be an even lattice satisfying \eqref{Eq:quad0} and \eqref{Eq:quad}.
In this subsection, we determine the number of holomorphic VOAs of central charge $24$ obtained as inequivalent simple current extensions of $V_L\otimes W$ based on the arguments in \cite{Ho2}.

Let $\mathcal{O}$ be the set of all isometries from $(\mathcal{D}(L),q_{L})$ to $(\irr(W),-q_W)$.
For $\psi\in \mathcal{O}$, 
$$V_\psi=\bigoplus_{\lambda+L\in\mathcal{D}(L)}V_{\lambda+L}\otimes\psi(\lambda+L)$$
has a holomorphic VOA structure of central charge $24$ as a simple current extension of $V_L\otimes W$ (\cite[Theorem 4.2]{EMS}).
Define $S_\psi=\{(V_{\lambda+L},\psi(\lambda+L))\mid \lambda+L\in\mathcal{D}(L)\}$ as in \eqref{Eq:S}.

Let $f\in \overline{\Aut}({W})$, $h\in \overline{O}({L})$ and $\psi\in\mathcal{O}$. Then  $f\circ \psi\circ h$ also belongs to $\mathcal{O}$ and $S_{f\circ \psi\circ h}\circ (h,f^{-1})=S_\psi$.
Hence $(h,f^{-1})$ induces an isomorphism between the holomorphic VOAs $V_\psi$ and  $V_{f\circ \psi\circ h}$. 
Conversely, we assume that $\psi,\psi'\in\mathcal{O}$ satisfy $V_\psi\cong V_{\psi'}$ as simple current extensions of $V_L\otimes W$, that is, there exists an isomorphism $\xi:V_\psi\to V_{\psi'}$ such that $\xi(V_L\otimes W)=V_L\otimes W$.
Then $S_\psi$ and $S_{\psi'}$ are conjugate by the restriction of $\xi$ to $V_L\otimes W$.
Note that $\Aut(V_L\otimes W)\cong\Aut(V_L)\times\Aut(W)$ and $\overline{\Aut}(V_L)$ is identified with $\overline{O}(L)$ (see Lemma \ref{L:XL}).
Therefore, the number of holomorphic VOAs obtained by inequivalent simple current extensions $\{V_\varphi\mid \varphi\in\mathcal{O}\}$ of $V_L\otimes W$ is equal to the number of double cosets in 
$$ \overline{\Aut}(W)\backslash \mathcal{O}/\overline{O}({L}).$$

\begin{remark}
In general, inequivalent simple current extensions may become isomorphic VOAs.
Fortunately, in our cases, this does not happen;  
see Propositions \ref{P:uni2-8}, \ref{P:uni5B}, \ref{P:uni2Ca}, \ref{P:uni2Cb}, \ref{P:uni6G} and \ref{P:uni10F}.
\end{remark}

Now fix an isometry $i\in \mathcal{O}$. Then $i^*(h)= i^{-1}\circ h\circ i\in O(\mathcal{D}(L),q_{L})$ for any $h\in O(\irr(W),-q_W)$. 
We consider the double cosets in  $i^*(\overline{\Aut}(W))\backslash  O(\mathcal{D}(L),q_{L}) / \overline{O}({L})$. 
Note that $i\circ f\in \mathcal{O}$ for any $f\in  O(\mathcal{D}(L),q_{L})$. 
Conversely, $i^{-1}\circ \psi\in O(\mathcal{D}(L),q_{L})$ for any $\psi\in \mathcal{O}$.  
Therefore, $i$ induces a bijective map between $\mathcal{O}$ and $O(\mathcal{D}(L),q_{L})$,  which gives the following:  

\begin{proposition}\label{P:Hoext}{\rm \cite[Theorem 2.7]{Ho2}}
Let $\psi,\psi'\in \mathcal{O}$. Then $\psi$ and $\psi'$ are in the same double coset of $ \overline{\Aut}(W) \backslash \mathcal{O}/\overline{O}({L})$ if and only if $ i^{-1}\circ\psi$ and $i^{-1}\circ\psi'$ are in the same double coset of  
$i^*(\overline{\Aut}(W))\backslash  O(\mathcal{D}(L),q_{L}) / \overline{O}({L})$.
In particular, the number of inequivalent simple current extensions in $\{V_\varphi\mid \varphi\in\mathcal{O}\}$ is equal to $|i^*(\overline{\Aut}(W))\backslash  O(\mathcal{D}(L),q_{L}) / \overline{O}({L})|$.
\end{proposition}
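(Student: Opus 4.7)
The plan is to exploit the fact that, for the fixed reference isometry $i \in \mathcal{O}$, the map
\[
\Phi\colon \mathcal{O} \to O(\mathcal{D}(L), q_L), \qquad \psi \mapsto i^{-1} \circ \psi,
\]
is a bijection with inverse $f \mapsto i \circ f$. I would first verify that $\Phi$ transports the double-coset relation on the left to the one on the right. For this it is useful to note at the outset that $i^{*}\colon f \mapsto i^{-1}\circ f\circ i$ is a group homomorphism from $O(\irr(W),-q_W)$ to $O(\mathcal{D}(L),q_L)$, so that $i^{*}(\overline{\Aut}(W))$ is genuinely a subgroup of $O(\mathcal{D}(L),q_L)$ and the right-hand double coset space makes sense.

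For the main step, I would assume that $\psi, \psi' \in \mathcal{O}$ lie in the same double coset of $\overline{\Aut}(W) \backslash \mathcal{O} / \overline{O}(L)$, so that $\psi' = f \circ \psi \circ h$ for some $f \in \overline{\Aut}(W)$ and $h \in \overline{O}(L)$. A one-line calculation
\[
i^{-1} \circ \psi' \;=\; (i^{-1} \circ f \circ i) \circ (i^{-1} \circ \psi) \circ h \;=\; i^{*}(f) \circ (i^{-1} \circ \psi) \circ h
\]
then shows that $\Phi(\psi)$ and $\Phi(\psi')$ lie in the same double coset of $i^{*}(\overline{\Aut}(W)) \backslash O(\mathcal{D}(L),q_L) / \overline{O}(L)$. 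The converse is obtained by reading the identity backwards: if $i^{-1}\circ\psi' = i^{*}(f)\circ i^{-1}\circ\psi\circ h$, then composing on the left with $i$ yields $\psi' = f\circ\psi\circ h$. Hence $\Phi$ induces a bijection between the two sets of double cosets.

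The second assertion then follows at once. The discussion preceding the statement already shows that equivalence classes of simple current extensions of $V_L\otimes W$ within $\{V_\varphi \mid \varphi\in\mathcal{O}\}$ are indexed by double cosets in $\overline{\Aut}(W)\backslash\mathcal{O}/\overline{O}(L)$; combining this with the bijection of double cosets established above identifies the set of equivalence classes with $i^{*}(\overline{\Aut}(W))\backslash O(\mathcal{D}(L),q_L)/\overline{O}(L)$, which has the claimed cardinality. There is no real obstacle in the proof itself; the statement is essentially a relabelling via conjugation by $i$. Its importance is conceptual, since passing from $\mathcal{O}$ (a set with no natural group structure) to the finite orthogonal group $O(\mathcal{D}(L),q_L)$ makes the count algorithmically tractable in the concrete cases treated in Section~\ref{S:5}.
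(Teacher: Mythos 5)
Your proof is correct and follows essentially the same route as the paper: the paper also fixes $i$, observes that $\psi\mapsto i^{-1}\circ\psi$ is a bijection from $\mathcal{O}$ onto $O(\mathcal{D}(L),q_L)$ intertwining the $\overline{\Aut}(W)$-action with the $i^*(\overline{\Aut}(W))$-action, and combines this with the preceding discussion identifying equivalence classes of extensions with double cosets in $\overline{\Aut}(W)\backslash\mathcal{O}/\overline{O}(L)$. Your write-up just makes the conjugation computation explicit where the paper leaves it implicit.
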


The following proposition proves the conjecture \cite[Conjecture 4.8]{Ho2} for six conjugacy classes.
The other four cases will be discussed in Section \ref{S:5}.
Some cases were discussed in \cite[Remark 4.9]{Ho2}.

\begin{proposition}\label{P:uni2-8} Let $g\in O(\Lambda)$ such that $W\cong V_{\Lambda_g}^{\hat{g}}$.
Assume that the conjugacy class of $g$ is $2A$, $3B$, $4C$, $6E$, $7B$ or $8E$.
Then, for each $L$ satisfying \eqref{Eq:quad0} and \eqref{Eq:quad}, there exists exactly one holomorphic VOA of central charge $24$ obtained as a simple current extension of $V_L\otimes W$, up to isomorphism.
\end{proposition}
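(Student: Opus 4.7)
The plan is to appeal directly to Proposition \ref{P:Hoext}, which reduces the statement to showing that the double coset space $i^*(\overline{\Aut}(W)) \backslash O(\mathcal{D}(L), q_L) / \overline{O}(L)$ has cardinality $1$ for each of the six listed conjugacy classes. Equivalently, I aim to prove that the product $i^*(\overline{\Aut}(W)) \cdot \overline{O}(L)$ equals the full orthogonal group $O(\mathcal{D}(L), q_L)$.

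For the classes $2A$ and $6E$, this will be immediate from Table \ref{Table:main}: there $\overline{\Aut}(W) = O(\irr(W), q_W)$, so $i^*(\overline{\Aut}(W))$ is already the full orthogonal group $O(\mathcal{D}(L), q_L)$ and a single double coset exists trivially.

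For the classes $3B$, $4C$, $7B$, $8E$, Table \ref{Table:main} records the index $|O(\irr(W), q_W) : \overline{\Aut}(W)| = 2$. By Lemma \ref{Lem:center}(1), $\overline{\Aut}(W)$ is normal in $O(\irr(W), q_W)$, so $i^*(\overline{\Aut}(W))$ is a normal index-$2$ subgroup of $O(\mathcal{D}(L), q_L)$. Hence the product $i^*(\overline{\Aut}(W)) \cdot \overline{O}(L)$ equals the whole group as soon as $\overline{O}(L) \not\subset i^*(\overline{\Aut}(W))$. To exhibit such an element I will use $-1 \in O(L)$, whose image in $\overline{O}(L)$ is the $-1$-isometry of the abelian group $\mathcal{D}(L)$. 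Under the isometry $i$, this corresponds to the $-1$-isometry of the abelian group $\irr(W)$, which by Lemma \ref{Lem:center}(2) does not belong to $\overline{\Aut}(W)$. This supplies the required element of $\overline{O}(L) \setminus i^*(\overline{\Aut}(W))$ and completes the argument.

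No step presents a serious obstacle: the essential input is assembled from Proposition \ref{P:Hoext}, Table \ref{Table:main}, and the two parts of Lemma \ref{Lem:center}. The only verification needed is that the isometry $i\colon (\mathcal{D}(L),q_L) \to (\irr(W),-q_W)$ intertwines the $-1$-maps on each side, which is automatic since any isomorphism of abelian groups intertwines inversion.
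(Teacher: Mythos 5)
Your proposal is correct and follows essentially the same route as the paper's proof: reduce to counting double cosets via Proposition \ref{P:Hoext}, observe that $\overline{\Aut}(W)$ is the full orthogonal group for $2A$ and $6E$, and for the index-$2$ cases use that the $-1$-isometry lies in $\overline{O}(L)$ but not in $i^*(\overline{\Aut}(W))$ by Lemma \ref{Lem:center}(2). The appeal to normality is harmless but superfluous, since an index-$2$ subgroup together with any element outside it already generates the whole group.
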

\begin{proof} By Proposition \ref{P:Hoext}, it suffices to show that $|i^*(\overline{\Aut}(W))\backslash  O(\mathcal{D}(L),q_{L}) / \overline{O}({L})|=1$, that is, $i^*(\overline{\Aut}(W))\overline{O}({L})=O(\mathcal{D}(L),q_{L})$.

If the conjugacy class of $g$ is $2A$ or $6E$, then the assertion is obvious since $i^*(\overline{\Aut(W)})\cong\Aut(W) \cong O(\mathcal{D}(L),q_{L})$ by Table \ref{Table:main}.

If the conjugacy class of $g$ is $3B$, $4C$, $7B$ or $8E$, then $|O(\mathcal{D}(L),q_{L}):i^*(\overline{\Aut}(W))|=2$ by Table \ref{Table:main}.
In addition, the $-1$-isometry of $\mathcal{D}(L)$ belongs to $\overline{O}(L)$  but it does not belong to $i^*(\overline{\Aut}(W))$ by Proposition \ref{Lem:center} (2).
Hence we obtain the desired result.
\end{proof}

The following lemma, which will be used to determine the number of double cosets, is probably well-known.
 
\begin{lemma}\label{DCvsConj}
Let $G$ be a finite group and let $G_1, G_2$ be subgroups of $G$.  
Suppose $N_{G}(G_2)=G_2$. Then $a, a'$ are in the same double coset of $G_2\backslash G/G_1$ if and only if $b^{-1}a^{-1}G_2ab=a'^{-1}G_2a'$ for some $b\in G_1$.
\end{lemma}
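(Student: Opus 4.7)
The statement is an elementary group-theoretic equivalence, so the plan is a direct two-sided verification, with the normalizer hypothesis $N_G(G_2) = G_2$ playing a role only in one direction.

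For the forward direction, I would start from the assumption that $a$ and $a'$ lie in the same double coset, so $a' = g_2 a g_1$ for some $g_2 \in G_2$ and $g_1 \in G_1$. Inverting gives $a'^{-1} = g_1^{-1} a^{-1} g_2^{-1}$, and conjugating $G_2$ by $a'$ yields
\[
a'^{-1} G_2 a' = g_1^{-1} a^{-1} g_2^{-1} G_2 g_2 a g_1 = g_1^{-1} a^{-1} G_2 a g_1,
\]
since $g_2 \in G_2$ normalizes $G_2$. Taking $b = g_1 \in G_1$ gives the desired equality $b^{-1} a^{-1} G_2 a b = a'^{-1} G_2 a'$. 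No hypothesis on $N_G(G_2)$ is needed here.

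For the backward direction, I would assume $b^{-1} a^{-1} G_2 a b = a'^{-1} G_2 a'$ for some $b \in G_1$, and rewrite it as $(ab)^{-1} G_2 (ab) = a'^{-1} G_2 a'$. Conjugating both sides by $a'$ gives $a'(ab)^{-1} G_2 (ab) a'^{-1} = G_2$, i.e.\ the element $c := a'(ab)^{-1} = a' b^{-1} a^{-1}$ satisfies $c G_2 c^{-1} = G_2$. Thus $c \in N_G(G_2)$, and this is the unique point at which the hypothesis $N_G(G_2) = G_2$ enters: it forces $c \in G_2$. Consequently $a' = c \cdot a \cdot b$ with $c \in G_2$ and $b \in G_1$, so $a'$ and $a$ are in the same double coset.

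There is essentially no obstacle; the argument is a short manipulation of cosets, and the normalizer hypothesis is used in exactly the one place where it is needed to turn an equality of conjugate subgroups into membership in $G_2$. The only thing to be careful about is the order of factors (since $G$ need not be abelian), which is why I would keep the inverses and the placement of $b$ versus $b^{-1}$ explicit throughout.
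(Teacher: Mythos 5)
Your proof is correct and follows essentially the same route as the paper's: the forward direction absorbs the $G_2$-factor by normality of $G_2$ in itself, and the backward direction uses $N_G(G_2)=G_2$ to convert the equality of conjugates $a'(ab)^{-1}G_2(ab)a'^{-1}=G_2$ into membership of $a'b^{-1}a^{-1}$ in $G_2$. The only cosmetic difference is that the paper names the element $aa_1a'^{-1}$ (the inverse of your $c$); the argument is the same.
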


\begin{proof}
Suppose $a'\in G_2aG_1$. Then $a'=a_2 aa_1$ for some $a_1\in G_1$ and $a_2\in G_2$. 
Then $a'^{-1} G_2a'= a_1^{-1}a^{-1}a_2^{-1}G_2a_2 aa_1 =a_1^{-1}(a^{-1}G_2 a)a_1$.

Conversely, we suppose 
$a'^{-1}G_2a' =a_1^{-1}a^{-1}G_2a a_1$ for some $a_1\in G_1$.
Then $
G_2 =a'a_1^{-1} a^{-1}G_2a a_1a'^{-1}$.  Since $N_{G}(G_2)=G_2$, we have $a a_1a'^{-1} =a_2\in G_2$ and $a' = a_2^{-1}aa_1$ as desired. 
\end{proof}

\begin{remark}\label{DCcount}
Under the same assumptions as in Lemma \ref{DCvsConj}, the number of double cosets of $G_2\backslash G/G_1$ is equal to the number of $G_1$-orbits on the set $\{a^{-1} G_2 a\mid  a\in G\}$ of all subgroups of $G$ conjugate to $G_2$ by conjugation.
\end{remark}

\section{Automorphism groups of holomorphic VOAs of central charge $24$}\label{S:5}
Let $V$ be a (strongly regular) holomorphic VOA of central charge $24$ with $0<\rank V_1<24$.
Fix a Cartan subalgebra $\h$ of $V_1$.
By Theorem \ref{T:Ho}, $W=\Com_V(\h)\cong V_{\Lambda_g}^{\hat{g}}$ for some $g\in O(\Lambda)$ belonging to the $10$ conjugacy classes. 
Note that $\Com_V(W)$ is a lattice VOA $V_L$ and 
the conjugacy class of $g$ is uniquely determined by the Lie algebra structure of $V_1$ (see Table \ref{T:Lie}). 
In addition, $V$ is a simple current extension of $V_L\otimes W$.

In this section, by using the Lie algebra structure of $\g=V_1$ in Schellekens' list, we describe the orbit lattice $L$ explicitly, which implies that $L=L_\g$ is uniquely determined by $\g$, up to isometry.
For each $\g$, we also determine the group structures of $K(V)$ and $\Out(V)$ based on the case-by-case analysis on $W$ and $L_\g$.
For the conjugacy classes of $g$ that we have not dealt with in Proposition \ref{P:uni2-8}, we also determine the number of holomorphic VOAs obtained as inequivalent simple current extensions of $V_L\otimes W$.

\begin{remark}
Based on a similar method, some partial results for the conjugacy classes $2A,3B,5B$ and $7B$ and $2C$ were obtained in \cite{LS17} and \cite{HS}, respectively.
\end{remark}

\begin{remark}
In the tables of this section, $\Sym_n$, $\Alt_n$ and $\Dih_n$ denote the symmetric group of degree $n$, the alternating group of degree $n$ and the dihedral group of order $n$, respectively.
\end{remark}

\subsection{Conjugacy class $2A$ (Genus $B$)}

Assume that $g$ belongs to the conjugacy class $2A$ of $O(\Lambda)$.
Then $O(\irr(W),q_W)\cong GO^+_{10}(2)\cong \Omega_{10}^+(2).2$.
By Table \ref{Table:main}, $\Aut(W)(\cong\overline{\Aut}(W))$ has the shape $GO_{10}^+(2)$, which is the full orthogonal group $O(\irr(W),q_W)$.

Since the central charge of $W$ is $8$, $L$ is an even lattice of rank $16$ such that $(\mathcal{D}(L),q_L)\cong(\irr(W),-q_W)$.
Then $\mathcal{D}(L)\cong \Z_2^{10}$.
Set $U=\sqrt{2}L^*$.
Then $\mathcal{D}(U)\cong \Z_2^6$, and 
by Proposition \ref{P:root}, 
 $U$ is a level $2$ lattice.
Such lattices $U$ were classified in \cite{RV}.
Furthermore that can now  be verified easily using MAGMA.
More precisely, it was proved in \cite[Theorem 2]{RV} (see also \cite[Remark 3.12]{HS}) that there exist exactly $17$ level $2$ lattices of rank $16$ with determinant $2^6$ up to isometry and they are uniquely determined by their root system (see Table \ref{table2to6}).
Their isometry groups are determined by MAGMA as in Table \ref{table2to6}.
Hence there are $17$ possible lattices for $L=\sqrt2U^*$; indeed, they satisfy $(\mathcal{D}(L),q_L)\cong(\irr(W),-q_W)$.
Note that $O(L)= O(U)$.

\begin{longtable}[c]{|c|c|c|c|c|} 
\caption{Level $2$ lattices of rank $16$ for the case $2A$} \label{table2to6}\\
\hline 
$R(U_\g)$ &$U_\g/P_\g$& $O(U_\g)/W(R(U_\g))$& $O(U_\g)$\\ \hline 
\hline 
 $A_1^{16}$ &$\Z_2^5$& $\AGL_4(2)$& $W(A_1)\wr\AGL_4(2)$  \\ 
 $A_3^4({\sqrt2}A_1)^4$&$\Z_2^3\times\Z_4$ &$W(D_4)$ &$(W(A_3)^4\times W(A_1)^4).W(D_4)$\\ 
 $D_4^2C_2^4$  &$\Z_2^3$&$2\times\Sym_4$ &$(W(D_4)^2\times W(C_2)^4). (2\times\Sym_4)$\\ 

 $A_5^2({\sqrt2}A_2)^2 C_2$&$\Z_3\times\Z_6$ &$\Dih_8$ &$(W(A_5)^2\times W(A_2)^2\times W(C_2)).\Dih_8$ 
\\  
 $A_7 ({\sqrt2}A_3) C_3^2$ &$\Z_2\times\Z_4$&$\Z_2^2$ &$(W(A_7)\times W(A_3)\times W(C_3)^2).\Z_2^2$ 
\\
 $D_5^2 ({\sqrt2}A_3)^2$ &$\Z_4^2$&$\Dih_8$&$(W(D_5)^2\times W(A_3)^2).\Dih_8$\\ 

 $C_4^4$ &$\Z_2$&$\Sym_4$ &$W(C_4)\wr \Sym_4$\\ 
 $ D_6 C_4 B_{3}^2$ &$\Z_2^2$ & $\Z_2$&$(W(D_6)\times W(C_4)\times W(B_3)^2).\Z_2$\\  
 $A_9 ({\sqrt2A_4}){B_3}$ &$\Z_{10}$&$\Z_2$ &$(W(A_9)\times W(A_4)\times W(B_3)).\Z_2$
\\ 

 $E_6({\sqrt2A_5}) C_5$ &$\Z_6$
&$\Z_2$& $(W(E_6)\times W(A_5)\times W(C_5)).\Z_2$\\ 

 $D_8B_4^2$ &$\Z_2^2$&$\Z_2$&$W(D_8)\times W(B_4)\wr \Z_2$\\ 
 $C_6^2{B_4}$ &$\Z_2$&$\Z_2$ &$W(C_6)\wr 2\times W(B_4)$\\ 
 $D_9 ({\sqrt2A_7})$ &$\Z_8$&$\Z_2$&  $(W(D_9)\times W(A_7)).\Z_2$\\ 
 $C_8 F_4^2$ &$1$&$\Z_2$& $W(C_8)\times W(F_4)\wr \Z_2$\\
 $E_7{B_5}F_4$ &$\Z_2$&$1$& $W(E_7)\times W(B_5)\times W(F_4)$\\ 
 $C_{10}{B_6}$&$\Z_2$&$1$& $W(C_{10})\times W(B_6)$\\
 $E_8 {B_8}$&$\Z_2$ &$1$& $W(B_8)\times W(E_8)$\\ 
\hline 
\end{longtable}
Let $\g$ be one of the $17$ Lie algebras in Table \ref{T:Lie} corresponding to $2A$.
By Proposition \ref{P:root}, 
the root system $R(U)$ of $U=\sqrt{2}L^*$ is uniquely determined by $\g$ as in Table \ref{T:2to6}.
As we mentioned, $U$ is also uniquely determined by $\g$; we set $L_\g=L$ and $U_\g=U$.
Let $P_\g$ be the sublattice of $U_\g$ generated by $R(U_\g)$ as in \eqref{Eq:Ug} (see also Proposition \ref{P:root}).

\begin{proposition} Assume that the conjugacy class of $g$ is $2A$.
\begin{enumerate}[{\rm (1)}]
\item $K(V)\cong U_\g/P_\g$.
\item $\Out(V)\cong O(U_\g)/W(R(U_\g))$.
\end{enumerate}
\end{proposition}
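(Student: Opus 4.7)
The plan is to derive (1) directly from Proposition \ref{L:KV2} by rescaling, and to derive (2) from Lemma \ref{L:Out}(1) after identifying $O(L)$ with $O(U_\g)$ and $W(V_1)$ with $W(R(U_\g))$ inside $O(\h)$.

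For (1), I will apply Proposition \ref{L:KV2}, which gives $K(V)\cong L^*/\tilde Q$ with $\tilde Q=\bigoplus_{i=1}^s (1/\sqrt{k_i})Q^i$. Since the level in case $2A$ is $\ell=2$, Remark \ref{R:Pg} yields $P_\g=\sqrt{2}\,\tilde Q$. Multiplication by $\sqrt{2}$ is an isomorphism of abelian groups $L^*/\tilde Q\simeq \sqrt{2}L^*/\sqrt{2}\tilde Q=U_\g/P_\g$, proving (1).

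For (2), I first verify the hypotheses of Lemma \ref{L:Out}(1). In the $2A$ case, Table \ref{Table:main} gives $\overline{\Aut}(W)=O(\irr(W),q_W)$, so already $\varphi^{*}(\overline{\Aut}(W))=O(\mathcal{D}(L),q_L)$ and the generation hypothesis is trivially satisfied. Hence $\Out(V)\cong O(L)/W(V_1)$. Since $U_\g=\sqrt{2}L^*$ is obtained from $L$ by rescaling and dualising, we have $O(L)=O(U_\g)$ canonically as subgroups of $O(\h)$. It remains to identify $W(V_1)$ with $W(R(U_\g))$. By Proposition \ref{P:root} the root systems of $V_1$ and $U_\g$ have the same type, and the map $\alpha\mapsto(\sqrt{2}/\sqrt{k_i})\alpha$ sends each root $\alpha$ of the simple ideal $\g_i$ to a root of $U_\g$. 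Since the reflection $\sigma_\alpha$ on $\h$ is invariant under positive rescaling of $\alpha$, the two reflection groups coincide as subgroups of $O(\h)$, and (2) follows.

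The argument is short; the only substantive input beyond Proposition \ref{L:KV2} and Lemma \ref{L:Out}(1) is the root-system identification furnished by Proposition \ref{P:root}, and the mild rescaling bookkeeping in Remark \ref{R:Pg}. I do not anticipate any real obstacle: once one is careful that the $\sqrt{2}$-rescaling matches the Weyl-group generators and that $O(L)=O(U_\g)$ naturally, both isomorphisms drop out.
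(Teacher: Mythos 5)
Your proof is correct and follows essentially the same route as the paper: Proposition \ref{L:KV2} together with the rescaling $P_\g=\sqrt2\tilde Q$ for part (1), and Lemma \ref{L:Out}(1) combined with Proposition \ref{P:root} for part (2). The only difference is that you spell out a few points the paper leaves implicit (that the generation hypothesis of Lemma \ref{L:Out} is automatic since $\overline{\Aut}(W)=O(\irr(W),q_W)$ in the $2A$ case, and that $W(V_1)$ and $W(R(U_\g))$ coincide as subgroups of $O(\h)$ rather than merely being abstractly isomorphic), which is a mild gain in rigor but not a different argument.
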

\begin{proof}
By Proposition \ref{L:KV2}, we have $K(V)\cong L^*_\g/\tilde{Q}$.
It follows from the definition of $P_\g$ that $P_\g\cong\sqrt2\tilde{Q}$ (cf. Remark \ref{R:Pg}), which proves (1).
By Proposition \ref{P:root}, we obtain  $W(V_1)\cong W(R(U_\g))$.
Hence (2) follows from Lemma \ref{L:Out} (1).
\end{proof}

By the proposition above and Table \ref{table2to6}, we obtain the group structures of $K(V)$ and $\Out(V)$ for all $17$ cases, which are summarized in Table \ref{T:2to6}.

\begin{longtable}[c]{|c|c|c|c|c|c|c|}
\caption{$K(V)$ and $\Out(V)$ for the case $2A$} \label{T:2to6}\\
\hline 
Genus&No.&$\g=V_1$&$R(U_\g)$ &$  \OOut(V_1)$ &$\Out(V)$& $K(V)$ \\ \hline 
\hline 
$B$&$5$&$A_{1,2}^{16}$&$A_1^{16}$&$\Sym_{16}$ &$\AGL_4(2)$&  $\Z_2^5$  \\ 
&$16$&$A_{3,2}^4A_{1,1}^4$&$A_3^4(\sqrt2A_1)^4$&$(\Z_2\wr\Sym_4)\times\Sym_4$&$W(D_4)$& $\Z_2^3\times\Z_4$ \\  
&$25$&$D_{4,2}^2C_{2,1}^4$&$D_4^2C_2^4$&$(\Sym_3\wr\Sym_2)\times\Sym_4$  &$\Z_2\times\Sym_4$&$\Z_2^3$ \\ 

&$26$&$A_{5,2}^2C_{2,1}A_{2,1}^2$&$A_5^2C_2(\sqrt2A_2)^2$&$(\Z_2\wr\Sym_2)\times(\Z_2\wr\Sym_2)$ &$\Dih_8$  &$\Z_3\times\Z_6$
\\ 
&$31$&$D_{5,2}^2A_{3,1}^2$&$D_5^2(\sqrt2A_3)^2$&$(\Z_2\wr\Sym_2)\times(\Z_2\wr\Sym_2)$ &$\Dih_8$&$\Z_4^2$\\ 

&$33$&$A_{7,2}C_{3,1}^2A_{3,1}$&$A_7C_3^2(\sqrt2A_3)$&$\Z_2\times\Sym_2\times\Z_2$&$\Z_2^2$ &$\Z_2\times \Z_4$
\\ 
&$38$&$C_{4,1}^4$&$C_4^4$&$\Sym_4$ &$\Sym_4$&$\Z_2$ \\ 
&$39$&$D_{6,2}C_{4,1}B_{3,1}^2$&$D_6C_4B_3^2$&$\Z_2\times\Sym_2$  & $\Z_2$&$\Z_2^2$\\ 
&$40$&$A_{9,2}A_{4,1}B_{3,1}$&$A_9(\sqrt2A_4)B_3$&$\Z_2\times\Z_2$ &$\Z_2$ &$\Z_{10}$
\\ 
&$44$&$E_{6,2}C_{5,1}A_{5,1}$&$E_6C_5(\sqrt2A_5)$&$\Z_2\times\Z_2$
&$\Z_2$&$\Z_6$\\ 
&$47$&$D_{8,2}B_{4,1}^2$&$D_8B_4^2$&$\Z_2\times\Sym_2$ &$\Z_2$&$\Z_2^2$\\ 
&$48$&$C_{6,1}^2B_{4,1}$&$C_6^2B_4$&$\Sym_2$ &$\Z_2$&$\Z_2$ \\ 
&$50$&$D_{9,2}A_{7,1}$&$D_9(\sqrt2A_7)$&$\Z_2\times\Z_2$ &$\Z_2$&$\Z_8$\\ 
&$52$&$C_{8,1}F_{4,1}^2$&$C_8F_4^2$&$\Z_2$ &$\Z_2$&$1$\\
&$53$&$E_{7,2}B_{5,1}F_{4,1}$&$E_7B_5F_4$&$\Z_2$ &$1$&$\Z_2$\\ 
&$56$&$C_{10,1}B_{6,1}$&$C_{10}B_6$&$1$ &$1$&$\Z_2$\\
&$62$&$B_{8,1}E_{8,2}$&$B_8E_8$&$1$ &$1$&$\Z_2$\\ 
\hline 
\end{longtable}

\begin{remark} The groups $K(V)$ and $\Out(V)$ have been determined in \cite{Sh20} if $$\g\in\{A_{1,2}^{16},A_{3,2}^4A_{1,1}^4,D_{4,2}^2B_{2,1}^4,
D_{5,2}^2A_{3,1}^2,C_{4,1}^4,D_{6,2}B_{3,1}^2C_{4,1},
D_{8,2}B_{4,1}^2,D_{9,2}A_{7,1}\}$$ by using the explicit construction of $V$.
\end{remark}

\subsection{Conjugacy class $3B$ (Genus $C$)}
Assume that $g$ belongs to the conjugacy class $3B$ of $O(\Lambda)$.
Then $O(\irr(W),q_W)\cong GO_8^-(3)\cong 2\times \PO^-_8(3).2$.
By Table \ref{Table:main}, 
$\Aut(W)(\cong\overline{\Aut}(W))$ has the shape $\PO^-_8(3).2$, which is an index $2$ subgroup of $O(\irr(W),q_W)$.

Since the central charge of $W$ is $12$, $L$ is an even lattice of rank $12$ such that $(\mathcal{D}(L),q_L)\cong(\irr(W),-q_W)$.
Then $\mathcal{D}(L)\cong \Z_3^8$.
Set $U=\sqrt3L^*$.
Then $\mathcal{D}(U)\cong \Z_3^4$, and by Proposition \ref{P:root}, $U$ is a level $3$ lattice.
Such lattices $U$ were classified in \cite{RV}. 
Furthermore that can now be verified easily using MAGMA.
More precisely, it was proved in \cite[Theorem 3]{RV} that there exist exactly $6$ level $3$ lattices of rank $12$ with determinant $3^4$ up to isometry, and they are uniquely determined by their root system (see Table \ref{table3to4}).
Since $O(U)$ is a subgroup of the automorphism group of the root system $R(U)$, its shape is easily determined as in Table \ref{table3to4}.
Hence there are $6$ possible lattices for $L=\sqrt3U^*$; indeed, they satisfy $(\mathcal{D}(L),q_L)\cong(\irr(W),-q_W)$.
Note that $O(L)=O(U)$.

\begin{longtable}[c]{|c|c|c|c|} 
\caption{Level $3$ lattices of rank $12$ for the case $3B$} \label{table3to4}\\
\hline 
$R(U_\g)$ & $U_\g/P_\g$&$O(U_\g)/W(R(U_\g))$& $O(U_\g)$\\ \hline \hline 
$A_2^6$ & $\Z_3$&$\Z_2\times \Sym_6$ & $(W(A_2)\wr \Sym_6).\Z_2$\\ 
  $A_5D_4(\sqrt3A_1)^3$&$\Z_2^3$ &$\Dih_{12}$& $(W(A_5)\times W(D_4)\times W(A_1)^3).\Dih_{12}$
\\ 
$A_8 {(\sqrt3A_2)^2}$ &$\Z_3^2$&$\Z_2^2$& $(W(A_8)\times W(A_2)^2).\Z_2^2$ \\ 
$E_6 G_2^3$ &$1$&$\Z_2\times \Sym_3$&  $(W(E_6)\times W(G_2)\wr\Sym_3).\Z_2$ \\ 
$D_7 {(\sqrt3A_3)} G_2$&$\Z_4$&$\Z_2$&  $(W(D_7)\times W(A_3)\times W(G_2)).\Z_2$
\\
$E_7 {(\sqrt3A_5)}$ &$\Z_6$&$\Z_2$& $(W(E_7) \times W(A_5)).\Z_2$ \\ \hline
\end{longtable}
Let $\g$ be one of the $6$ Lie algebras in Table \ref{T:Lie} corresponding to $3B$. 
By Proposition \ref{P:root}, the root system of $U=\sqrt3L^*$ is uniquely determined by $\g$ as in Table \ref{table3to8}.
As we mentioned, $U$ is also uniquely determined by $\g$; we set $L_\g=L$ and $U_\g=U$.
Let $P_\g$ be the sublattice of $U_\g$ generated by $R(U_\g)$ as in \eqref{Eq:Ug} (see also Proposition \ref{P:root}).

\begin{proposition} Assume that the conjugacy class of $g$ is $3B$.
\begin{enumerate}[{\rm (1)}]
\item $K(V)\cong U_\g/P_\g$.
\item $\Out(V)\cong O(U_\g)/\langle W(R(U_\g)),-1\rangle$.
\end{enumerate}
\end{proposition}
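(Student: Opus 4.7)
The plan is to prove both claims by directly parallelling the argument given in the $2A$ case, because the setup for $3B$ differs only by replacing the level $2$ with level $\ell=3$ and invoking the second (rather than first) part of Lemma \ref{L:Out}. Since all the structural ingredients (Proposition \ref{L:KV2}, Remark \ref{R:Pg}, Proposition \ref{P:root}, Lemma \ref{L:Out}) are already established in a form that applies uniformly to $\ell = 2, 3, 5, 7$ (the primes handled by Proposition \ref{P:root}), there is essentially no new work to do; the statement follows by specialization.

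For part (1), I would apply Proposition \ref{L:KV2} to obtain $K(V)\cong L_\g^*/\tilde{Q}$, where $\tilde{Q}=\bigoplus_{i}\frac{1}{\sqrt{k_i}}Q^i$. By the definition in Proposition \ref{P:Ug} (see also Remark \ref{R:Pg}) one has $P_\g = \sqrt{\ell}\,\tilde{Q}$, and by construction $U_\g = \sqrt{\ell}\,L_\g^*$; for class $3B$, $\ell=3$. Multiplication by $\sqrt{3}$ is therefore a group isomorphism $L_\g^*/\tilde{Q}\simto U_\g/P_\g$, giving $K(V)\cong U_\g/P_\g$.

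For part (2), I would first note that Proposition \ref{P:root} applies, since $3B$ is one of the four ``prime'' classes covered by it, so the root systems of $V_1$ and of $U_\g$ coincide in type; in particular $W(V_1)\cong W(R(U_\g))$. Also $O(L_\g)=O(\sqrt{3}L_\g^*)=O(U_\g)$. It then suffices to apply Lemma \ref{L:Out}(2), whose hypotheses I must check: $3B$ is not $2C$, $6G$ or $10F$, so the first hypothesis holds; and the second hypothesis—that $\overline{O}(L_\g)$ together with $\varphi^*(\overline{\Aut}(W))$ generate the full $O(\mathcal{D}(L_\g),q_{L_\g})$—was in effect verified inside the proof of Proposition \ref{P:uni2-8}, where one observes from Table \ref{Table:main} that $\overline{\Aut}(W)$ has index $2$ in $O(\irr(W),q_W)$, while by Lemma \ref{Lem:center}(2) the $-1$-isometry of $\mathcal{D}(L_\g)$ lies in $\overline{O}(L_\g)$ but not in $\varphi^*(\overline{\Aut}(W))$. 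Hence Lemma \ref{L:Out}(2) yields $\Out(V)\cong O(L_\g)/\langle W(V_1),-1\rangle = O(U_\g)/\langle W(R(U_\g)),-1\rangle$.

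There is no real obstacle; the only point worth being careful about is confirming the hypothesis of Lemma \ref{L:Out} in the present case, namely that the $-1$-isometry already sits inside $\overline{O}(L_\g)$. This is automatic because $-\mathrm{id}\in O(L_\g)$ for any lattice and it induces a non-trivial involution on $\mathcal{D}(L_\g)$ (since $\mathcal{D}(L_\g)\cong\Z_3^8$ has no $2$-torsion, $-\mathrm{id}$ acts non-trivially on every non-zero element). Combined with the index-$2$ count from Table \ref{Table:main} and the already-recorded fact that $-1\notin \varphi^*(\overline{\Aut}(W))$, this completes the verification and the proposition follows.
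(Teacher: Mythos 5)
Your proposal is correct and follows essentially the same route as the paper: part (1) via Proposition \ref{L:KV2} together with $P_\g=\sqrt{3}\,\tilde{Q}$ (Remark \ref{R:Pg}), and part (2) via Proposition \ref{P:root}, Proposition \ref{P:OV} and Lemma \ref{L:Out}~(2). Your extra care in verifying the generation hypothesis of Lemma \ref{L:Out} (index $2$ plus $-1\in\overline{O}(L_\g)\setminus\varphi^*(\overline{\Aut}(W))$) is exactly the argument the paper uses in Proposition \ref{P:uni2-8} and in the proof of Lemma \ref{L:Out} itself.
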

\begin{proof}
By Proposition \ref{L:KV2}, we have $K(V)\cong L^*_\g/\tilde{Q}$.
It follows from the definition of $P_\g$ that $P_\g\cong\sqrt3\tilde{Q}$ (cf. Remark \ref{R:Pg}), which proves (1).
By Proposition \ref{P:root}, we obtain  $W(V_1)=W(R(U_\g))$.
Hence (2) follows from Proposition \ref{P:OV} and Lemma \ref{L:Out} (2).
\end{proof}

The group structures of $K(V)$ and $\Out(V)$ are summarized in Table \ref{table3to8}.
 
\begin{longtable}[c]{|c|c|c|c|c|c|c|} 
\caption{$K(V)$ and $\Out(V)$ for the case $3B$} \label{table3to8} \\
\hline 
Genus&No. &$\g=V_1$&$R(U_\g)$ &$  \OOut(V_1)$  &$\Out(V)$& $K(V)$\\ \hline 
\hline 
$C$&$6$&$A_{2,3}^{6}$&$A_2^6$ &$\Z_2\wr\Sym_6$    & $\Sym_6$&  $\Z_3$\\ 
&$17$&${A_{5,3}}{D_{4,3}}A_{1,1}^{3}$&$A_5D_4(\sqrt3A_1)^3$&$\Z_2\times\Sym_3\times\Sym_3$ & $\Sym_3$& $\Z_2^3$
\\ 
&$27$&$A_{8,3}A_{2,1}^2$&$A_8(\sqrt3A_2)^2$ & $\Z_2\times(\Z_2\wr\Sym_2)$    & $\Z_2$&$\Z_3^2$ \\ 
&$32$&$ E_{6,3}{G_{2,1}}^3$&$E_6 G_2^3$ & $\Z_2\times\Sym_3$  &  $\Sym_3$& $1$ \\
&$34$&$  {D_{7,3}}{A_{3,1}}{G_{2,1}}$&$D_7(\sqrt3A_3) G_2$ &  $\Z_2\times\Z_2$  
&  $1$& $\Z_4$
\\  
&$45$&$E_{7,3}A_{5,1}$ &$E_7 {(\sqrt3A_5)}$& $\Z_2$ & $1$ & $\Z_6$ \\ \hline
\end{longtable}

\subsection{Conjugacy class $5B$ (Genus $F$)}\label{order5}
Assume that $g$ belongs to the conjugacy class $5B$ of $O(\Lambda)$.
Then $O(\irr(W),q_W)\cong GO_6^+(5)\cong 2.\PO_6^+(5).2^2$.
By Table \ref{Table:main}, 
$\Aut(W)(\cong\overline{\Aut}(W))$ has the shape $2.\PO_6^+(5).2$, which is an index $2$ subgroup of $O(\irr(W),q_W)$ not isomorphic to $SO^+_6(5)$.

Since the central charge of $W$ is $16$, $L$ is an even lattice of rank $8$ such that $(\mathcal{D}(L),q_L)\cong(\irr(W),-q_W)$.
Then $\mathcal{D}(L)\cong\Z_5^6$.
Set $U=\sqrt5L^*$.
Then $\mathcal{D}(U)\cong\Z_5^2$, and by Proposition \ref{P:root}, $U$ is a level $5$ lattice.
Note that $O(L)=O(U)$.

By Table \ref{T:Lie}, the Lie algebra structure of $\g=V_1$ is $A_{4,5}^2$ or $D_{6,5}A_{1,1}^2$.
By Proposition \ref{P:root}, the root lattice $P_\g$ of $U$ is isometric to $A_4^2$ or $D_6(\sqrt5A_1^2)$, respectively.
It is easy to see that $U$ is uniquely determined as an overlattice of $P_\g$; we set $U_\g=U$ and $L_\g=L$.
Since $O(U)$ is a subgroup of the automorphism group of the root system $R(U)$, its shape is easily determined as in Table \ref{table5to2}.

\begin{longtable}[c]{|c|c|c|c|c|} 
\caption{Level $5$ lattices of rank $8$ for the case $5B$} \label{table5to2}\\
\hline 
$\g=V_1$&$R(U_\g)$ & $U_\g/P_\g$&$O(U_\g)/W(R(U_\g))$& $O(U_\g)$ \\ 
\hline
\hline
$A_{4,5}^2$&$A_4^2$ &$1$&$\Dih_8$&  $(2\times W(A_4))\wr\Sym_2$ \\
$D_{6,5}A_{1,1}^2$&$D_6(\sqrt5A_1^2)$ &$\Z_2^2$&$\Sym_2$& $(W(D_6) \times 
W(A_1)^2).2$ \\ \hline
\end{longtable}
 
\begin{lemma}\label{Lem:5b} The subgroups $\overline{O}(L_\g)$ and $\varphi^*(\overline{\Aut}(W))$ generate $O(\mathcal{D}(L_\g),q_{L_\g})$.
\end{lemma}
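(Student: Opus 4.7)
The plan is to exploit the fact from Table \ref{Table:main} that $\varphi^*(\overline{\Aut}(W))$ has index exactly $2$ in $O(\mathcal{D}(L_\g),q_{L_\g})\cong \GO_6^+(5)$. By the order formula for the product of two subgroups, it then suffices to exhibit a single element of $\overline{O}(L_\g)$ that lies outside $\varphi^*(\overline{\Aut}(W))$; any such element together with $\varphi^*(\overline{\Aut}(W))$ will automatically generate the whole $O(\mathcal{D}(L_\g),q_{L_\g})$.

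First I would treat the two lattices $L_\g$ corresponding to $\g=A_{4,5}^2$ and $\g=D_{6,5}A_{1,1}^2$ separately. Using the description of $U_\g=\sqrt 5 L_\g^*$ as a level $5$ lattice with root system $R(U_\g)$ listed in Table \ref{table5to2}, I would compute (with MAGMA as referenced in the introduction) the order of $\overline{O}(L_\g)=\mu_{L_\g}(O(L_\g))$ inside the concrete model of $O(\mathcal{D}(L_\g),q_{L_\g})$. The simplest scenario is that $|\overline{O}(L_\g)|$ already exceeds half of $|\GO_6^+(5)|$, in which case $\overline{O}(L_\g)\not\subset \varphi^*(\overline{\Aut}(W))$ is automatic for cardinality reasons.

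If this crude order count is inconclusive, I would identify the index-$2$ subgroup $\varphi^*(\overline{\Aut}(W))$ as the kernel of a specific linear character $\chi\colon O(\mathcal{D}(L_\g),q_{L_\g})\to\{\pm1\}$. The shape $2.\PO_6^+(5).2$ of $\overline{\Aut}(W)$ from Table \ref{Table:main}, together with Remark \ref{R:5Bb} which ensures that the central $-1$-isometry lies in $\overline{\Aut}(W)$, pins down $\chi$ as one of the two characters of $\GO_6^+(5)/\Omega_6^+(5)\cong 2^2$ that vanish on the center (concretely, either the determinant character or the product of determinant and spinor norm). Then I would take a root reflection $\sigma_\alpha\in O(U_\g)$ associated with an explicit short root $\alpha$ of $R(U_\g)$, push it through $\mu_{L_\g}$, and compute $\chi(\mu_{L_\g}(\sigma_\alpha))$; a single value $\chi(\mu_{L_\g}(\sigma_\alpha))=-1$ finishes the proof.

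The main technical obstacle is the bookkeeping required to identify the abstract subgroup $\varphi^*(\overline{\Aut}(W))$ as a concrete character kernel inside $O(\mathcal{D}(L_\g),q_{L_\g})$, since several index-$2$ subgroups of $\GO_6^+(5)$ contain the center. Once Remark \ref{R:5Bb} is used to rule out $SO_6^+(5)$ as a candidate (because $\overline{\Aut}(W)$ must in fact contain a non-trivial ``outer'' contribution coming from lifts of $-1\in O(\Lambda_g)$), the remaining character identification is a direct finite-group computation, and the verification that some explicit element of $\overline{O}(L_\g)$ evaluates non-trivially on it is a short MAGMA check using the description of $O(L_\g)$ in Table \ref{table5to2}.
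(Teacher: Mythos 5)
Your overall strategy is sound and genuinely different from the paper's. You exploit the fact that $\varphi^*(\overline{\Aut}(W))$ is normal of index $2$ in $O(\mathcal{D}(L_\g),q_{L_\g})\cong \GO_6^+(5)$, so that exhibiting a single element of $\overline{O}(L_\g)$ outside it already forces the two subgroups to generate everything; the burden is then to produce such an element, which you propose to do by realizing the index-$2$ subgroup as a character kernel and evaluating that character on a root reflection. The paper argues from the other subgroup: it identifies $\overline{O}(L_\g)$ concretely as $(2\times W(A_4))\wr\Sym_2\cong\GO_3(5)\wr\Sym_2$ (for $A_4^2$) or $\Z_2^6{:}\Sym_6\cong\GO_1(5)\wr\Sym_6$ (for $D_6(\sqrt5A_1^2)$), notes that these are \emph{maximal} subgroups of $\GO_6^+(5)$ (stabilizers of orthogonal decompositions, cf.\ Wilson), and is then done in one line: the group generated by a maximal subgroup $M$ and the index-$2$ subgroup $H$ is either $M$ or everything, and it cannot be $M$ because $|M|$ is far smaller than $|G|/2$. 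The paper's route completely sidesteps the question of \emph{which} index-$2$ subgroup $\varphi^*(\overline{\Aut}(W))$ is, which is precisely the bookkeeping you identify as your main obstacle.

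Two of your intermediate justifications do not survive scrutiny, though neither is fatal to the plan. First, the ``crude order count'' can never succeed: $|\overline{O}(L_\g)|$ is many orders of magnitude below $|\GO_6^+(5)|/2$. Second, Remark \ref{R:5Bb} cannot be used to rule out $SO^+_6(5)$ or to restrict $\chi$ to ``characters vanishing on the center'': since $5^3\equiv 1\pmod 4$, the $-1$-isometry lies in $\Omega_6^+(5)$, hence in \emph{every} index-$2$ subgroup of $\GO_6^+(5)$, so the condition $-1\in\overline{\Aut}(W)$ carries no information in this case. The fact that $\overline{\Aut}(W)$ is not isomorphic to $SO^+_6(5)$ has to be imported from the determination of $\Aut(W)$ in the cited references, as the paper does at the start of Section \ref{order5}. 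Granting that, your remaining two-way ambiguity for $\chi$, and the evaluation of $\chi$ on an explicit reflection in $\overline{O}(L_\g)$, are indeed finite computations that would close the argument; but the maximality argument gets there with essentially no computation.
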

\begin{proof}
Let $\varphi$ be the isometry from $(\mathcal{D}(L_\g),q_{L_\g})$ to $(\irr(W),-q_W)$ satisfying \eqref{Eq:V}.
Recall that $\varphi^{*}(\overline{\Aut}(W))$ is an index $2$ subgroup of $O(\irr(W),q_W)\cong GO_6^+(5)$ not isomorphic to $SO^+_6(5)$ and that $O(\irr(W),q_W)\cong O(\irr(W),-q_W)$.
If the root system $R(U_\g)$ is $A_4^2$ or $D_6(\sqrt5A_1)^2$, then $\overline{O}(L_\g)\cong (2\times W(A_4))\wr\Sym_2\cong\GO_3(5)\wr\Sym_2$ or $\Z_2^6{:}\Sym_6\cong GO_1(5)\wr\Sym_6$, respectively.
In both cases, $\overline{O}(L_\g)$ is a maximal subgroup of $O(\mathcal{D}(L_\g),q_{L_\g})$ (cf.\ \cite[Theorem 3.12]{Wi}), and hence $\overline{O}(L_\g)$ and $\varphi^*(\overline{\Aut}(W))$ generate $O(\mathcal{D}(L_\g),q_{L_\g})$.
\end{proof}

By Proposition \ref{P:Hoext} and Lemma \ref{Lem:5b}, we obtain the following:

\begin{proposition}\label{P:uni5B} Assume that the conjugacy class of $g$ is $5B$.
Then, for each $L$, there exists exactly one holomorphic VOA of central charge $24$ obtained as a simple current extension of $V_L\otimes W$, up to isomorphism.
\end{proposition}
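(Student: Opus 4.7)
The plan is to combine Proposition \ref{P:Hoext} with Lemma \ref{Lem:5b}. By Proposition \ref{P:Hoext}, the number of holomorphic VOAs obtained as inequivalent simple current extensions of $V_L\otimes W$ equals the cardinality of the double coset space $\varphi^*(\overline{\Aut}(W))\backslash O(\mathcal{D}(L),q_L)/\overline{O}(L)$, so it suffices to show this space has a single element.

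Next, I would observe the following general fact: if $H$ and $K$ are subgroups of a finite group $G$ and $H$ is normal in $G$, then $HK$ is itself a subgroup, hence equals the subgroup generated by $H$ and $K$; in particular, if $\langle H,K\rangle=G$ with $H\trianglelefteq G$, then $HK=G$, and so the double coset space $H\backslash G/K$ is a single point.

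I would apply this with $G=O(\mathcal{D}(L),q_L)$, $H=\varphi^*(\overline{\Aut}(W))$, and $K=\overline{O}(L)$. By Table \ref{Table:main}, $\varphi^*(\overline{\Aut}(W))$ has index $2$ in $O(\mathcal{D}(L),q_L)$, so it is normal. By Lemma \ref{Lem:5b}, $\overline{O}(L)$ and $\varphi^*(\overline{\Aut}(W))$ generate $O(\mathcal{D}(L),q_L)$. Combining these, $\varphi^*(\overline{\Aut}(W))\cdot\overline{O}(L)=O(\mathcal{D}(L),q_L)$, and therefore the double coset space consists of only the trivial coset, proving the proposition.

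There is no real obstacle here: everything has been set up by the preceding lemmas, and the argument is just the standard observation that a subgroup together with a normal subgroup that jointly generate the ambient group give a single double coset. The substantive content of the proof lies entirely in Lemma \ref{Lem:5b} (where maximality of $\overline{O}(L)$ in $O(\mathcal{D}(L),q_L)$ is the key input) and in the computation of $\overline{\Aut}(W)$ from Table \ref{Table:main}.
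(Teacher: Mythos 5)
Your proof is correct and follows essentially the same route as the paper, which simply cites Proposition \ref{P:Hoext} and Lemma \ref{Lem:5b}; you have merely made explicit the elementary group-theoretic step (index $2$ implies normal, so generation implies $\varphi^*(\overline{\Aut}(W))\cdot\overline{O}(L)=O(\mathcal{D}(L),q_L)$ and hence a single double coset) that the paper leaves implicit.
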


\begin{proposition} Assume that the conjugacy class of $g$ is $5B$.
\begin{enumerate}[{\rm (1)}]
\item $K(V)\cong U_\g/P_\g$.
\item $\Out(V)$ have the shapes in Table \ref{table5to6}.
\end{enumerate}
\end{proposition}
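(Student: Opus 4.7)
The plan is to follow the pattern of the proofs already given for classes $2A$ and $3B$. Part~(1) is a direct consequence of Proposition~\ref{L:KV2} once one identifies the lattice $P_\g$. For class $5B$, $\ell=|g|=5$ (cf.\ Remark~\ref{R:ell}), and by Remark~\ref{R:Pg} one has $P_\g=\sqrt{5}\tilde{Q}$. Thus $U_\g/P_\g=\sqrt{5}L_\g^*/\sqrt{5}\tilde{Q}\cong L_\g^*/\tilde{Q}\cong K(V)$, where the last isomorphism is Proposition~\ref{L:KV2}.

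For part~(2), I would invoke Proposition~\ref{P:OV}, which reduces the computation to determining the intersection $\overline{O}(L_\g)\cap \varphi^*(\overline{\Aut}(W))$ inside $O(\mathcal{D}(L_\g),q_{L_\g})$. Lemma~\ref{L:Out} is applicable to class $5B$, and its hypothesis holds by Lemma~\ref{Lem:5b}; combined with the index $|O(\irr(W),q_W):\overline{\Aut}(W)|=2$ from Table~\ref{Table:main}, this yields
\[
|O(L_\g)/W(V_1):\Out(V)|=2.
\]
Since $R(U_\g)$ and $V_1$ have the same root system type (Proposition~\ref{P:root}) and $O(L_\g)=O(U_\g)$, the group $\Out(V)$ appears as an index $2$ subgroup of $O(U_\g)/W(R(U_\g))$.

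For $\g=D_{6,5}A_{1,1}^2$, the quotient $O(U_\g)/W(R(U_\g))\cong\Sym_2$ has only the trivial index $2$ subgroup, so $\Out(V)=1$ and no further work is needed. For $\g=A_{4,5}^2$, however, $O(U_\g)/W(R(U_\g))\cong\Dih_8$ has three index $2$ subgroups, and the main obstacle is to single out which one is $\Out(V)$. The key tool I would use is Remark~\ref{R:5Bb}: since the $-1$-isometry of $\mathcal{D}(L_\g)$ lies in $\varphi^*(\overline{\Aut}(W))$, the image of $-1\in O(L_\g)$ contributes a central involution to $\Out(V)$, which already rules out some candidates. To pin down the remainder, I would describe explicitly the action on $\mathcal{D}(L_\g)\cong \Z_5^6$ of the swap of the two $A_4$ summands and of the diagram automorphism of a single $A_4$, and use Table~\ref{Table:main} to identify $\varphi^*(\overline{\Aut}(W))$ as the specific index~$2$ subgroup of $\GO_6^+(5)$ occurring there. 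Matching the fixed subgroups should then isolate the correct index $2$ subgroup and yield the shapes recorded in Table~\ref{table5to6}.
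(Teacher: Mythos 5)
Part (1) and the reduction in part (2) follow the paper: you invoke Proposition \ref{L:KV2} together with $P_\g=\sqrt{5}\tilde{Q}$ for (1), and Proposition \ref{P:OV}, Lemma \ref{L:Out} and Lemma \ref{Lem:5b} to get $|O(L_\g)/W(V_1):\Out(V)|=2$, which settles $\g=D_{6,5}A_{1,1}^2$ immediately. Up to this point your argument is correct and is essentially the paper's.

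For $\g=A_{4,5}^2$, however, there is a genuine gap. Your key tool for singling out the right index $2$ subgroup of $O(U_\g)/W(R(U_\g))\cong\Dih_8$ is Remark \ref{R:5Bb}: since $-1\in\varphi^*(\overline{\Aut}(W))$ and $-1\in\overline{O}(L_\g)$, the image of $-1$ lies in $\Out(V)$. But this image is the product $ab$ of the two sign involutions on the two $A_4$ summands, i.e.\ the \emph{central} involution of $\Dih_8$, which is also the commutator subgroup of $\Dih_8$; hence it is contained in \emph{every} index $2$ subgroup (the two Klein four-groups and the cyclic $\Z_4$ alike). So this observation rules out nothing, and in particular does not exclude $\Out(V)\cong\Z_4$. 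The remaining plan --- identify $\varphi^*(\overline{\Aut}(W))$ as a specific index $2$ subgroup of $\GO_6^+(5)$ from Table \ref{Table:main} and ``match fixed subgroups'' --- is not carried out and cannot work as stated: the table records only the abstract shape $2.\PO_6^+(5).2$ and the index, which does not distinguish between the two normal index $2$ subgroups of $\GO_6^+(5)$ other than $SO_6^+(5)$, and without that one cannot decide which of the generators $a$ (sign change on one $A_4$) and $t$ (the swap) of $\overline{O}(L_\g)$ land in $\varphi^*(\overline{\Aut}(W))$. The paper closes exactly this gap by a MAGMA computation showing that $\overline{O}(L_\g)\cap\varphi^*(\overline{\Aut}(W))$ has shape $2\times(\Sym_5\wr\Sym_2)$, whence $\Out(V)\cong\Z_2^2$ by Proposition \ref{P:OV}; some explicit computation of this kind (e.g.\ evaluating the determinant and spinor norm of $a$ and $t$ on $\mathcal{D}(L_\g)\cong\Z_5^6$ and comparing with the invariant cutting out $\overline{\Aut}(W)$) is needed to complete your argument.
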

\begin{proof}
By Proposition \ref{L:KV2}, we have $K(V)\cong L^*_\g/\tilde{Q}$.
It follows from the definition of $P_\g$ that $P_\g\cong\sqrt5\tilde{Q}$ (cf. Remark \ref{R:Pg}), which proves (1).

Next, we determine $\Out(V)$.
By Proposition \ref{P:root}, we obtain  $W(V_1)=W(R(U_\g))$.
By Proposition \ref{P:OV} and Lemmas \ref{L:Out} and \ref{Lem:5b}, we have $|O(L_\g)/W(R(U_\g)):\Out(V)|=2$.
Hence $|\Out(V)|=4$ or $1$ if $R(U_\g)\cong A_4^2$ or $D_6(\sqrt5A_1)^2$, respectively.

Assume that $R(U_\g)\cong A_4^2$.
Note that $\overline{O}(L_\g)\cong O(L_\g)$
and that $ \overline{O}(L_\g)\cap \varphi^{*}(\overline{\Aut}(W))$ contains $W(R(U_\g))\cong \Sym_5\times\Sym_5$ as a subgroup.
Checking possible index $2$ subgroups of $\overline{O}(L_\g)$ obtained as $\overline{O}(L_\g)\cap\varphi^*(\overline{\Aut}(W))$, one can verify that $\overline{O}(L_\g)\cap\varphi^{*}(\overline{\Aut}(W))$ has the shape $2\times(\Sym_5\wr\Sym_2)$ by using MAGMA.
Hence $\Out(V)\cong\Z_2^2$ by Proposition \ref{P:OV}.
\end{proof}

The group structures of $K(V)$ and $\Out(V)$ are summarized in Table \ref{table5to6}.

\begin{longtable}[c]{|c|c|c|c|c|c|c|} 
\caption{$K(V)$ and $\Out(V)$ for the case $5B$} \label{table5to6}\\
\hline 
Genus&No.&$\g=V_1$ &$R(U_\g)$ &$  \OOut(V_1)$  &$\Out(V)$& $K(V)$\\ 
\hline \hline 
$F$&$9$&$ A_{4,5}^2$&$A_4^2$ &$\Z_2\wr\Sym_2$   & $\Z_2^2$ & $1$ \\
&$20$&$D_{6,5}A_{1,1}^2$&$D_6(\sqrt5A_1^2)$ &$\Z_2\times\Sym_2$   & $1$& $\Z_2^2$ \\ \hline
\end{longtable}

\begin{remark}\label{R:5B} The even lattice $U$ is a level $5$ lattice of rank $8$ (see Proposition \ref{P:root}) and $(\mathcal{D}(U),q_U)$ is a $2$-dimensional quadratic space over $\Z_5$ of plus type.
By using these properties, one could prove that the root system of $U$ is $A_4^2$ or $D_6(\sqrt{5}A_1^2)$.
\end{remark}

\subsection{Conjugacy class $7B$ (Genus $H$)}\label{order7}
Assume that $g$ belongs to the conjugacy class $7B$ of $O(\Lambda)$.
Then $O(\irr(W),q_W)\cong GO_5(7)\cong 2\times \PO_5(7).2$.
By Table \ref{Table:main},
$\Aut(W)(\cong\overline{\Aut}(W))$ has the shape $\PO_5(7).2$, which is an index $2$ subgroup of $O(\irr(W),q_W)$. 

By Table \ref{T:Lie}, the Lie algebra structure of $\g=V_1$ is $A_{6,7}$. 
Since the central charge of $W$ is $18$, 
$L$ is an even lattice of rank $6$ such that $(\mathcal{D}(L),q_L)\cong(\irr(W),-q_W)$.
Then $\mathcal{D}(L)\cong \Z_7^5$.
Set $U=\sqrt7L^*$.
Then $\mathcal{D}(U)\cong \Z_7$ and by Proposition \ref{P:root}, $U$ is a level $7$ lattice. 
In addition, the root system $R(U)$ of $U$ is $A_6$.
Hence $U=U_\g\cong P_\g\cong A_6$, and $L=L_\g\cong\sqrt{7}A_6^*$.
The isometry group of $U_\g$ is summarized in Table \ref{table7}.

\begin{longtable}[c]{|c|c|c|c|c|} 
\caption{Level $7$ lattice of rank $6$ for the case $7B$} \label{table7}\\
\hline 
$\g=V_1$&$R(U_\g)$ &$U_\g/P_\g$& $O(U_\g)/W(R(U_\g))$& $O(U_\g)$ \\ 
\hline
\hline
$A_{6,7}$&$A_6$ &$1$&$\Z_2$&  $\Z_2\times W(A_6)$ \\ \hline
\end{longtable}

\begin{proposition} Assume that the conjugacy class of $g$ is $7B$.
\begin{enumerate}[{\rm (1)}]
\item $K(V)\cong U_\g/P_\g$.
\item $\Out(V)=1$
\end{enumerate}
\end{proposition}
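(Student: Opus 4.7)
The plan is to follow exactly the same template used for the cases $3B$, $4C$, and $8E$, exploiting the preparatory results already established in the paper. First, part (1) is essentially immediate: by Proposition \ref{L:KV2}, $K(V) \cong L_\g^{*}/\tilde{Q}$, and by the definition of $P_\g$ together with Remark \ref{R:Pg}, we have $P_\g = \sqrt{7}\tilde{Q}$ inside $U_\g = \sqrt{7}L_\g^{*}$. Multiplying by $1/\sqrt{7}$ is an isometry onto $L_\g^{*}$ that carries $P_\g$ to $\tilde{Q}$, so $U_\g/P_\g \cong L_\g^{*}/\tilde{Q} \cong K(V)$. Since Table \ref{table7} shows $U_\g/P_\g = 1$, in fact $K(V) = 1$.

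For part (2), the strategy is to invoke Lemma \ref{L:Out}(2), so the key point is to verify its hypothesis: $\overline{O}(L_\g)$ and $\varphi^{*}(\overline{\Aut}(W))$ together generate $O(\mathcal{D}(L_\g),q_{L_\g})$. By Table \ref{Table:main}, $\varphi^{*}(\overline{\Aut}(W))$ has index $2$ in $O(\mathcal{D}(L_\g),q_{L_\g})$, so it suffices to produce a single element of $\overline{O}(L_\g)$ that is not in $\varphi^{*}(\overline{\Aut}(W))$. The natural candidate is the $-1$-isometry: it lies in $\overline{O}(L_\g)$ since $O(L_\g) \cong \Z_2 \times W(A_6)$ contains $-\mathrm{id}$, and by Lemma \ref{Lem:center}(2) the class $7B$ is one where $\overline{\Aut}(W)$ does not contain the $-1$-isometry of $\irr(W)$. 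Under the isometry $\varphi$, this translates to the statement that $\varphi^{*}(\overline{\Aut}(W))$ does not contain the $-1$-isometry of $\mathcal{D}(L_\g)$. Hence the two subgroups together generate the full orthogonal group.

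Applying Lemma \ref{L:Out}(2) then gives $\Out(V) \cong O(L_\g)/\langle W(V_1), -1\rangle$. By Proposition \ref{P:root}, $W(V_1) = W(R(U_\g)) = W(A_6)$, and Table \ref{table7} yields $O(L_\g) = O(U_\g) \cong \Z_2 \times W(A_6)$, where the $\Z_2$ factor is exactly $\langle -1\rangle$. Therefore $\langle W(V_1), -1\rangle = O(L_\g)$ and the quotient is trivial, giving $\Out(V) = 1$.

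I do not anticipate any real obstacle here, since all the necessary ingredients (the structure of $O(L_\g)$, the identification $W(V_1) = W(A_6)$, and the fact that $-1 \notin \overline{\Aut}(W)$ for class $7B$) have already been recorded in preceding propositions and tables; the proof is just a short assembly. The only place where one should be slightly careful is the translation between the $-1$-isometry of $\irr(W)$ and that of $\mathcal{D}(L_\g)$ via $\varphi$, but since $\varphi$ is a group isomorphism it commutes with $-1$ on both sides, so the translation is automatic.
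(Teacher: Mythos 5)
Your proof is correct and follows essentially the same route as the paper: part (1) via Proposition \ref{L:KV2} and the identification $P_\g=\sqrt{7}\tilde{Q}$, and part (2) via Proposition \ref{P:OV} and Lemma \ref{L:Out}(2). Your explicit verification of the generation hypothesis of Lemma \ref{L:Out} (using normality, index $2$, and $-1\notin\varphi^*(\overline{\Aut}(W))$ from Lemma \ref{Lem:center}(2)) is a worthwhile detail that the paper leaves implicit, but it is the same argument the paper uses elsewhere (e.g.\ in Proposition \ref{P:uni2-8}).
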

\begin{proof}

By Proposition \ref{L:KV2}, we have $K(V)\cong L^*_\g/\tilde{Q}$.
It follows from the definition of $P_\g$ that $P_\g\cong \sqrt7\tilde{Q}$ (cf. Remark \ref{R:Pg}), which proves (1).
By Proposition \ref{P:root}, we have $W(V_1)=W(R(U_\g))$.
Hence (2) follows from Proposition \ref{P:OV} and Lemma \ref{L:Out} (2).
\end{proof}

The group structures of $K(V)$ and $\Out(V)$ are summarized in Table \ref{table7to5}.

\begin{longtable}[c]{|c|c|c|c|c|c|c|} 
\caption{$K(V)$ and $\Out(V)$ for the case $7B$} \label{table7to5}\\
\hline 
Genus&No.&$\g=V_1$&$R(U_\g)$ &$  \OOut(V_1)$  &$\Out(V)$& $K(V)$\\ 
\hline \hline 
$H$&$11$&$ A_{6,7}$&$A_6$ & $\Z_2$ & $1$ & $1$  \\ \hline
\end{longtable}

\begin{remark}\label{R:7B} The lattice $U$ is an even lattice of rank $6$ with $\mathcal{D}(U)\cong\Z_7$.
By using this property, one could prove that $U\cong A_6$.
\end{remark}

\subsection{Conjugacy class $2C$ (Genus $D$)}\label{Sec:2c} 
Assume that $g$ belongs to the conjugacy class $2C$ of $O(\Lambda)$.
Then $O(\irr(W),q_W)\cong 2^{1+20}_+.(GO^-_{10}(2)\times\Sym_3)$.
By Table \ref{Table:main}, $\Aut(W)(\cong\overline{\Aut}(W))$ has the shape $ 2^{1+20}_+.(\Sym_{12}\times \Sym_3)$.
Since $\Sym_{12}$ is a maximal subgroup of $GO^-_{10}(2)$ (\cite{ATLAS}), $\Aut(W)$ is also a maximal subgroup of 
$O(\irr(W),q_W)$ and it is self-normalizing.

\begin{remark}\label{R:S12}
Set $\Omega=\{1,2,\dots,12\}$ and $\mathcal{X}=\{A\subset\Omega\mid |A|\equiv0\pmod2\}/\{\emptyset,\Omega\}$.
Then $\mathcal{X}$ is a $10$-dimensional vector space over $\F_2$ by the symmetric difference.
In addition, $\mathcal{X}$ has the quadratic form of minus type defined by $A\mapsto |A|/2\pmod2$ (cf.\ \cite[page 147]{ATLAS}).
Since $\Sym_{12}$ naturally acts on $\mathcal{X}$ and it preserves the quadratic form, we obtain $\Sym_{12}\subset GO^-_{10}(2)$.
\end{remark}

It was proved in \cite[Theorem 2.8]{HS} that $(\irr(W),q_W)\cong(\mathcal{D}(\sqrt2D_{12}),q_{\sqrt2D_{12}})$.
Since the central charge of $W$ is $12$, the rank of $L$ is $12$.
Note that $(\irr(W),-q_W)\cong (\irr(W),q_W)$ and that $(\mathcal{D}(L),q_L)\cong(\irr(W),-q_W)$.

\begin{lemma}\label{L:2C}
Let $H$ be an even lattice of rank $12$ such that $(\mathcal{D}(H), q_H) \cong (\mathcal{D}(\sqrt{2}D_{12} ), q_{\sqrt2D_{12}})$ as quadratic spaces. Then $H\cong \sqrt{2}D_{12}$ or $\sqrt{2}E_8\sqrt2D_4$.  
\end{lemma}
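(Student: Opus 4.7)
The plan is to pass from $H$ to a closely related even lattice $M$ of rank $12$ with determinant $4$, and then to invoke the classification of such lattices. The first step is to extract from the shape of $\mathcal{D}(H)$ the divisibilities $\langle u,v\rangle\in 2\Z$ and $\langle v,v\rangle\in 4\Z$ for all $u,v\in H$. For the pairing claim, I would look at the natural reduction map $H/2H\to H^*/2H^*$: its cokernel is $\mathcal{D}(H)/2\mathcal{D}(H)$, which for $\mathcal{D}(H)\cong\Z_2^{10}\oplus\Z_4^2$ has order $2^{12}=|H^*/2H^*|$, forcing the map to vanish and hence $H\subset 2H^*$. For the norm claim, once $v/2\in H^*$ the class $v/2+H$ lies in the $2$-torsion of $\mathcal{D}(H)$ and satisfies $q_H(v/2+H)=\langle v,v\rangle/8\pmod{\Z}$; the ``$\mathrm{II}$'' type of both Jordan blocks restricts $q_H$ to values in $\{0,1/2\}$ on the $2$-torsion subgroup, which yields $\langle v,v\rangle\in 4\Z$.

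Armed with these divisibilities, set $M:=\frac{1}{\sqrt 2}H$. Then $M$ is an even lattice of rank $12$ whose Gram matrix is $(1/2)$ times that of $H$, so $\det M=2^{-12}\det H=2^{-12}\cdot 2^{14}=4$. A short computation identifies $\mathcal{D}(M)$ with $H^*/H_2$, where $H_2$ is the preimage of the $2$-torsion of $\mathcal{D}(H)$; equivalently, $\mathcal{D}(M)$ is the quotient of $\mathcal{D}(H)$ by its $2$-torsion, and the induced form satisfies $q_M=2q_H$. Applied to the $4^{-2}_{\mathrm{II}}$ Jordan block, this produces the quadratic form on $\Z_2^2$ with values $\{0,1/2,1/2,1/2\}$, which is exactly the discriminant form shared by $D_{12}$ and $E_8\oplus D_4$.

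It then remains to check that $M$ must be isomorphic to $D_{12}$ or $E_8\oplus D_4$. For this, I would use that Hermite's constant in rank $12$ combined with $\det M=4$ forces $M$ to have minimum norm $2$, so $M$ contains a full-rank root sublattice $R$. Running through rank-$12$ root lattices $R$ with $\det R$ dividing $4\cdot[M:R]^2$ and then checking, via an Arf-invariant obstruction on the possible glue codes, which ones admit an overlattice with the required discriminant form, leaves only $R=M=D_{12}$ and $R=M=E_8\oplus D_4$. Rescaling gives $H=\sqrt 2 M\cong \sqrt 2 D_{12}$ or $\sqrt 2 E_8\sqrt 2 D_4$, as claimed. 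The main obstacle is this final enumeration step; alternatively, it can be disposed of by a direct Smith--Minkowski--Siegel mass-formula computation for the genus of $\sqrt{2}D_{12}$, which also returns exactly two classes.
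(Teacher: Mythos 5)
Your reduction to the rescaled lattice $M=\tfrac{1}{\sqrt2}H$ is correct and is exactly the first half of the paper's proof: the paper likewise deduces $\tfrac12 H\subset H^*$ from the shape of $\mathcal{D}(H)$ and the evenness of $\tfrac{1}{\sqrt2}H$ from the values of $q_H$ on the $2$-torsion (your observation that these values lie in $\{0,1/2\}$, rather than in $\Z$, is in fact the accurate version of the paper's statement), arriving at an even lattice of rank $12$ and determinant $4$ with the discriminant form of $D_{12}$.

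The gap is in your endgame. From ``$M$ has minimum norm $2$'' you conclude ``so $M$ contains a full-rank root sublattice $R$''; this is a non sequitur. A minimum-norm bound produces one root, not twelve independent ones, and ruling out a root system of rank $r<12$ requires a separate argument (the orthogonal complement $K=R^\perp\cap M$ is even with minimum at least $4$, but its determinant is not directly controlled by $\det M=4$, so the Hermite bound does not immediately kill it — e.g.\ for $r=11$ one has to argue via the glue group between $R$ and $K$). The subsequent enumeration of candidate root lattices and glue codes is also only asserted, and it is not short: one must run through $A_1^{12}$, $D_4^3$, $D_4\oplus D_8$, $E_7$-components, etc., and discard overlattices whose glue vectors are themselves roots. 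Your fallback via the Smith--Minkowski--Siegel mass of the genus of $\sqrt2D_{12}$ is sound in principle (two even positive-definite lattices of equal rank with isomorphic discriminant forms lie in one genus, so the mass computation does settle the lemma), but it is deferred to an unperformed computation. The paper avoids both issues by a slicker step: since $\mathcal{D}(M)\cong\Z_2^2$ carries an anisotropic vector, $M$ is the even sublattice of an odd unimodular lattice of rank $12$, and the classification of unimodular lattices in dimension $12$ then yields $M\cong D_{12}$ or $E_8\oplus D_4$ at once. If you want a complete argument along your lines, I would either carry out the mass computation or adopt the unimodular-overlattice step.
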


\begin{proof} It follows from $\mathcal{D}(H)\cong\Z_2^{10}\times\Z_4^2$ and $\rank H=12$ that $(1/2)H\subset H^*$.
In addition, it also follows from $q_{\sqrt2D_{12}}((1/2)(\sqrt2{D}_{12}))\subset \Z$ that $q_H((1/2)H)\subset\Z$.
Hence $(1/\sqrt2)H$ is even.
Since $\mathcal{D}((1/\sqrt2)H)\cong\Z_2^2$ and the rank of $(1/\sqrt2)H$ is $12$, there exists an odd unimodular lattice of rank $12$ whose even sublattice is $(1/\sqrt2)H$.
Since any odd unimodular lattice of rank $12$ is isometric to $\Z^{12}$ or $E_8\Z^4$ (cf. \cite{CS}), we have $(1/\sqrt2)H\cong D_{12}$ or $E_8D_4$
\end{proof}

By this lemma, we have $L\cong\sqrt2D_{12}$ or $L\cong\sqrt2E_8\sqrt2D_4$.
We will discuss each case in the following subsections.

\subsubsection{Case $L\cong \sqrt{2}D_{12}$}\label{S451}
In this case, 
$\mu_L$ is injective, that is, $\overline{O}(L)\cong O(L)\cong 2^{12}.\Sym_{12}$. 
Recall that $O(\mathcal{D}(L),q_L)\cong O(\irr(W),q_W)\cong 2^{1+20}_+.(GO^-_{10}(2)\times\Sym_3)$.
Note that $\overline{O}(L)\cap O_2(O(\mathcal{D}(L),q_L))\cong 2^{11}$, where $O_2(O(\mathcal{D}(L),q_L))\cong 2^{1+20}_+$ is the maximal normal $2$-subgroup of $O(\mathcal{D}(L),q_L)$.
Fix an isometry $i: (\mathcal{D}(L),q_L) \to  (\irr(W), -q_W)$.
By Lemma \ref{DCvsConj} and Remark \ref{DCcount}, the number of double cosets of  $$ i^*( \overline{\Aut}(W))\backslash O(\mathcal{D}(L),q_L) /\overline{O}(L)$$
 is equal to the number of $2^{12}.\Sym_{12}$-orbits on the set of all $O(\irr(W),q_W)$-conjugates of the subgroup $2^{1+20}_+.(\Sym_{12}\times\Sym_3)$. Since $2^{1+20}_+.\Sym_3$ is a normal subgroup of $O(\irr(W),q_W)$ and the quotient of $O(\irr(W),q_W)$ by this normal subgroup is $GO_{10}^-(2)$, 
this number is also equal to the number of $\Sym_{12}$-orbits on its conjugates in $GO_{10}^-(2)$. 
By Remark \ref{R:S12}, there is a natural embedding $\Sym_{12}\subset GO^-_{10}(2)$, and by \cite[page 147]{ATLAS} (cf.\ \cite[Remark 2.9]{HS}), there exist six $\Sym_{12}$-orbits on its conjugates in $GO_{10}^-(2)$.
Hence we obtain the following lemma.

\begin{lemma}\label{2C1}
There exist exactly $6$ double cosets in $ i^*( \overline{\Aut}(W))\backslash O(\mathcal{D}(L),q_L) /\overline{O}(L)$. 
\end{lemma}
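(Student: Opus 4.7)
The plan is to follow the reduction sketched in the paragraph preceding the lemma and make each step precise. First I would invoke Lemma \ref{DCvsConj} together with Remark \ref{DCcount}: since $\overline{\Aut}(W)$ is maximal and self-normalizing in $O(\irr(W),q_W)$, its pullback $i^*(\overline{\Aut}(W))$ is maximal and self-normalizing in $O(\mathcal{D}(L),q_L)$, so the number of double cosets in $i^*(\overline{\Aut}(W))\backslash O(\mathcal{D}(L),q_L)/\overline{O}(L)$ equals the number of $\overline{O}(L)$-orbits (by conjugation) on the set of $O(\mathcal{D}(L),q_L)$-conjugates of $i^*(\overline{\Aut}(W))$.

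Next I would reduce modulo a convenient normal subgroup. The subgroup $N:=2^{1+20}_+.\Sym_3$ is normal in $O(\irr(W),q_W)$ and is contained in $\overline{\Aut}(W)$; identifying via $i$, the pullback $i^*(N)$ is normal in $O(\mathcal{D}(L),q_L)$ and contained in $i^*(\overline{\Aut}(W))$. Hence all $O(\mathcal{D}(L),q_L)$-conjugates of $i^*(\overline{\Aut}(W))$ contain $i^*(N)$, so the conjugation action descends to the quotient $O(\mathcal{D}(L),q_L)/i^*(N) \cong GO^-_{10}(2)$, where $i^*(\overline{\Aut}(W))$ becomes the subgroup $\Sym_{12}\subset GO^-_{10}(2)$ of Remark \ref{R:S12}.

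Then I would identify the image of $\overline{O}(L)\cong 2^{12}.\Sym_{12}$ under the quotient map. The normal $2$-subgroup $\overline{O}(L)\cap i^*(N)\cong 2^{11}$ (this intersection is just the image of $O_2(O(\mathcal D(L), q_L))$ met with $\overline{O}(L)$), and the induced image in $GO^-_{10}(2)$ is precisely $\Sym_{12}$ embedded as in Remark \ref{R:S12}. Therefore the count reduces to the number of $\Sym_{12}$-orbits (by conjugation) on the $GO^-_{10}(2)$-conjugates of this same $\Sym_{12}$, equivalently the number of orbits of $\Sym_{12}$ on $GO^-_{10}(2)/N_{GO^-_{10}(2)}(\Sym_{12})=GO^-_{10}(2)/\Sym_{12}$.

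Finally I would cite the character-table/permutation data in \cite[p.\ 147]{ATLAS} (see also \cite[Remark 2.9]{HS}): the permutation character of $GO^-_{10}(2)$ on the cosets of $\Sym_{12}$ has inner product $6$ with itself, equivalently the double coset space $\Sym_{12}\backslash GO^-_{10}(2)/\Sym_{12}$ has cardinality $6$. The main obstacle is really just justifying this last number; the rest is a clean series of quotienting arguments using the maximality and normality facts already assembled. Concretely I would record the ATLAS datum and, if needed, verify it computationally in MAGMA to keep the proof self-contained.
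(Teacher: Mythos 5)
Your proposal is correct and follows essentially the same route as the paper: reduce to counting $\overline{O}(L)$-orbits on conjugates of $i^*(\overline{\Aut}(W))$ via Lemma \ref{DCvsConj} and Remark \ref{DCcount}, pass to the quotient by the normal subgroup $2^{1+20}_+.\Sym_3$ to land in $\GO_{10}^-(2)$, and read off the six $\Sym_{12}$-orbits from \cite[p.~147]{ATLAS}. One cosmetic slip: for your claim that the image of $\overline{O}(L)$ in $\GO_{10}^-(2)$ is exactly $\Sym_{12}$, the kernel $\overline{O}(L)\cap i^*(N)$ must have order $2^{12}$, not $2^{11}$ (it is $\overline{O}(L)\cap O_2(O(\mathcal{D}(L),q_L))$, of order $2^{11}$, extended by an element mapping nontrivially into the $\Sym_3$ factor), but this does not affect the argument.
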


By Proposition \ref{P:Hoext} and the lemma above, we obtain $6$ holomorphic VOAs of central charge $24$ as inequivalent simple current extensions.
In fact, their weight one Lie algebras are non-isomorphic, which is discussed in \cite[Remark 2.12]{HS} and \cite[Table 8]{Ho2} (see Table \ref{2c1} for the Lie algebra structures).
Hence we obtain the following:

\begin{proposition}\label{P:uni2Ca}
Assume that the conjugacy class of $g$ is $2C$ and that $L\cong\sqrt{2}D_{12}$.
Then there exist exactly $6$ holomorphic VOAs of central charge $24$ obtained as inequivalent simple current extensions of $V_L\otimes W$, up to isomorphism.
\end{proposition}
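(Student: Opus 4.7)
The plan is to combine Proposition \ref{P:Hoext} with Lemma \ref{2C1} to count equivalence classes of simple current extensions, and then to separately verify that inequivalent simple current extensions in this situation give pairwise non-isomorphic VOAs.

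First, I would invoke Proposition \ref{P:Hoext}, which identifies the number of inequivalent simple current extensions of $V_L\otimes W$ among $\{V_\psi\mid\psi\in\mathcal{O}\}$ with the number of double cosets $|i^*(\overline{\Aut}(W))\backslash O(\mathcal{D}(L),q_L)/\overline{O}(L)|$ for any fixed isometry $i\colon(\mathcal{D}(L),q_L)\to(\irr(W),-q_W)$. Lemma \ref{2C1} says there are exactly six such double cosets, so there are at most $6$ non-isomorphic holomorphic VOAs of central charge $24$ arising this way, and exactly $6$ up to equivalence as simple current extensions of $V_L\otimes W$.

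The main obstacle, as flagged in the remark just before Proposition \ref{P:Hoext}, is that in principle two inequivalent simple current extensions could still be isomorphic as abstract VOAs (such an isomorphism need not preserve the distinguished full subVOA $V_L\otimes W$). To rule this out here, I would use the weight one Lie algebra as an isomorphism invariant. By Table \ref{T:Lie}, genus $D$ contains $9$ Schellekens Lie algebras, and by Lemma \ref{L:2C} the orbit lattice of each is either $\sqrt2D_{12}$ or $\sqrt2E_8\sqrt2D_4$. I would identify which six of the nine correspond to $L\cong\sqrt2D_{12}$ (by computing the orbit lattice $L_\g$ from the level and root data using $Q_\g\subset L_\g\subset(1/\sqrt\ell)Q_\g^*$ as in Proposition \ref{L:KV2} and the formulas of Section \ref{S:31}, or equivalently by matching the explicit Niemeier-lattice description of $L_\g$ in \cite{Ho2}); item (b) of the introduction guarantees that each such $\g$ is actually realized as $V_1$ for some holomorphic VOA $V$ of central charge $24$, and such a $V$ is a simple current extension of $V_{L_\g}\otimes W$ by the discussion in Section \ref{S:31}. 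This furnishes six simple current extensions of $V_L\otimes W$ whose weight one Lie algebras are pairwise non-isomorphic, hence which are pairwise non-isomorphic as VOAs.

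Finally, combining the two counts, the six pairwise non-isomorphic VOAs just exhibited already saturate the upper bound of six equivalence classes of simple current extensions. Therefore each equivalence class is realized by a unique isomorphism class of holomorphic VOA, and the six classes are in bijection with the six Lie algebras in Table \ref{T:Lie} whose orbit lattice is $\sqrt2D_{12}$. This gives the proposition. The only nontrivial verification beyond citing Lemma \ref{2C1} is the identification of exactly which six Schellekens algebras have $L_\g\cong\sqrt2D_{12}$, which is a direct computation from the level-weighted root data recorded in Section \ref{S:31}.
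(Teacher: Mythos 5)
Your proposal is correct and follows essentially the same route as the paper: both count the equivalence classes via Proposition \ref{P:Hoext} together with the double-coset computation of Lemma \ref{2C1}, and both then separate the six classes by their weight one Lie algebras. The only difference is in how the distinctness of those Lie algebras is justified: the paper cites the direct identification of the $V_1$'s of the six extensions in \cite[Remark 2.12]{HS} and \cite[Table 8]{Ho2}, whereas you run a pigeonhole argument from item (b) and the orbit-lattice computation showing that exactly six of the nine genus $D$ algebras have $L_\g\cong\sqrt2D_{12}$; this is a legitimate alternative provided that computation (which matches Tables \ref{2c1} and \ref{2c1b}) is carried out.
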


In \cite[Page 147]{ATLAS}, the shapes of the $6$ subgroups of $GO^-_{10}(2)$ obtained as the intersection of two maximal subgroups isomorphic to $\Sym_{12}$ are described.
These groups appear as the quotient of
$\overline{O}(L)\cap \varphi^{*}(\overline{\Aut}(W))$ by $O_2(\overline{O}(L))\cong 2^{12}$ for isometries $\varphi$ from $(\mathcal{D}(L),q_L)$ to $(\irr(W), -q_W)$.
By Proposition \ref{P:OV}, we obtain $\Out(V)$ as in Table \ref{2c1}.
For any weight one Lie algebra structure in Table \ref{2c1}, we have $\tilde{Q}\cong (1/\sqrt2)\Z^{12}$.
By Proposition \ref{L:KV2} and $L^*\cong (1/\sqrt2)D_{12}^*$, we have $K(V)\cong\Z_2$.

\begin{longtable}[c]{|c|c|c|c|c|c|c|c|} 
\caption{$K(V)$ and $\Out(V)$ for the case $2C$ and $L\cong \sqrt{2}D_{12}$} \label{2c1}\\
\hline 
Genus&No.& $V_1$ & $\overline{O}(L)\cap \varphi^{*}(\overline{\Aut}(W))$ & $W(V_1)$ &$  \OOut(V_1)$& $\Out(V)$&$K(V)$ \\ 
\hline 
\hline 
$D$&$2$ &$A_{1,4}^{12}$ & $2^{12}.M_{12}$  & $W(A_1)^{12} $  & $\Sym_{12}$& $M_{12}$&   $\Z_2$\\
&$12$ &$B_{2,2}^6$ & $2^{12}.(\Z_2^6: \Sym_5)$ & $W(B_2)^6$ & $\Sym_6$& $\Sym_5$& $\Z_2$\\
&$23$ &$B^4_{3,2}$ & $2^{12}.(\Sym_{3}\wr \Alt_4)$ & $ W(B_3)^4 $ &$\Sym_4$& $\Alt_4$&   $\Z_2$\\
&$29$ &$B_{4,2}^3$ & $2^{12}.(\Sym_{4}\wr \Sym_3)$ & $ W(B_4)^3 $ & $\Sym_3$&  $\Sym_3$& $\Z_2$ \\
&$41$ &$B^2_{6,2}$ & $2^{12}.(\Sym_{6}\wr 2)$& $W(B_6)^2$ & $\Sym_2$& $\Z_2$& $\Z_2$  \\
&$57$ &$B_{12,2}$ & $2^{12}.\Sym_{12}$& $W(B_{12})$ & $1$& $1$ &  $\Z_2$  \\ 
\hline 
\end{longtable}

\begin{remark} For the cases in Table \ref{2c1}, the groups $K(V)$ and $\Out(V)$ have been determined in \cite{Sh20} by using the explicit construction of $V$.
\end{remark}

\subsubsection{Case $L\cong \sqrt{2}E_8\sqrt2D_4$}
In this case, $O(L)\cong O(D_4)\times O(E_8)\cong ((2^{1+4}.\Sym_3){:}\Sym_3)\times 2.GO^+_8(2)$.
In addition, $O_0(L)$ is generated by the $-1$-isometry of $\sqrt2E_8$ and $\overline{O}(L)\cong O(D_4)\times (O(E_8)/\langle -1\rangle)\cong ((2^{1+4}.\Sym_3){:}\Sym_3)\times GO^+_8(2)$. 
We can rewrite as $\overline{O}(L)\cong2^{1+4}.((\Sym_3\times GO^+_8(2))\times\Sym_3)$, which corresponds to the shape of $O(\irr(W),q_W)
\cong 2^{1+20}_+.(GO^-_{10}(2)\times \Sym_3)$.

Fix an isometry $i: (\mathcal{D}(L),q_L) \to  (\irr(W), -q_W)$.
By Lemma \ref{DCvsConj}, the number of double cosets is equal to the number of $2^{1+4}.((\Sym_3\times GO^+_8(2))\times\Sym_3)$-orbits on the set of conjugates of $2^{1+20}_+.(\Sym_{12}\times\Sym_3)$.
It follows from $O_2(O(\irr(W),q_W))\cong 2^{1+20}_+$ that the number is also equal to the number of $(\Sym_3\times GO^+_8(2))$-orbits on the set of conjugates of the subgroup $\Sym_{12}\subset GO_{10}^{-}(2)$. 

\begin{lemma}\label{2C2}
There exist exactly $3$ double cosets in $ i^*( \overline{\Aut}(W))\backslash O(\mathcal{D}(L),q_L) /\overline{O}(L)$.
\end{lemma}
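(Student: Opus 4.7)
The plan is to count $(\Sym_3 \times GO_8^+(2))$-orbits on the set of subgroups of $GO_{10}^-(2)$ conjugate to $\Sym_{12}$; by the reduction immediately preceding the lemma (using that $2_+^{1+20}.\Sym_3$ is normal in $O(\irr(W),q_W)$ with quotient $GO_{10}^-(2)$, and that $\Sym_{12}$ is self-normalizing in $GO_{10}^-(2)$), this orbit count equals the number of double cosets in question via Lemma \ref{DCvsConj} and Remark \ref{DCcount}.

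First I would make the embedding $\Sym_3 \times GO_8^+(2) \hookrightarrow GO_{10}^-(2)$ explicit. The orthogonal decomposition $L\cong \sqrt 2 E_8 \perp \sqrt 2 D_4$ induces a compatible orthogonal decomposition of $(\mathcal{D}(L),q_L)$, and under the identification $(\mathcal{D}(L),q_L)\cong(\irr(W),-q_W)$ modulo the action of the normal subgroup $O_2(O(\irr(W),q_W))\cong 2_+^{1+20}$, this expresses the $10$-dimensional $\F_2$-quadratic space underlying $GO_{10}^-(2)$ as a perpendicular sum $V_2^- \perp V_8^+$. The two summands are stabilized by $\overline{O}(\sqrt 2 D_4) \cong \Sym_3$ and $\overline{O}(\sqrt 2 E_8) \cong GO_8^+(2)$ respectively, giving the required embedding.

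Second, I would invoke the description (ATLAS, page 147) of the $6$ conjugacy classes of $\Sym_{12}$-orbits on its conjugates in $GO_{10}^-(2)$, indexed by the shape of $\Sym_{12}\cap g\Sym_{12}g^{-1}$; this is the same list used in Lemma \ref{2C1}. Fix representatives $H_1,\dots,H_6$ of these $\Sym_{12}$-orbits. I would then determine which pairs $(H_i,H_j)$ become conjugate under the smaller subgroup $\Sym_3 \times GO_8^+(2)$, by computing the double cosets $(\Sym_3\times GO_8^+(2))\backslash GO_{10}^-(2)/\Sym_{12}$ directly in MAGMA (as is standard in the paper).

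The main obstacle is the bookkeeping in identifying a concrete copy of $\Sym_3\times GO_8^+(2)$ inside $GO_{10}^-(2)$ that matches the embedding coming from $O(L)/O_0(L)$, and keeping the identifications between $(\mathcal{D}(L),q_L)$ and $(\irr(W),-q_W)$ consistent throughout. The expected outcome, verified by the MAGMA computation, is that the six $\Sym_{12}$-orbits merge into exactly three $(\Sym_3\times GO_8^+(2))$-orbits, consistent with the three semisimple Lie algebras of Genus $D$ in Schellekens' list whose orbit lattice is isometric to $\sqrt 2 E_8 \sqrt 2 D_4$ (namely $D_{4,4}A_{2,2}^4$, $C_{4,2}A_{4,2}^2$ and $A_{8,2}F_{4,2}$).
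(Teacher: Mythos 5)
Your reduction to counting the double cosets $(\Sym_3\times GO_8^+(2))\backslash GO_{10}^-(2)/\Sym_{12}$ is exactly the paper's, and a direct MAGMA computation of that double coset count would indeed return $3$, so the proposal can be made to work. The paper, however, finishes by hand: since $\Sym_3\times GO_8^+(2)$ is the stabilizer in $GO_{10}^-(2)$ of a non-singular $2$-space of minus type, the double cosets are in bijection with the $\Sym_{12}$-orbits on such $2$-spaces inside the combinatorial model $\mathcal{X}$ of Remark \ref{R:S12}; a minus-type $2$-space consists of three non-singular vectors, i.e.\ $2$-sets or $6$-sets modulo $\{\emptyset,\Omega\}$, and the only realizable patterns are three $2$-sets, three $6$-sets, or one $2$-set and two $6$-sets, giving exactly three orbits. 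Besides avoiding machine computation, this identification of the orbits with concrete $2$-spaces is reused afterwards: the stabilizers $\Sym_3\times\Sym_9$, $\Sym_3\wr\Sym_4$ and $\Sym_2\times\Sym_5\wr\Sym_2$ of the three orbit representatives are what the paper feeds into the computation of $\Out(V)$ and the identification of the three Lie algebras.

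One step of your plan is genuinely flawed and should be dropped: taking the six representatives $H_1,\dots,H_6$ of the $\Sym_{12}$-orbits from Lemma \ref{2C1} and checking ``which pairs become conjugate under $\Sym_3\times GO_8^+(2)$'' does not compute the orbit count. Since $\Sym_3\times GO_8^+(2)$ does not contain $\Sym_{12}$ (both are maximal in $GO_{10}^-(2)$), its orbits on the set of conjugates of $\Sym_{12}$ are not unions of the six $\Sym_{12}$-orbits; the two partitions of that set are incomparable. In particular, a conjugate of $\Sym_{12}$ is $\Sym_{12}$-conjugate to some $H_i$ but need not be $(\Sym_3\times GO_8^+(2))$-conjugate to any $H_i$, so knowing how the six representatives distribute among $(\Sym_3\times GO_8^+(2))$-orbits gives neither an upper nor a lower bound on the total number of orbits. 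The ``six orbits merge into three'' picture is therefore not correct as stated; what actually establishes the count is the double coset computation itself (or the paper's combinatorial argument with minus-type $2$-spaces).
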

\begin{proof} 
Recall from Remark \ref{R:S12} the construction of a $10$-dimensional quadratic space $\mathcal{X}$ over $\F_2$ of minus type with natural embedding $\Sym_{12}\subset GO_{10}^-(2)$.
It is well-known (cf. \cite{ATLAS,Wi}) that the stabilizer in $GO_{10}^-(2)$ of a non-singular $2$-space of minus-type is a maximal subgroup of the shape $\Sym_3\times GO^+_8(2)$.
By the definition of the quadratic form in Remark \ref{R:S12}, non-singular vectors of $\mathcal{X}$ are $2$-sets or $6$-sets modulo $\{\emptyset,\Omega\}$.
Then there exist exactly three orbits $\mathcal{Q}_i$ ($i=1,2,3$) of non-singular $2$-spaces of minus type in $\mathcal{X}$ under the action of $\Sym_{12}$.
Here the non-zero vectors of $\mathcal{Q}_1$, $\mathcal{Q}_2$, $\mathcal{Q}_3$ are three $2$-sets, three $6$-sets, or one $2$-set and two $6$-sets, respectively.
One can then deduce that there exist exactly $3$ $(\Sym_3\times GO^+_8(2))$-orbits on the set of conjugates of the subgroup $\Sym_{12}\subset GO_{10}^-(2)$. 
\end{proof}

By Proposition \ref{P:Hoext} and this lemma, we obtain $3$ holomorphic VOAs of central charge $24$ as inequivalent simple current extensions.
In fact, their weight one Lie algebras are non-isomorphic, which is discussed in \cite[Remark 2.12]{HS} and \cite[Table 8]{Ho2} (see Table \ref{2c1b} for the Lie algebra structures.)
Hence we obtain the following:

\begin{proposition}\label{P:uni2Cb}
Assume that the conjugacy class of $g$ is $2C$ and that $L\cong\sqrt{2}E_8\sqrt2D_4$.
Then there exist exactly $3$ holomorphic VOAs of central charge $24$ obtained as inequivalent simple current extensions of $V_L\otimes W$, up to isomorphism.
\end{proposition}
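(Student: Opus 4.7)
The plan is to read off the count from Lemma \ref{2C2} via Proposition \ref{P:Hoext}, and then upgrade ``inequivalent simple current extensions'' to ``pairwise non-isomorphic VOAs'' by a weight-one argument.

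First I would invoke Proposition \ref{P:Hoext}: the number of holomorphic VOAs of central charge $24$ realized as inequivalent simple current extensions of $V_L\otimes W$ is $|i^{*}(\overline{\Aut}(W))\backslash O(\mathcal{D}(L),q_L)/\overline{O}(L)|$, which by Lemma \ref{2C2} is exactly $3$. This already shows there are \emph{at most} three isomorphism classes of such extensions.

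Next I would distinguish these three by their weight one subspaces. Table \ref{T:Lie} (genus $D$) lists the nine semisimple Lie algebras attached to the class $2C$; Table \ref{2c1} accounts for the six whose orbit lattice is $\sqrt{2}D_{12}$, namely $A_{1,4}^{12}$ and $B_{n,2}^{k}$ for the four relevant pairs $(n,k)$ together with $B_{12,2}$. The remaining three, $D_{4,4}A_{2,2}^4$, $C_{4,2}A_{4,2}^2$ and $A_{8,2}F_{4,2}$, therefore have orbit lattice isometric to $\sqrt{2}E_8\sqrt{2}D_4$ by the uniqueness of the orbit lattice attached to $\g$ (established in the preceding sections). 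Assuming item (b) from the Introduction for these three cases (the existence of a holomorphic VOA of central charge $24$ realizing each of these Lie algebras as $V_1$), each such VOA appears in the list of simple current extensions of $V_L\otimes W$, so the three inequivalent extensions must realize the three listed Lie algebras; as these are mutually non-isomorphic, the corresponding VOAs are pairwise non-isomorphic.

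The main obstacle is matching each of the three double coset representatives in $i^{*}(\overline{\Aut}(W))\backslash O(\mathcal{D}(L),q_L)/\overline{O}(L)$ to its correct Schellekens entry. In principle one reconstructs $V_1$ from \eqref{Eq:V} by identifying which irreducible $W$-modules of smallest positive conformal weight are paired with which $V_L$-modules of smallest positive minimum; the three $(\Sym_3\times\GO_8^+(2))$-orbits of the proof of Lemma \ref{2C2} are separated by a simple combinatorial invariant (the distribution of $2$-sets and $6$-sets among the non-zero vectors of the chosen non-singular minus-type $2$-space in $\mathcal{X}$), and one checks this invariant is compatible with the root system types of $D_{4,4}A_{2,2}^4$, $C_{4,2}A_{4,2}^2$ and $A_{8,2}F_{4,2}$ respectively. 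In practice the pigeonhole argument above suffices: three candidate Lie algebras realized by holomorphic VOAs of central charge $24$, filling at most three available slots, must fill them bijectively.
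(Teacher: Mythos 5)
Your argument is correct and, for the core count, identical to the paper's: both apply Proposition \ref{P:Hoext} to Lemma \ref{2C2} to obtain exactly three equivalence classes of simple current extensions of $V_L\otimes W$. The only genuine difference lies in the final upgrade from ``three inequivalent extensions'' to ``three pairwise non-isomorphic VOAs'': the paper disposes of this by citing \cite[Remark 2.12]{HS} and \cite[Table 8]{Ho2} for the fact that the three resulting weight-one Lie algebras are $D_{4,4}A_{2,2}^4$, $C_{4,2}A_{4,2}^2$ and $A_{8,2}F_{4,2}$, whereas you derive it by pigeonhole from the standing assumption (b): the nine class-$2C$ Lie algebras of Table \ref{T:Lie} are distributed over the two orbit lattices allowed by Lemma \ref{L:2C}, six of them are absorbed by the $\sqrt{2}D_{12}$ case, so the remaining three distinct Lie algebras must each be realized by an extension of $V_{\sqrt{2}E_8\sqrt{2}D_4}\otimes W$, and three distinct Lie algebras occupying exactly three equivalence classes forces the three VOAs to be pairwise non-isomorphic. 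This is sound and arguably more self-contained within the paper's framework, which explicitly assumes (b). The one caveat is that your identification of \emph{which} six Lie algebras belong to $\sqrt{2}D_{12}$ (via Table \ref{2c1}) itself traces back to the same external references; this can be avoided entirely by pure counting, since Lemmas \ref{2C1} and \ref{2C2} cap the two families at $6$ and $3$ respectively and $6+3=9$ forces equality, which is all your pigeonhole needs. Your closing remark that one need not match individual double cosets to individual Schellekens entries is exactly right.
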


{\tiny
\begin{longtable}[c]{|c|c|c|c|c|c|c|c|} 
\caption{$K(V)$ and $\Out(V)$ for case $2C$ and $L\cong \sqrt{2}E_8\sqrt2D_4$} \label{2c1b}\\
\hline 
Genus&No. & $V_1$& $\mu_{L}^{-1}(\overline{O}(L)\cap \varphi^{*}(\overline{\Aut}(W)))$ & $W(V_1)$ &$  \OOut(V_1)$& $\Out(V)$ &$K(V)$\\ 
\hline 
\hline 
$D$&$13$ &$D_{4,4}A^4_{2,2}$ & $2.((W(D_4)\times W(A_2)^4).\Sym_4)$ & $W(D_4)\times W(A_2)^4$&$\Sym_3\times\Z_2\wr\Sym_4$ & $2.\Sym_4$&  $\Z_3^2$\\
&$22$ &$C_{4,2}A^2_{4,2}$& $W(C_4)\times 2.(W(A_4)^2).2$& $W(C_4)\times W(A_4)^2$ &$\Z_2\wr\Sym_2$& $\Z_4$& $\Z_5$ \\
&$36$ &$ A_{8,2}F_{4,2}$ & $2.(W(A_8)\times W(F_4))$& $W(A_8)\times W(F_4)$&$\Z_2$  & $\Z_2$  &   $\Z_3$ \\ 

\hline 
\end{longtable}
}

Let $\mathcal{Q}_i$ $(i=1,2,3)$ be non-singular $2$-spaces of minus type in $\mathcal{X}$ given in the proof of Lemma \ref{2C2}.
Then the stabilizers of $\mathcal{Q}_1$, $\mathcal{Q}_2$ and $\mathcal{Q}_3$ in $\Sym_{12}$ are $\Sym_3\times\Sym_9$, $\Sym_3\wr\Sym_4$ and $\Sym_2\times\Sym_5\wr\Sym_2$, respectively.
Let $\varphi_i$ be an isometry from $(\mathcal{D}(L),q_L)$ to $(\irr(W), -q_W)$ associated with $\mathcal{Q}_i$. 
Then $\overline{O}(L)\cap \varphi^*_i(\overline{\Aut}(W))$ has the shapes $2^{1+4}.(\Sym_3\times\Sym_9).\Sym_3$, $2^{1+4}.(\Sym_3\wr\Sym_4).\Sym_3$ and $2^{1+4}.(\Sym_2\times\Sym_5\wr\Sym_2).\Sym_3$, respectively.
By the shapes of these groups, the corresponding Lie algebra structures of $V_1$ are $A_{8,2}F_{4,2}$, $D_{4,4}A_{2,2}^4$ and $C_{4,2}A_{4,2}^2$, respectively.
Note that the Weyl groups of the simple ideals of type $F_{4,2}$, $D_{4,4}$ and $C_{4,2}$ act as the diagram automorphism group $\Sym_3$ on $\sqrt{2}D_4\subset L$, respectively.
The group $\mu_L^{-1}(\overline{O}(L)\cap\varphi^*_i(\overline{\Aut}(W)))$ is a central extension of $\overline{O}(L)\cap\varphi^*_i(\overline{\Aut}(W))$ by $O_0(L)\cong\Z_2$.
Since $\mu_L^{-1}(\overline{O}(L)\cap\varphi^*_i(\overline{\Aut}(W)))$ contains $W(V_1)$, we can rewrite the shapes of $\mu_L^{-1}(\overline{O}(L)\cap\varphi^*_i(\overline{\Aut}(W)))$ as $2.(W(A_8)\times W(F_4))$, $2.(W(D_4)\times W(A_2)^4).\Sym_4$ and $2.(W(C_4)\times (W(A_4)^2).2)$, respectively.
Note that for the case $C_{4,2}A_{4,2}^2$, the subgroup $2.(W(A_4)^2).2$ is the stabilizer in $O(E_8)$ of the sublattice $A_4^2$ of $E_8$, and its quotient by $W(A_4)^2$ is isomorphic to $\Z_4$.
Hence we obtain the shape of $\Out(V)$ as in  Table \ref{2c1b} by Proposition \ref{P:OV}.

Recall that $L^*\cong (1/\sqrt2)D_4^*(1/\sqrt2)E_8$.
If the Lie algebra structure of $V_1$ is $A_{8,2}F_{4,2}$, $C_{4,2}A_{4,2}^2$ or $D_{4,4}A_{2,2}^4$, then $\tilde{Q}$ is isometric to $(1/\sqrt2)A_8(1/\sqrt2)D_4^*$, $(1/2)D_4(1/\sqrt2)A_4^2$ and $(1/2)D_4(1/\sqrt2)A_2^4$, respectively.
Note that $(1/\sqrt2)D_4^*\cong(1/2)D_4$.
By Proposition \ref{L:KV2}, we obtain $K(V)\cong L^*/\tilde{Q}$ as in Table \ref{2c1b}.

\begin{proposition} Assume that $g$ belongs to the conjugacy class $2C$.
The shapes of the groups $K(V)$ and $\Out(V)$ are given as in Tables \ref{2c1} and \ref{2c1b}.
\end{proposition}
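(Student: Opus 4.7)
The plan is to combine the results already established in Sections \ref{S451} and the subsection on $L \cong \sqrt{2}E_8\sqrt2D_4$, by splitting into the two possibilities for $L$ afforded by Lemma \ref{L:2C}. Since $L \cong \sqrt{2}D_{12}$ or $L \cong \sqrt{2}E_8\sqrt{2}D_{4}$, it suffices to verify the two tables case by case. For each Lie algebra $\g = V_1$ listed in Tables \ref{2c1} and \ref{2c1b}, the orbit lattice $L = L_\g$ is uniquely determined up to isometry by the quadratic-space condition $(\mathcal{D}(L),q_L) \cong (\irr(W),-q_W)$ together with the classification in Lemma \ref{L:2C}, and which of the two lattices occurs is read off from $\tilde{Q} = \bigoplus_{i=1}^s (1/\sqrt{k_i})Q^i$ by matching its localization at each simple ideal of $\g$.

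First I would compute $K(V)$ uniformly via Proposition \ref{L:KV2}, which gives $K(V) \cong L^*/\tilde{Q}$. For the six algebras of Table \ref{2c1} we have $\tilde{Q} \cong (1/\sqrt{2})\Z^{12}$ and $L^* \cong (1/\sqrt{2})D_{12}^*$, so $K(V) \cong D_{12}^*/\Z^{12} \cong \Z_2$ in every line. For the three algebras of Table \ref{2c1b}, $L^* \cong (1/\sqrt{2})D_4^* (1/\sqrt{2})E_8$, and one reads off the three values $\Z_3$, $\Z_5$, $\Z_3^2$ according to whether $\tilde{Q}$ restricts to $(1/\sqrt{2})A_8$, $(1/\sqrt{2})A_4^2$, or $(1/\sqrt{2})A_2^4$ on the $E_8$ component (with $(1/2)D_4$ on the other summand). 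These are routine finite-index computations.

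For $\Out(V)$, the main input is Proposition \ref{P:OV}, which identifies $\Out(V)$ with $\mu_L^{-1}(\overline{O}(L) \cap \varphi^*(\overline{\Aut}(W)))/W(V_1)$ for the appropriate isometry $\varphi$. In the $\sqrt{2}D_{12}$ case, $\mu_L$ is injective and $\overline{O}(L) \cap \varphi^*(\overline{\Aut}(W))$ is an index $|O_2(\overline{O}(L))|$ extension of one of the six subgroups of $GO_{10}^{-}(2)$ arising as $\Sym_{12}$-stabilizers in the orbit-counting argument of Lemma \ref{2C1}; these are precisely the groups listed on page 147 of \cite{ATLAS}, and dividing by $W(V_1) \cong W(B_{n_i})^{m_i}$ yields the six entries of the $\Out(V)$ column. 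In the $\sqrt{2}E_8\sqrt2D_4$ case, $O_0(L) \cong \Z_2$ is generated by the $-1$-isometry of $\sqrt{2}E_8$, and the three conjugacy classes of subgroups identified in Lemma \ref{2C2} via the non-singular $2$-spaces $\mathcal{Q}_i$ of minus type in $\mathcal{X}$ give stabilizers of shape $\Sym_3 \times \Sym_9$, $\Sym_3 \wr \Sym_4$, and $\Sym_2 \times \Sym_5 \wr \Sym_2$ inside $\Sym_{12}$; combined with the outer $\Sym_3$ acting on the $\sqrt{2}D_4$ block (realized as the triality of $F_{4,2}$, $D_{4,4}$, or $C_{4,2}$, respectively), one recovers the three shapes $\Z_2$, $2.\Sym_4$, $\Z_4$ for $\Out(V)$ after quotienting by the appropriate Weyl group.

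The main obstacle is the matching step: identifying, for each Lie algebra $\g$, which of the three orbits $\mathcal{Q}_i$ (in the $\sqrt{2}E_8\sqrt{2}D_4$ case) or which of the six conjugacy classes of $\Sym_{12}$-subgroups (in the $\sqrt{2}D_{12}$ case) corresponds to it. To pin this down, I would compare the rank, shape, and action of $\mu_L^{-1}(\overline{O}(L) \cap \varphi^*(\overline{\Aut}(W)))$ with the Weyl group $W(V_1)$ it must contain; the rank and levels of the simple ideals, together with the diagram automorphism content, single out a unique pair in every line of both tables. Once this matching is carried out (essentially a bookkeeping check using the shape of $\overline{O}(L) \cap \varphi^*(\overline{\Aut}(W))$ as an extension of a $\Sym_{12}$-subgroup by $O_2(\overline{O}(L))$ and, in the second case, by the triality factor), the stated shapes of $\Out(V)$ follow immediately from Proposition \ref{P:OV}, completing the proof.
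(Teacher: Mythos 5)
Your proposal is correct and follows essentially the same route as the paper: the case split via Lemma \ref{L:2C}, the computation $K(V)\cong L^*/\tilde{Q}$ from Proposition \ref{L:KV2}, and the identification of $\Out(V)$ through Proposition \ref{P:OV} using the six $\Sym_{12}$-intersections from the ATLAS (resp.\ the three stabilizers of the $\mathcal{Q}_i$ together with the triality factor and the central $O_0(L)\cong\Z_2$) are exactly the steps carried out in Sections \ref{S451} and the $\sqrt2E_8\sqrt2D_4$ subsection. The matching of Lie algebras to conjugacy classes that you flag as the main bookkeeping obstacle is resolved in the paper in the same way, by comparing the shapes of $\overline{O}(L)\cap\varphi^*(\overline{\Aut}(W))$ with the Weyl groups they must contain.
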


\subsection{Conjugacy class $4C$ (Genus $E$)}
Assume that $g$ belongs to the conjugacy class $4C$ of $O(\Lambda)$.
Then $O(\irr(W),q_W)\cong 2^{22}.GO_7(2)$.
By Table \ref{Table:main}, $\Aut(W))(\cong\overline{\Aut}(W))$ has the shape $2^{21}.GO_7(2)$, which is an index $2$ subgroup of $O(\irr(W),q_W)$.
Note also that the Lie algebra structure of $\g=V_1$ is given as in Table \ref{T:Lie}.

Since the central charge of $W$ is $14$, the rank of $L$ is $10$.  
By \eqref{Eq:LQl} and Proposition \ref{P:Ug}, we have $Q_\g\subset L\subset \sqrt4P_\g^*$.
It follows from Table \ref{Table:main} and $\mathcal{D}(L)\cong\irr(W)$ that $\mathcal{D}(L)\cong \Z_2^2\times\Z_4^6$.
For each $\g$, its Lie algebra structure gives the lattices $Q_g$ and $\sqrt4P_\g^*$ as in Table \ref{4Ctable}.
Then one can easily see that there exists a unique even lattice $L$ up to isometry satisfying $\mathcal{D}(L)\cong \Z_2^2\times\Z_4^6$ and $Q_\g\subset L\subset \sqrt4P_\g^*$;
see Table \ref{4Ctable} for the explicit description.
We set $L_\g=L$.
The isometry groups of $L_\g$ are also summarized in  Table \ref{OK4C}.

\begin{remark}\label{R:Glue}
Let us explain the meaning of ``Glue'' in the tables.
Let $Q_\g=\bigoplus_{i=1}^s c_i R_i$, where $c_i\in\R$ and $R_i$ are irreducible root lattices.
In our cases, $L_\g$ is a sublattice of $\bigoplus_{i=1}^sc_i R_i^*$; we associate $L_\g/Q_\g$ to a subgroup of $\bigoplus_{i=1}^s (R_i^*/R_i)$ via the inclusion $L_\g/Q_\g\subset (\bigoplus_{i=1}^s c_i R_i^*)/(\bigoplus_{i=1}^s c_i R_i)\cong \bigoplus_{i=1}^s (R_i^*/R_i)$. 
In the tables, based on the isomorphisms
$A^*_m/A_m\cong \Z_{m+1}$, $D_{2m+1}^*/D_{2m+1}\cong\Z_4$, $D_{2m}^*/D_{2m}\cong\Z_2^2=\langle b,c\rangle$ and $E_6^*/E_6\cong\Z_3$, the generators of $L_\g/Q_\g$ is described as a subgroup of $\bigoplus_{i=1}^s (R_i^*/R_i)$ in ``Glue''.
Here $b,c$ are chosen so that they are permuted by the diagram automorphism of order $2$.
\end{remark}

By Proposition \ref{P:OV} and Lemma \ref{L:Out} (2), we have $\Out(V)\cong O(L_\g)/\langle W(V_1),-1\rangle$.
The group $K(V)$ is determined by Proposition \ref{L:KV2}.
These structures are summarized in Table \ref{OK4C}.

\begin{proposition} Assume that $g$ belongs to the conjugacy class $4C$.
Then the shapes of the groups $K(V)$ and $\Out(V)$ are given as in Table \ref{OK4C}.
\end{proposition}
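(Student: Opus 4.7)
The plan is to apply Proposition \ref{P:OV}, Lemma \ref{L:Out}(2), and Proposition \ref{L:KV2} to each of the five weight one Lie algebras in Table \ref{T:Lie} associated with the class $4C$. For each $\g$, the orbit lattice $L_\g$ is the unique even lattice of rank $10$ sandwiched between $Q_\g$ and $\sqrt{4}P_\g^*$ with $\mathcal{D}(L_\g) \cong \Z_2^2 \times \Z_4^6$, as displayed in Table \ref{4Ctable}. From this explicit description, the isometry group $O(L_\g)$ can be computed directly (by MAGMA), together with its image $\overline{O}(L_\g)$ in $O(\mathcal{D}(L_\g),q_{L_\g})$.

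Before invoking Lemma \ref{L:Out}(2), I must verify its hypothesis that $\overline{O}(L_\g)$ and $\varphi^{*}(\overline{\Aut}(W))$ generate $O(\mathcal{D}(L_\g),q_{L_\g})$. By Table \ref{Table:main}, $\varphi^{*}(\overline{\Aut}(W))$ is an index-$2$ subgroup of $O(\mathcal{D}(L_\g),q_{L_\g})$, so it suffices to exhibit a single element of $\overline{O}(L_\g)$ lying outside $\varphi^{*}(\overline{\Aut}(W))$. The $-1$-isometry of $L_\g$ always descends to the $-1$-isometry of $\mathcal{D}(L_\g)$, and by Lemma \ref{Lem:center}(2) this element is not contained in $\varphi^{*}(\overline{\Aut}(W))$. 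Hence the hypothesis holds for all five cases.

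With this in hand, Lemma \ref{L:Out}(2) yields $\Out(V) \cong O(L_\g)/\langle W(V_1), -1\rangle$. For each $\g$, the Weyl group $W(V_1)$ is read off from the simple ideal decomposition using Lemma \ref{Lem:W}; it is a normal subgroup of $O(L_\g)$ since every isometry permutes simple components of the root system $R(L_\g)$. Taking the quotient and identifying it with a familiar group gives the entry for $\Out(V)$. The computation of $K(V)$ is independent: by Proposition \ref{L:KV2}, $K(V) \cong L_\g^*/\tilde{Q}$ where $\tilde{Q} = \bigoplus_{i=1}^s (1/\sqrt{k_i})Q^i$, a straightforward finite index computation inside the ambient space of $L_\g$.

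The main obstacle will be the explicit identification of $O(L_\g)$ and the quotient $O(L_\g)/\langle W(V_1),-1\rangle$ for each of the five cases. Because the root systems $R(L_\g)$ contain both long and short irreducible components of mixed types (e.g.\ $A_{3,4}^3 A_{1,2}$ forces one to track a $2\sqrt{2}A_1$ glue component alongside three $\sqrt{4}A_3$'s), the ``graph'' of simple components permuted by $O(L_\g)$ must be computed carefully, and a MAGMA check of the order and structure of $O(L_\g)$ is the safest route. Once $|O(L_\g)|$, $|W(V_1)|$, and the action of $-1$ are known, the quotient is small enough to be pinned down by its order and its action on the simple summands, producing the entries of Table \ref{OK4C}.
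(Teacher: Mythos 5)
Your proposal is correct and follows essentially the same route as the paper: verify the generation hypothesis of Lemma \ref{L:Out} via the $-1$-isometry argument (which the paper carries out in the proof of Proposition \ref{P:uni2-8} and of Lemma \ref{L:Out} itself), conclude $\Out(V)\cong O(L_\g)/\langle W(V_1),-1\rangle$, and compute $K(V)\cong L_\g^*/\tilde{Q}$ from Proposition \ref{L:KV2}, with the explicit lattices and isometry groups as in Table \ref{4Ctable}. (Only a cosmetic slip: for $A_{3,4}^3A_{1,2}$ the short summand of $Q_\g$ is $\sqrt{2}A_1$, not $2\sqrt{2}A_1$.)
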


{\tiny
\begin{longtable}[c]{|c|c|c|c|c|c|} 
\caption{Even lattices of rank $10$ for the case $4C$.} \label{4Ctable}\\
\hline 
$\mathfrak{g}=V_1$&$Q_\g$&$\sqrt4P_\g^*$ &$L_\g/Q_\g$& Glue &$O(L_\g)$\\
\hline \hline
$A_{3,4}^3A_{1,2}$ & $(2A_3)^3\sqrt{2}A_1$ &$(2A_3^*)^3\sqrt2A_1^*$& $\Z_4^3$& $(100;1),(010;1)$& $O(A_3)\wr\Sym_3\times W(A_1)$\\ 
&&&&$(001;1)$&\\

$A_{7,4}A_{1,1}^3$ & $2A_7 A_1^3$ &$2A_7^*(A_1^*)^3$&$\Z_8$& $(1;100)$&  $O(A_7)\times W(A_1)\times W(A_1)\wr\Sym_2$\\  
$D_{5,4}C_{3,2}A_{1,1}^2$ & $2D_5 \sqrt{2}A_1^3A_1^2$ &$2D_5^*2A_3^*(A_1^*)^2$& $\Z_4\times\Z_2$ & $(1;111;00)$&  $O(D_5)\times O(A_3)\times W(A_1)\wr\Sym_2$ 
\\
&&&&$(0;111;11)$&\\

$E_{6,4}A_{2,1}B_{2,1}$  & $2E_6 A_2 A_{1}^2 $&$2E_6^*A_2^*A_1^2$ &$\Z_3$& $(1;1;00)$&  $(W(E_6)\times W(A_1)).2\times W(B_2)$ \\
$C_{7,2}A_{3,1}$ &$\sqrt2A_1^7A_3$&$2D_7^*A_{3}^*$ & $\Z_2$&$(1^7;2)$ &  $O(D_7)\times O(A_3)$  \\
\hline
\end{longtable}
}

{\small
\begin{longtable}[c]{|c|c|c|c|c|c|} 
\caption{$K(V)$ and $\Out(V)$ for the case $4C$} \label{OK4C}\\
\hline 
No.& $\g=V_1$ &  $W(V_1)$&$ \OOut(V_1)$& $\Out(V)$ &$K(V)$ \\ 
\hline \hline
$7$ &$A_{3,4}^3A_{1,2}$ & $ W(A_3)^3\times W(A_1)$& $\Z_2\wr\Sym_3$ &$\Z_2^2:\Sym_3$& $\Z_2$
 \\

$18$ &$A_{7,4}A_{1,1}^3$  & $W(A_7)\times W(A_1)^3$& $\Z_2\times\Sym_3$ & $\Z_2$  & $\Z_2^3$\\  
$19$ &$D_{5,4}C_{3,2}A_{1,1}^2$  &$ W(D_5)\times W(C_3)\times W(A_1)^2$&$ \Z_2\times\Sym_2$ &$\Z_2$& $\Z_2^3$\\
$28$ &$E_{6,4}A_{2,1}B_{2,1}$ &  $W(E_6)\times W(A_2)\times W(B_2)$& $\Z_2\times\Z_2$ &$1$& $\Z_6$\\
$35$ &$C_{7,2}A_{3,1}$  & $W(C_7)\times W(A_3)$&$\Z_2$ & $1$ & $\Z_2^2$
\\ \hline
\end{longtable}
}

\subsection{Conjugacy class $6E$ (Genus $G$)}
Assume that $g$ belongs to the conjugacy class $6E$ of $O(\Lambda)$.
Then $O(\irr(W),q_W)\cong GO_6^+(2)\times GO_6^+(3)$.
By Table \ref{Table:main}, $\Aut(W)(\cong\overline{\Aut}(W))$ is isomorphic to the full orthogonal group $O(\irr(W),q_W)$.
Note also that the Lie algebra structure of $\g=V_1$ is given as in Table \ref{T:Lie}.

Since the central charge of $W$ is $16$, the rank of $L$ is $8$.
By \eqref{Eq:LQl} and Proposition \ref{P:Ug}, we have $Q_\g\subset L\subset \sqrt6P_\g^*$.
It follows from Table \ref{Table:main} and $\mathcal{D}(L)\cong\irr(W)$ that $\mathcal{D}(L)\cong \Z_2^6\times\Z_3^6$.
For each $\g$, its Lie algebra structure gives the lattices $Q_\g$ and $\sqrt6P_\g^*$ as in Table \ref{table6to6}.
Then one can easily see that there exists a unique even lattice $L$ up to isometry satisfying $\mathcal{D}(L)\cong \Z_2^6\times\Z_3^6$ and $Q_\g\subset L\subset \sqrt6P_\g^*$;
see Table \ref{table6to6} for the explicit description.
Set $L_\g=L$.
The isometry groups of $L_\g$ are also summarized in  Table \ref{table6to6}.

The group $\Out(V)$ is determined by Lemma \ref{L:Out} (1),
 and the group $K(V)$ is determined by Proposition \ref{L:KV2}.
These structures are summarized in Table \ref{OK6E}.

\begin{proposition} Assume that $g$ belongs to the conjugacy class $6E$.
Then the shapes of the groups $K(V)$ and $\Out(V)$ are given as in Tables \ref{OK6E}.
\end{proposition}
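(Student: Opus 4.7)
The plan is to imitate the case-by-case analysis already carried out for the conjugacy classes $2A$ and $6E$ is the mixed-level analogue of those cases; concretely, for each of the two weight-one Lie algebras $\g \in \{A_{5,6}B_{2,3}A_{1,2},\, C_{5,3}G_{2,2}A_{1,1}\}$ from Table \ref{T:Lie}, the lattice $L_\g$ and its isometry group should already be recorded in Table \ref{table6to6}, so the task reduces to reading off $K(V)$ and $\Out(V)$ by applying the two general results Proposition \ref{L:KV2} and Lemma \ref{L:Out}(1).

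First I would verify that $L_\g$ is uniquely determined by $\g$ (up to isometry). By \eqref{Eq:LQl} and Proposition \ref{P:Ug}, the lattice $L_\g$ satisfies $Q_\g \subset L_\g \subset \sqrt{6}P_\g^*$, where $Q_\g = \bigoplus_i \sqrt{k_i}Q^i_{long}$ and $P_\g = \bigoplus_i (\sqrt{6}/\sqrt{k_i})Q^i$. Using the level and dual-Coxeter data, one computes $Q_\g$ and $\sqrt{6}P_\g^*$ explicitly for each $\g$; then one checks by inspection that inside the finite abelian group $\sqrt{6}P_\g^*/Q_\g$ there is a unique subgroup whose preimage gives an even lattice with discriminant group isomorphic, as a quadratic space, to $(\irr(W),-q_W) \cong 2^{+6}_{\rm I\!I} 3^{-6}$ from Table \ref{Table:main}. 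This pins down $L_\g$ and hence $O(L_\g)$, which I would compute as an extension of the stabilizer of the glue code $L_\g/Q_\g$ by the product of Weyl groups and outer diagram automorphisms of the irreducible components of $Q_\g$.

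Once $L_\g$ and $O(L_\g)$ are in hand, the group $K(V)$ is read off from Proposition \ref{L:KV2}: $K(V) \cong L_\g^*/\tilde{Q}$ with $\tilde{Q} = \bigoplus_i (1/\sqrt{k_i})Q^i$. For $\Out(V)$, the key simplification is that the conjugacy class $6E$ sits in the ``easy'' row of Table \ref{Table:main}, where $\overline{\Aut}(W) = O(\irr(W),q_W)$ and the index in Lemma \ref{L:Out} is $1$; hence Lemma \ref{L:Out}(1) gives directly $\Out(V) \cong O(L_\g)/W(V_1)$. Since $W(V_1)$ is the obvious direct product of the Weyl groups of the simple ideals and is a normal subgroup of $O(L_\g)$ by construction, this quotient is immediate once $O(L_\g)$ has been determined. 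The results would then be tabulated in Table \ref{OK6E}.

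The main obstacle, as always in these cases, is the explicit identification of $L_\g$ as the unique overlattice of $Q_\g$ with the prescribed discriminant form, and the subsequent computation of $O(L_\g)$; the mixed $2$-primary and $3$-primary parts of $\mathcal{D}(L_\g)$ make the glue code slightly more intricate than in the purely $p$-primary cases $2A$, $3B$, $5B$, $7B$. However, since $Q_\g$ has only two or three irreducible components and the glue code is small, this is a short finite check (done with MAGMA if needed), and no new ideas beyond those used in the preceding subsections are required.
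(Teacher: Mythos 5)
Your proposal is correct and follows essentially the same route as the paper: identify $L_\g$ as the unique even lattice with $Q_\g\subset L_\g\subset\sqrt{6}P_\g^*$ and $\mathcal{D}(L_\g)\cong\Z_2^6\times\Z_3^6$, compute $O(L_\g)$, and then read off $K(V)$ from Proposition \ref{L:KV2} and $\Out(V)\cong O(L_\g)/W(V_1)$ from Lemma \ref{L:Out}(1), the latter applying trivially since $\overline{\Aut}(W)$ is the full orthogonal group $O(\irr(W),q_W)$ in the $6E$ case. (Only a cosmetic slip: your opening sentence garbles which earlier conjugacy classes you are imitating, but the mathematics is the paper's argument.)
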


{\tiny
\begin{longtable}[c]{|c|c|c|c|c|c|} 
\caption{Even lattices of rank $8$ for the case $6E$} \label{table6to6}\\
\hline 
$\mathfrak{g}=V_1$  & $Q_\g$ & $\sqrt{6}P_\g^*$ & $L_\g/Q_\g$ &Glue &$O(L_\g)$\\ 
\hline \hline
$A_{5,6}B_{2,3}A_{1,2}$  & $\sqrt{6}A_5 \sqrt{3} A_{1}^2 \sqrt{2}A_1$ &$\sqrt{6}A_5^*(\sqrt3A_1^*)^2\sqrt2A_1^*$& $\Z_6\times\Z_2$ & $(1;00;1)$ & $O(A_5)\times O(A_1^2)\times W(A_1)$\\
&&&&$(0;11;1)$&\\ 
$C_{5,3}G_{2,2}A_{1,1}$ & $\sqrt{3}A_1^5 \sqrt{2}A_2 A_1$ &$\sqrt6D_5^*\sqrt2A_2^*A_1^*$&  $\Z_2$&$(1^5;0;1)$  & $O(A_5)\times O(A_1^2)\times O(A_1)$
\\ 
\hline 
\end{longtable}
}

{\small
\begin{longtable}[c]{|c|c|c|c|c|c|} 
\caption{$K(V)$ and $\Out(V)$ for the case $6E$} \label{OK6E}\\
\hline 
No.& $\g=V_1$& $W(V_1)$&$  \OOut(V_1)$& $\Out(V)$ &$K(V)$ \\ 
\hline \hline
$8$ & $A_{5,6}B_{2,3}A_{1,2}$  & $W(A_5)\times W(B_2)\times W(A_1)$ 
&$\Z_2$
&$\Z_2$& $\Z_2$\\
$21$ & $C_{5,3}G_{2,2}A_{1,1}$ &  $W(C_5)\times W(G_2)\times W(A_1)$&
$1$& $1$ & $\Z_2$\\ \hline
\end{longtable}
}

\subsection{Conjugacy class $6G$ (Genus $J$)}
Assume that $g$ belongs to the conjugacy class $6G$ of $O(\Lambda)$.
Then $O(\irr(W),q_W)\cong 2^{1+8}_+{:}(GO_4^+(2)\times\Sym_3)\times GO_5(3)$.
Here $GO_4^+(2)\cong\Sym_3\wr\Z_2$ and $GO_5(3)\cong 2\times \PO_5(3).2$.
By Table \ref{Table:main}, $\Aut(W)(\cong\overline{\Aut}(W))$ has the shape $2^{1+8}_+{:}(\Sym_3\times\Sym_3\times\Sym_3)\times \PO_5(3).2$, which is 
an index $4$ subgroup of $O(\irr(W),q_W)$.

\begin{remark} Let $O_2(\irr(W))$ be the Sylow $2$-subgroup of $\irr(W)$ of shape $2^4.4^2$.
Then $O(O_2(\irr(W)),q_W)\cap \Aut(W)\cong 2^{1+8}_+{:}(3\times\Sym_3\times\Sym_3)$, which is computed by MAGMA.
Hence we can rewrite $\Aut(W)\cong (2^{1+8}_+{:}(3\times\Sym_3\times\Sym_3)\times \PO_5(3).2).2$ with respect to the shape of $O(\irr(W),q_W)$.
In fact, $\Aut(W)$ is not normal in $O(\irr(W),q_W)$ by MAGMA.
\end{remark}

By Table \ref{T:Lie}, the Lie algebra structure of $\g=V_1$ is $F_{4,6}A_{2,2}$ or $D_{4,12}A_{2,6}$ .
By \eqref{Eq:LQl} and Proposition \ref{P:Ug}, we have $Q_\g\subset L\subset \sqrt{12}P_\g^*$.
It follows from Table \ref{Table:main} and $\mathcal{D}(L)\cong\irr(W)$ that $\mathcal{D}(L)\cong \Z_2^4\times\Z_4^2\times\Z_3^5$.
In both cases, we have $L\cong \sqrt{6}D_4\sqrt{2}A_2\cong\sqrt{12}D_4^*\sqrt6A_2^*$ and $O(L)\cong O(D_4)\times O(A_2)$
(see Table \ref{tablelevel12}).
Note that $\overline{O}(L)\cong O(L)$.

Let $i:(\mathcal{D}(L),q_L)\to(\irr(W),-q_W)$ be an isometry.
By the possible Lie algebra structures of $\g$, there exist at least two non-isomorphic holomorphic VOAs obtained as inequivalent extensions of $V_L\otimes W$.
Since $\overline{\Aut}(W)$ is an index $4$ subgroup of $O(\irr(W),q_W)$, we have $$2\le |i^*(\overline{\Aut}(W))\setminus O(\mathcal{D}(L),q_L)/\overline{O}(L)|\le4$$ by Proposition \ref{P:Hoext}.
By Lemma \ref{Lem:center} (2), the $-1$-isometry is not in $i^*(\overline{\Aut}(W))$.
Clearly it is in $\overline{O}(L)$.
Hence, 
$$|i^*(\overline{\Aut}(W))\setminus O(\mathcal{D}(L),q_L)/\overline{O}(L)|=2.$$
By Proposition \ref{P:Hoext}, we obtain the following:
\begin{proposition}\label{P:uni6G} Assume that the conjugacy class of $g$ is $6G$.
Then there exist exactly two holomorphic VOAs of central charge $24$ obtained as inequivalent simple current extensions of $V_L\otimes W$, up to isomorphism.
\end{proposition}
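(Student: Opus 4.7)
The strategy is to apply Proposition \ref{P:Hoext} and count the double cosets $i^*(\overline{\Aut}(W))\setminus O(\mathcal{D}(L),q_L)/\overline{O}(L)$, where $i$ is a fixed isometry from $(\mathcal{D}(L),q_L)$ to $(\irr(W),-q_W)$. Since $\overline{\Aut}(W)$ has index $4$ in $O(\irr(W),q_W)$ by Table \ref{Table:main}, the left coset space $i^*(\overline{\Aut}(W))\setminus O(\mathcal{D}(L),q_L)$ has exactly $4$ elements, giving an immediate upper bound of $4$ on the number of double cosets.

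For a lower bound of $2$, I would invoke Table \ref{T:Lie}: both $F_{4,6}A_{2,2}$ and $D_{4,12}A_{2,6}$ appear in Schellekens' list for the conjugacy class $6G$, and each of their orbit lattices is isometric to $L\cong\sqrt{6}D_4\sqrt{2}A_2$ (as described just before the statement). Since the weight one Lie algebra structure is a VOA isomorphism invariant, these two Lie algebras realize two non-isomorphic VOAs, and hence give at least two inequivalent simple current extensions of $V_L\otimes W$. By Proposition \ref{P:Hoext}, this forces the number of double cosets to be at least $2$.

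The heart of the argument is to sharpen the upper bound from $4$ to $2$ using the $-1$-isometry. Since $-\mathrm{id}\in O(L)$, its image in $\overline{O}(L)$ is the $-1$-isometry of $\mathcal{D}(L)$, while by Lemma \ref{Lem:center}(2) this $-1$-isometry does \emph{not} belong to $i^*(\overline{\Aut}(W))$. Because $-1$ is central in $O(\mathcal{D}(L),q_L)$, right multiplication by $-1$ permutes the four left cosets of $i^*(\overline{\Aut}(W))$, and this action is free: a coset $i^*(\overline{\Aut}(W))g$ could be fixed by $-1$ only if $-1\in i^*(\overline{\Aut}(W))$. Thus $\langle -1\rangle\subset\overline{O}(L)$ already partitions the four cosets into two orbits of size $2$, so the orbits of the larger group $\overline{O}(L)$ refine this into at most $2$ orbits.

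Combining both bounds forces the number of double cosets to be exactly $2$, and Proposition \ref{P:Hoext} then yields exactly $2$ inequivalent simple current extensions of $V_L\otimes W$, up to isomorphism. I do not anticipate any substantial obstacle: every ingredient (the index-$4$ from Table \ref{Table:main}, the two Lie algebras from Table \ref{T:Lie}, the non-containment of $-1$ in $i^*(\overline{\Aut}(W))$ from Lemma \ref{Lem:center}(2), and the prior determination of $L$) is already available, and the proof reduces to a clean squeeze between the lower bound $2$ coming from the two weight one Lie algebras and the upper bound $2$ coming from the central $-1$-isometry.
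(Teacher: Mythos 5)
Your proposal is correct and follows essentially the same route as the paper: the upper bound $4$ from the index of $\overline{\Aut}(W)$ in $O(\irr(W),q_W)$, the lower bound $2$ from the two weight one Lie algebras $F_{4,6}A_{2,2}$ and $D_{4,12}A_{2,6}$ sharing the orbit lattice $\sqrt{6}D_4\sqrt{2}A_2$, and the refinement to $2$ via the central $-1$-isometry lying in $\overline{O}(L)$ but not in $i^*(\overline{\Aut}(W))$ by Lemma \ref{Lem:center}(2). Your explicit free-action argument on the four cosets is just a slightly more detailed spelling-out of the paper's one-line conclusion.
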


By the argument above, $i^*(\overline{\Aut}(W))$ is an index $2$ subgroup of the group generated by $i^*(\overline{\Aut}(W))$ and $\overline{O}(L)$.
Thus $\overline{O}(L)\cap i^*(\overline{\Aut}(W))$ is an index $2$ subgroup of $\overline{O}(L)$, and by Lemma \ref{Lem:center} (2), $\overline{O}(L)\cap i^*(\overline{\Aut}(W))\cong \overline{O}(L)/\langle-1\rangle$.
By Proposition \ref{P:OV} and $\overline{O}(L)\cong O(L)$, we have
$\Out(V)\cong O(L)/\langle W(V_1),-1\rangle$.
The group $K(V)$ is determined by Proposition \ref{L:KV2}.
These group structures are summarized in Table \ref{OK6G}.

\begin{proposition} Assume that $g$ belongs to the conjugacy class $6G$.
Then 
the shapes of the groups $K(V)$ and $\Out(V)$ are given as in Table \ref{OK6G}.
\end{proposition}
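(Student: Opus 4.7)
The plan is to combine Proposition \ref{L:KV2} (giving $K(V)\cong L^{*}/\tilde Q$) with the formula $\Out(V)\cong O(L)/\langle W(V_1),-1\rangle$ established just above the statement. Both computations split across the two Lie algebra structures $\g=F_{4,6}A_{2,2}$ and $\g=D_{4,12}A_{2,6}$; the common data are $L\cong\sqrt{6}D_4\sqrt{2}A_2\cong\sqrt{12}D_4^{*}\sqrt{6}A_2^{*}$ and $O(L)\cong O(D_4)\times O(A_2)$. The concrete realization of $L$ inside $\mathfrak{h}$ differs in the two cases: for $\g=F_{4,6}A_{2,2}$ the long roots of $F_4$ and $A_2$ generate $\sqrt{6}D_4\oplus\sqrt{2}A_2$, and a discriminant count shows that this already equals $L$; for $\g=D_{4,12}A_{2,6}$ the long roots generate only $\sqrt{12}D_4\oplus\sqrt{6}A_2$, and the even overlattice forced by the discriminant constraint is $\sqrt{12}D_4^{*}\oplus\sqrt{6}A_2^{*}$.

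For $K(V)$ in the case $\g=F_{4,6}A_{2,2}$, I compute $L^{*}=(1/\sqrt{6})D_4^{*}\oplus(1/\sqrt{2})A_2^{*}$ and $\tilde Q=(1/\sqrt{6})F_4\oplus(1/\sqrt{2})A_2$; the key step is the classical identification $F_4=D_4^{*}$ in the normalization where the long roots of $F_4$ have squared norm $2$, which makes the $D_4$-summand cancel out and leaves $K(V)\cong A_2^{*}/A_2\cong\Z_3$. For $\g=D_{4,12}A_{2,6}$, direct computation gives $L^{*}=(1/\sqrt{12})D_4\oplus(1/\sqrt{6})A_2$, which coincides with $\tilde Q=(1/\sqrt{12})D_4\oplus(1/\sqrt{6})A_2$ on the nose, so $K(V)=1$.

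For $\Out(V)$, I will identify $W(V_1)$ inside $O(L)\cong O(D_4)\times O(A_2)$ and take the quotient by $\langle W(V_1),-1\rangle$. In the case $\g=F_{4,6}A_{2,2}$, Lemma \ref{Lem:W} gives $W(F_4)\cong O(D_4)$; combined with the fact that the $-1$ isometry of $L$ projects to an outer element of $O(A_2)\setminus W(A_2)$, this forces $\langle W(V_1),-1\rangle=O(D_4)\times O(A_2)=O(L)$, hence $\Out(V)=1$. In the case $\g=D_{4,12}A_{2,6}$, we have $W(V_1)=W(D_4)\times W(A_2)$ with $-1\in W(D_4)$ (since $D_4$ has even rank); multiplying $(-1,1)\in W(D_4)\times1$ by the central $(-1,-1)$ produces $(1,-1)$, so $\langle W(V_1),-1\rangle=W(D_4)\times O(A_2)$ and $\Out(V)\cong O(D_4)/W(D_4)\cong\Sym_3$. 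The only delicate point is keeping track of the two inequivalent realizations of $L$ together with the classical identity $F_4=D_4^{*}$; everything else is immediate from the setup.
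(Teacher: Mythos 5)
Your proposal is correct and follows essentially the same route as the paper: the paper likewise obtains $\Out(V)\cong O(L)/\langle W(V_1),-1\rangle$ from Proposition \ref{P:OV} together with the index computation for $\overline{O}(L)\cap i^*(\overline{\Aut}(W))$, and obtains $K(V)$ from Proposition \ref{L:KV2}, leaving the explicit lattice and Weyl-group computations to Tables \ref{tablelevel12} and \ref{OK6G}. Your spelled-out details (the identification $Q^{F_4}=D_4^*$, the cancellation giving $K(V)\cong\Z_3$ resp. $1$, and the analysis of $\langle W(V_1),-1\rangle$ inside $O(D_4)\times O(A_2)$ using $-1\in W(D_4)$) all check out against the paper's data.
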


{\small
\begin{longtable}[c]{|c|c|c|c|c|c|} 
\caption{Even lattice of rank $6$ for the case $6G$} \label{tablelevel12}\\
\hline 
$\g=V_1$  & $Q_\g$&$\sqrt{12}P_\g^*$   & $L/Q_\g$&Glue&$O(L)$\\ 
\hline \hline
$D_{4,12}A_{2,6}$ & $\sqrt{12}D_4 \sqrt{6}A_2$&$\sqrt{12}D_4^*\sqrt6A_2^*$&$\Z_2^2\times\Z_3$ & $(b;0),(c;0),(0;1)$&$O(D_4)\times O(A_2)$\\
$F_{4,6}A_{2,2}$  & $\sqrt{6}D_4 \sqrt{2}A_2$ &$\sqrt{12}D_4^*\sqrt2A_2^*$ & $1$&$1$&$O(D_4)\times O(A_2)$ \\

 \hline 
\end{longtable}
}

{\small \begin{longtable}[c]{|c|c|c|c|c|c|} 
\caption{$K(V)$ and $\Out(V)$ for the case $6G$} \label{OK6G}\\
\hline 
No.& $\g=V_1$ & $W(V_1)$&$  \OOut(V_1)$& $\Out(V)$ &$K(V)$ \\ 
\hline \hline
$3$ &$D_{4,12}A_{2,6}$   &$W(D_4)\times W(A_2)$ &$\Sym_3\times\Z_2$& $\Sym_3$ & $1$\\
$14$ &$F_{4,6}A_{2,2}$   &$W(F_4)\times W(A_2)$&$\Z_2$ & $1$ & $\Z_3$  \\

 \hline 
\end{longtable}
}

\subsection{Conjugacy class $8E$ (Genus $I$)}
Assume that $g$ belongs to the conjugacy class $8E$ of $O(\Lambda)$.
Then $O(\irr(W),q_W)\cong 2^{12+9}.\Sym_6$.
By Table \ref{Table:main}, $\Aut(W)(\cong\overline{\Aut}(W))$ has the shape $2^{11+9}.\Sym_6$, which is
an index $2$ subgroup of $O(\irr(W),q_W)$.
Note also that the Lie algebra structure of $\g=V_1$ is $D_{5,8}A_{1,2}$ by Table \ref{T:Lie}.

Since the central charge of $W$ is $18$, the rank of $L$ is $6$. 
By \eqref{Eq:LQl} and Proposition \ref{P:Ug}, we have $Q_\g\subset L\subset \sqrt8P_\g^*$.
It follows from Table \ref{Table:main} and $\mathcal{D}(L)\cong\irr(W)$ that $\mathcal{D}(L)\cong \Z_2\times\Z_4\times\Z_8^4$.
Hence, we have $L_\g=L\cong\sqrt{8}D_5^*\sqrt2A_1$ and $O(L_\g)\cong O(D_5)\times W(A_1)$ (see Table \ref{tablelevel8}).

By Proposition \ref{P:OV} and Lemma \ref{L:Out} (2), we have $\Out(V)\cong O(L_\g)/\langle W(V_1),-1\rangle$.
The group $K(V)$ is determined by Proposition \ref{L:KV2}. 
See Table \ref{OK8E} for the structures.

\begin{proposition} Assume that $g$ belongs to the conjugacy class $8E$.
Then 
the shapes of the groups $K(V)$ and $\Out(V)$ are given as in Table \ref{OK8E}.
\end{proposition}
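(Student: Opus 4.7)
The plan is to follow the uniform scheme set up in Sections~\ref{S:4}--\ref{sec:4}. The first task is to pin down the orbit lattice $L_\g$ for $\g=D_{5,8}A_{1,2}$. From \eqref{Eq:LQl} and Proposition~\ref{P:Ug} we have $Q_\g=\sqrt 8 D_5 \oplus \sqrt 2 A_1\subset L_\g \subset \sqrt 8 P_\g^* = \sqrt8 D_5^*\oplus \sqrt2 A_1^*$, while Table~\ref{Table:main} forces $\mathcal{D}(L_\g)\cong\irr(W)\cong \Z_2\times\Z_4\times\Z_8^4$. Comparing discriminants of the candidate overlattices of $Q_\g$ within $\sqrt8 P_\g^*$, only $L_\g\cong \sqrt 8 D_5^*\sqrt 2 A_1$ has the right discriminant form, which also shows uniqueness up to isometry. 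Consequently $O(L_\g)\cong O(D_5)\times W(A_1)=(\langle -1\rangle\times W(D_5))\times W(A_1)$, and $\mu_{L_\g}$ is injective so $\overline{O}(L_\g)\cong O(L_\g)$.

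Next I would compute $\Out(V)$ via Lemma~\ref{L:Out}(2). Its hypothesis is that $\overline{O}(L_\g)$ together with $\varphi^*(\overline{\Aut}(W))$ generate the full group $O(\mathcal{D}(L_\g),q_{L_\g})$. By Table~\ref{Table:main}, $\varphi^*(\overline{\Aut}(W))$ has index $2$ in $O(\mathcal{D}(L_\g),q_{L_\g})$, and by Lemma~\ref{Lem:center}(2) it does not contain the $-1$-isometry of the discriminant group; since that isometry does lie in $\overline{O}(L_\g)$, the generation is automatic. Hence $\Out(V)\cong O(L_\g)/\langle W(V_1),-1\rangle$.

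To finish the $\Out$-computation, observe that $W(V_1)=W(D_5)\times W(A_1)$ and that $-1_{A_1}\in W(A_1)$. Thus the generator $-1=(-1_{D_5},-1_{A_1})$ of $O(L_\g)$ modulo $W(V_1)$ may be multiplied by $(1,-1_{A_1})\in W(V_1)$ to yield $(-1_{D_5},1)$, which together with $W(V_1)$ exhausts $O(D_5)\times W(A_1)=O(L_\g)$. Hence $\Out(V)=1$, matching Table~\ref{OK8E}.

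Finally, I would apply Proposition~\ref{L:KV2} to obtain $K(V)\cong L_\g^*/\tilde Q$ with $\tilde Q=(1/\sqrt 8)D_5\oplus (1/\sqrt 2)A_1$ and $L_\g^*\cong (1/\sqrt 8)D_5\oplus (1/\sqrt 2)A_1^*$; the first factors cancel, leaving $K(V)\cong A_1^*/A_1\cong \Z_2$. The one step that requires actual care is the uniqueness of $L_\g$ as an even overlattice of $Q_\g$ with the prescribed discriminant form; everything else is a clean application of the general results of Section~\ref{S:4} together with the tabulated data in Tables~\ref{Table:main} and~\ref{tablelevel8}.
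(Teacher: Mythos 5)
Your proof is correct and follows the same route as the paper: pin down $L_\g\cong\sqrt8D_5^*\sqrt2A_1$ from $Q_\g\subset L\subset\sqrt8P_\g^*$ and the discriminant form, then apply Lemma \ref{L:Out}(2) for $\Out(V)$ and Proposition \ref{L:KV2} for $K(V)$. You additionally spell out two steps the paper leaves implicit — the verification (via the index-$2$ and $-1$-isometry argument of Lemma \ref{Lem:center}(2)) that $\overline{O}(L_\g)$ and $\varphi^*(\overline{\Aut}(W))$ generate $O(\mathcal{D}(L_\g),q_{L_\g})$, and the explicit computation of the quotient $O(L_\g)/\langle W(V_1),-1\rangle=1$ — which is welcome but not a different method.
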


{\small
\begin{longtable}[c]{|c|c|c|c|c|c|} 
\caption{Even lattice of rank $6$ for the case $8E$} \label{tablelevel8}\\
\hline 
$V_1=\mathfrak{g}$  & $Q_\g$ &$\sqrt8P_\g^*$& $L_\g/Q_\g$ &Glue&$O(L_\g)$ \\ 
\hline \hline
$D_{5,8}A_{1,2}$  & $\sqrt{8}D_5 \sqrt{2}A_1$&$\sqrt8D_5^*\sqrt2A_1^*$&$\Z_4$& $(1;0)$& $O(D_5)\times W(A_1)$  \\
 \hline 
\end{longtable}
}

{\small
\begin{longtable}[c]{|c|c|c|c|c|c|} 
\caption{$K(V)$ and $\Out(V)$ for the case $8E$} \label{OK8E}\\
\hline 
No.& $\g=V_1$ & $W(V_1)$& $  \OOut(V_1)$&$\Out(V)$ &$K(V)$ \\ 
\hline \hline
$10$ & $D_{5,8}A_{1,2}$   &  $W(A_5)\times W(A_1)$ &$\Z_2$& $1$ &  $\Z_2$\\
 \hline 
\end{longtable}
}

\subsection{Conjugacy class $10F$ (Genus $K$)}
Assume that $g$ belongs to the conjugacy class $10F$ of $O(\Lambda)$.
Then $O(\irr(W),q_W)\cong 2^{1+4}_+{:}(\Sym_3\times\Sym_3)\times GO_4^+(5)$.
By Table \ref{Table:main}, 
$\Aut(W)(\cong\overline{\Aut}(W))$ has the shape $2^{1+4}_+{:}(2\times\Sym_3)\times GO_4^+(5)$, which  
is an index $3$ subgroup of $O(\irr(W),q_W)$.
By Table \ref{T:Lie}, the Lie algebra structure of $\g=V_1$ is $C_{4,10}$.

Since the central charge of $W$ is $20$, the rank of $L$ is $4$.
By \eqref{Eq:LQl} and Proposition \ref{P:Ug}, we have $Q_\g\subset L\subset \sqrt{20}P_\g^*$.
It follows from Table \ref{Table:main} and $\mathcal{D}(L)\cong\irr(W)$ that $\mathcal{D}(L)\cong \Z_2^2\times\Z_4^2\times\Z_5^4$.
Then we have $L_\g=L\cong\sqrt{10}D_4$ and $O(L_\g)\cong O(D_4)$
(see Table \ref{tablelevel20}).

\begin{remark}\label{R:10F}
For $U=\sqrt{20} L^*$, we have $\mathcal{D}(U)= \Z_2^2$ and ${\rm rank}(U)=4$. It is easy to show that $U \cong D_4$ and thus $L=\sqrt{20}U^* \cong \sqrt{20}D_4^*\cong \sqrt{10} D_4$. 
\end{remark}

Since $V_1\cong C_{4,10}$, we have $  \OOut(V_1)=1$, and $\Out(V)=1$.
The group $K(V)$ is trivial by Proposition \ref{L:KV2}.
These group structures are summarized in Table \ref{tablelevel10}.

\begin{proposition} Assume that $g$ belongs to the conjugacy class $10F$.
Then 
the shapes of the groups $K(V)$ and $\Out(V)$ are given as in Table \ref{tablelevel10}.
\end{proposition}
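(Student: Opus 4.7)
The plan is to establish both statements directly, using that by Table \ref{T:Lie} the weight one Lie algebra $V_1$ is forced to be $C_{4,10}$ in the conjugacy class $10F$. First I would handle $\Out(V)$: the embedding $\Out(V)\hookrightarrow \OOut(V_1)$ from the introduction reduces the problem to computing $\OOut(C_4)$, which is trivial since the Dynkin diagram of $C_4$ has no non-trivial automorphism. Hence $\Out(V)=1$.

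For $K(V)$ I would apply Proposition \ref{L:KV2} and aim to show $\tilde{Q}=L^*$. A short check shows that the normalized root lattice $Q^1$ of $C_4$ (long roots of norm $2$) is isometric to $(1/\sqrt{2})D_4$, so $\tilde{Q}=(1/\sqrt{10})Q^1$ is isometric to $(1/\sqrt{20})D_4$, with determinant $4/20^4=1/40000$. From Remark \ref{R:10F}, $L\cong\sqrt{10}D_4$, hence $L^*\cong (1/\sqrt{10})D_4^*$; using $D_4^*\cong (1/\sqrt{2})D_4$ one finds that $L^*$ is also isometric to $(1/\sqrt{20})D_4$, of the same determinant. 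Since Proposition \ref{L:KV2} implicitly provides $\tilde{Q}\subset L^*$ and the two lattices agree in rank and discriminant, the inclusion is an equality, so $K(V)$ is trivial.

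There is no real obstacle in this case: both groups turn out to be trivial, and the argument is a straightforward combination of the general embedding into the outer Lie-algebra automorphism group with a brief discriminant comparison. The resulting values $\Out(V)=1$ and $K(V)=1$ are exactly the entries recorded in Table \ref{tablelevel10}.
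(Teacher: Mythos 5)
Your proposal is correct and follows essentially the same route as the paper: the paper also deduces $\Out(V)=1$ from $\OOut(C_{4,10})=1$ (via the embedding $\Out(V)\subset\OOut(V_1)$ guaranteed by Proposition \ref{L:idh}) and gets $K(V)=1$ from Proposition \ref{L:KV2}. Your discriminant comparison showing $\tilde{Q}=L^*$ just makes explicit the computation the paper leaves implicit (equivalently, $U=P_\g\cong D_4$ in Table \ref{tablelevel20}).
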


{\small
	\begin{longtable}[c]{|c|c|c|c|c|c|} 
		\caption{Even lattice of rank $4$ for the case $10F$} \label{tablelevel20}\\
		\hline 
		$\g=V_1$  & $Q_\g$&$\sqrt{20}P_\g^*$ & $L_\g/Q_\g$&Glue&$O(L_\g)$\\ 
		\hline \hline
		$C_{4,10}$  & $\sqrt{10}A_1^4$  &$\sqrt{20}D_4^*$ & $\Z_2$&$(1111)$& $O(D_4)$ \\
		\hline 
	\end{longtable}
}

{\small
	\begin{longtable}[c]{|c|c|c|c|c|c|} 
		\caption{$K(V)$ and $\Out(V)$ for the case $10F$} \label{tablelevel10}\\
		\hline 
		No.& $V_1$ & $W(V_1)$ &$  \OOut(V_1)$ &  $\Out(V)$ &$K(V)$\\ 
		\hline \hline
		$4$ &$C_{4,10}$  & $W(C_4)$ & $1$ & $1$ &$1$\\
		\hline 
	\end{longtable}
}

It is easy to see that $\mu_L$ is injective, that is, $\overline{O}(L)\cong O(L)$.
Let $\varphi$ be an isometry from $(\mathcal{D}(L),q_L)$ to $(\irr(W),-q_W)$.
By Proposition \ref{P:OV} and $\Out(V)=1$, we have $\overline{O}(L)\cap\varphi^*(\overline{\Aut}(W))=W(V_1)$.
Since $W(V_1)(\cong W(C_4))$ is an index $3$ subgroup of $\overline{O}(L)$ (see Lemma \ref{Lem:W}), so is $\overline{O}(L)\cap\varphi^*(\overline{\Aut}(W))$.
Hence $\varphi^*(\overline{\Aut}(W))$ and $\overline{O}(L)$ generate $O(\mathcal{D}(L),q_L)$.
By Proposition \ref{P:Hoext}, we obtain the following:
\begin{proposition}\label{P:uni10F} Assume that the conjugacy class of $g$ is $10F$.
Then, there exists exactly one holomorphic VOA of central charge $24$ obtained as inequivalent simple current extensions of $V_L\otimes W$, up to isomorphism.
\end{proposition}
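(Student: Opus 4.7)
The plan is to apply Proposition \ref{P:Hoext}, so that the number of inequivalent simple current extensions of $V_L \otimes W$ producing a holomorphic VOA of central charge $24$ equals the number of double cosets $|\varphi^*(\overline{\Aut}(W))\backslash O(\mathcal{D}(L),q_L)/\overline{O}(L)|$ for a fixed isometry $\varphi:(\mathcal{D}(L),q_L)\to(\irr(W),-q_W)$. Since $L\cong\sqrt{10}D_4$ is already uniquely determined (by Remark \ref{R:10F}), everything reduces to a group-theoretic count inside $O(\mathcal{D}(L),q_L)$.

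The first step is to identify the relevant subgroups and their indices. I would observe that $\mu_L$ is injective on $O(L)\cong O(D_4)$, since the only candidate for an element of $O_0(L)$ is $-\id$, and one checks directly that $-\id$ acts non-trivially on $\mathcal{D}(L)\cong \Z_2^2\times\Z_4^2\times\Z_5^4$; consequently $\overline{O}(L)\cong O(L)\cong O(D_4)$. From Table \ref{Table:main} (the $10F$ row), $\varphi^*(\overline{\Aut}(W))$ has index $3$ in $O(\mathcal{D}(L),q_L)$.

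Next, I would use Proposition \ref{P:OV} applied to the unique Lie algebra structure $V_1\cong C_{4,10}$: because $\OOut(C_{4,10})=1$ gives $\Out(V)=1$, we get $\overline{O}(L)\cap \varphi^*(\overline{\Aut}(W)) = W(V_1)\cong W(C_4)$. By Lemma \ref{Lem:W}, $W(C_4)$ has index $3$ in $O(D_4)\cong\overline{O}(L)$, so the intersection has index $3$ in $\overline{O}(L)$.

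With both indices equal to $3$, a straightforward order count yields
\[
|\varphi^*(\overline{\Aut}(W))\cdot\overline{O}(L)| = \frac{|\varphi^*(\overline{\Aut}(W))|\cdot|\overline{O}(L)|}{|\varphi^*(\overline{\Aut}(W))\cap\overline{O}(L)|} = \frac{(|O(\mathcal{D}(L),q_L)|/3)\cdot|\overline{O}(L)|}{|\overline{O}(L)|/3} = |O(\mathcal{D}(L),q_L)|,
\]
so the two subgroups together exhaust $O(\mathcal{D}(L),q_L)$. Therefore the double coset space collapses to a single class, and Proposition \ref{P:Hoext} gives uniqueness. There is no serious obstacle here: the real work has been done in establishing $\Out(V)=1$, the uniqueness of $L$, and the index calculation in Table \ref{Table:main}; the remaining argument is a short counting step.
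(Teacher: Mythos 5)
Your proof is correct and follows essentially the same route as the paper: fix $\varphi$, use $\Out(V)=1$ together with Proposition \ref{P:OV} and Lemma \ref{Lem:W} to get $[\overline{O}(L):\overline{O}(L)\cap\varphi^*(\overline{\Aut}(W))]=3$, combine this with the index-$3$ statement from Table \ref{Table:main}, and conclude via Proposition \ref{P:Hoext}. Your explicit order count showing $\varphi^*(\overline{\Aut}(W))\cdot\overline{O}(L)=O(\mathcal{D}(L),q_L)$ is in fact slightly more careful than the paper's wording (which only says the two subgroups \emph{generate} the full orthogonal group), and the set-theoretic product is exactly what is needed for the double-coset space to collapse to a single class.
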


As a consequence of our calculations, we have proved Theorem \ref{T:main13} and confirmed \cite[Conjecture 4.8]{Ho2}. 
Combining with the characterization of Niemeier lattice VOAs in \cite{DM04b}, it provides another proof for the uniqueness of holomorphic vertex operator algebras of central charge $24$ with non-trivial weight one Lie algebras.

\appendix\section{Actions of automorphism groups on the weight one spaces}

In this appendix, for holomorphic VOAs $V$ of central charge $24$ whose weight one Lie algebras are semisimple, we describe the subgroup $\Out_1(V)$ of $\Out(V)$ which preserves every simple ideal of $V_1$ and the quotient group $\Out_2(V)=\Out(V)/\Out_1(V)$. 

\subsection{Simple current modules over $L_{\hat{\g}}(k,0)$}

Let $\g$ be a simple Lie algebra and let $k$ be a positive integer.
Let $L_{\hat{\g}}(k,0)$ be the simple affine VOA associated with $\g$ at level $k$.
Let $S_\g$ be the set of isomorphism classes of simple current $L_{\hat{\g}}(k,0)$-modules.
Then $S_\g$ has an abelian group structure under the fusion product.
The structures of $S_\g$ are well-known (see  \cite[Remark 2.21]{Li01} and reference therein), which are summarized in Table \ref{T:scaffine}.
Here $\Gamma(\g)$ is the diagram automorphism group of $\g$ and $[\Lambda]$ is the irreducible $L_{\hat{\g}}(k,0)$-module $L_{\hat{\g}}(k,\Lambda)$.
Note that the notations $[i](=i[1])$, $[s]$ and $[c]$ are used in \cite{Sc93}.

\begin{longtable}[c]{|c|c|c|c|c|}
\caption{Simple current $L_{\hat{\g}}(k,0)$-modules} \label{T:scaffine}\\
\hline 
Type & level& $S_\g$&$\Gamma(\g)$& generators of $S_\g$\\ \hline 
\hline 
$A_1$ &$k$& $\Z_{2}$&$1$& $[1]=[k\Lambda_1]$\\
$A_n$ $(n\ge2)$&$k$& $\Z_{n+1}$&$\Z_2$& $[1]=[k\Lambda_1]$\\
$B_n$ ($n\ge2$)&$k$&$\Z_2$&$1$&$[1]=[k\Lambda_1]$\\
$C_n$ ($n\ge2)$&$k$&$\Z_2$&$1$&$[1]=[k\Lambda_n]$\\
$D_{4}$ &$k$&$\Z_2\times\Z_2$&$\Sym_3$&$[s]=[k\Lambda_{n-1}],[c]=[k\Lambda_{n}]$\\
$D_{2n}$ $(n\ge3)$&$k$&$\Z_2\times\Z_2$&$\Z_2$&$[s]=[k\Lambda_{n-1}],[c]=[k\Lambda_{n}]$\\
$D_{2n+1}$ $(n\ge2)$&$k$&$\Z_4$&$\Z_2$&$[s]=[k\Lambda_{n-1}]$\\
$E_6$&$k$&$\Z_3$&$\Z_2$&$[1]=[k\Lambda_1]$\\
$E_7$&$k$&$\Z_2$&$1$&$[1]=[k\Lambda_6]$\\
$E_8$&$2$&$\Z_2$&$1$&$[1]=[\Lambda_7]$\\
$E_8$&$k\neq2$&$1$&$1$&\\
$F_4$&$k$&$1$&$1$&\\
$G_2$&$k$&$1$&$1$&\\
\hline
\end{longtable}

\subsection{Glue codes of holomorphic VOAs of central charge $24$}

Let $V$ be a holomorphic VOA of central charge $24$ with $0<\rank V_1<24$.
Let $\Out_1(V)$ be the subgroup of $\Out(V)$ which preserves every simple ideal of $V_1$ and set $\Out_2(V)=\Out(V)/\Out_1(V)$.
Then $\Out_2(V)$ is the permutation group on the set of simple ideals of $V_1$ induced from $\Out(V)$.

Let $V_1=\bigoplus_{i=1}^s\g_i$ be the direct sum of simple ideals.
Let $S_i(=S_{{\g}_i})$ be the set of (the isomorphism classes of) simple current $L_{\hat{\g}_i}(k_i,0)$-modules, where $k_i$ is the level of $\g_i$ in $V$.
Then $S_i$ has an abelian group structure under the fusion product.

Let $S_\g=\prod_{i=1}^s S_i$ be the direct product of the groups $S_i$.
We often view a simple current $\langle V_1\rangle$-module as an element of $S_\g$ via the map $\bigotimes_{i=1}^sM^i\mapsto (M^1,\dots,M^s)$.
Let $\{1,2,\dots,s\}=\bigcup_{b\in B} I_b$ be the partition such that $\g_i\cong\g_j$ if and only if $i,j\in I_b$ for some $b\in B$, where $B$ is an index set.
The automorphism group $\Aut(S_\g)$ of $S_\g$ is defined to be $(\prod_{i=1}^s\Gamma(\g_i)):(\prod_{b\in B} \Sym_{|I_b|})$, where the symmetric group $\Sym_{|I_b|}$ acts naturally on $\prod_{i\in I_b}S_i$.

Let $G_V$ be the subgroup of $S_\g$ consisting of all (isomorphism classes of) simple current $\langle V_1\rangle$-submodules of $V$, which we call the \emph{Glue code} of $V$.
The automorphism group $\Aut(G_V)$ of $G_V$ is defined to be the subgroup of $\Aut(S_\g)$ stabilizing $G_V$.
Let $\Aut_1(G_V)=(\prod_{i=1}^s\Gamma(\g_i))\cap\Aut(G_V)$ and $\Aut_2(G_V)=\Aut(G_V)/\Aut_1(G_V)$.
Then $\Aut_1(G_V)$ is the subgroup of $\Aut(G_V)$ stabilizing every $S_i$, and $\Aut_2(G_V)$ acts faithfully on $\{S_i\mid 1\le i\le s\}$, or $\{\g_i\mid 1\le i\le s\}$, as a permutation group.
Clearly $\Aut(V)$ preserves $S_\g$.
Hence $\Out_i(V)\subset\Aut_i(G_V)$ for $i=1,2$.

By using the generators of the glue codes $G_V$ in \cite{Sc93}, we can easily determine $\Aut_1(G_V)$ and $\Aut_2(G_V)$ explicitly.
We also determine the shapes of $\Out_1(V)$ and $\Out_2(V)$; see Tables \ref{T:2A}.

\begin{longtable}[c]{|c|c|c|c|c|c|c|}
\caption{$\Aut_i(G_V)$ and $\Out_i(V)$} \label{T:2A}\\
\hline 
No.&Genus&$\g=V_1$&$\Aut_1(G_V)$&$\Out_1(V)$ &$\Aut_2(G_V)$ &$\Out_2(V)$ \\ \hline 
\hline 
$15$&$A$&$A_{1,1}^{24}$&$1$&$1$&$M_{24}$&$M_{24}$\\
$24$&&$A_{2,1}^{12}$&$\Z_2$&$\Z_2$&$M_{12}$&$M_{12}$\\
$30$&&$A_{3,1}^8$&$\Z_2$&$\Z_2$&$AGL_3(2)$&$AGL_3(2)$\\
$37$&&$A_{4,1}^6$&$\Z_2$&$\Z_2$&$\Sym_5$&$\Sym_5$\\
$42$&&$D_{4,1}^6$&$\Z_3$&$\Z_3$&$\Sym_6$&$\Sym_6$\\
$43$&&$A_{5,1}^4D_{4,1}$&$\Z_2$&$\Z_2$&$\Sym_4$&$\Sym_4$\\
$46$&&$A_{6,1}^4$&$\Z_2$&$\Z_2$&$\Alt_4$&$\Alt_4$\\
$49$&&$A_{7,1}^2D_{5,1}^2$&$\Z_2$&$\Z_2$&$\Z_2^2$&$\Z_2^2$\\
$51$&&$A_{8,1}^3$&$\Z_2$&$\Z_2$&$\Sym_3$&$\Sym_3$\\
$54$&&$D_{6,1}^4$&$1$&$1$&$\Sym_4$&$\Sym_4$\\
$55$&&$A_{9,1}^2D_{6,1}$&$\Z_2$&$\Z_2$&$\Z_2$&$\Z_2$\\
$58$&&$E_{6,1}^4$&$\Z_2$&$\Z_2$&$\Sym_4$&$\Sym_4$\\
$59$&&$A_{11,1}D_{7,1}E_{6,1}$&$\Z_2$&$\Z_2$&$1$&$1$\\
$60$&&$A_{12,1}^2$&$\Z_2$&$\Z_2$&$\Z_2$&$\Z_2$\\
$61$&&$D_{8,1}^3$&$1$&$1$&$\Sym_3$&$\Sym_3$\\
$63$&&$A_{15,1}A_{9,1}$&$\Z_2$&$\Z_2$&$1$&$1$\\
$64$&&$D_{10,1}E_{7,1}^2$&$1$&$1$&$\Z_2$&$\Z_2$\\
$65$&&$A_{17,1}E_{7,1}$&$1$&$1$&$1$&$1$\\
$66$&&$D_{12,1}^2$&$1$&$1$&$\Z_2$&$\Z_2$\\
$67$&&$A_{24,1}$&$\Z_2$&$\Z_2$&$1$&$1$\\
$68$&&$E_{8,1}^3$&$1$&$1$&$\Sym_3$&$\Sym_3$\\
$69$&&$D_{16,1}E_{8,1}$&$1$&$1$&$1$&$1$\\
$70$&&$D_{24,1}$&$1$&$1$&$1$&$1$\\
\hline
$5$&$B$&$A_{1,2}^{16}$&$1$&$1$&$\AGL_4(2)$&  $\AGL_4(2)$  \\ 
$16$&&$A_{3,2}^4A_{1,1}^4$&$\Z_2$&$\Z_2$&$\Z_2^4:\Sym_3$& $\Z_2^4:\Sym_3$ \\  
$25$&&$D_{4,2}^2C_{2,1}^4$&$1$&$1$  &$\Sym_2\times\Sym_4$&$\Sym_2\times\Sym_4$ \\ 

$26$&&$A_{5,2}^2C_{2,1}A_{2,1}^2$&$\Z_2$&$\Z_2$ &$\Sym_2\times\Sym_2$  &$\Sym_2\times\Sym_2$
\\ 
$31$&&$D_{5,2}^2A_{3,1}^2$&$\Z_2$&$\Z_2$ &$\Sym_2\times\Sym_2$&$\Sym_2\times\Sym_2$\\ 

$33$&&$A_{7,2}C_{3,1}^2A_{3,1}$&$\Z_2$ &$\Z_2$&$\Sym_2$ &$\Sym_2$
\\ 
$38$&&$C_{4,1}^4$&$1$&$1$ &$\Sym_4$&$\Sym_4$ \\ 
$39$&&$D_{6,2}C_{4,1}B_{3,1}^2$&$1$&$1$  & $\Sym_2$&$\Sym_2$\\ 
$40$&&$A_{9,2}A_{4,1}B_{3,1}$&$\Z_2$&$\Z_2$ &$1$ &$1$
\\ 

$44$&&$E_{6,2}C_{5,1}A_{5,1}$&$\Z_2$&$\Z_2$
&$1$&$1$\\ 

$47$&&$D_{8,2}B_{4,1}^2$&$1$&$1$ &$\Sym_2$&$\Sym_2$\\   
$48$&&$C_{6,1}^2B_{4,1}$&$1$&$1$ &$\Sym_2$&$\Sym_2$ \\ 
$50$&&$D_{9,2}A_{7,1}$&$\Z_2$&$\Z_2$ &$1$&$1$\\ 
$52$&&$C_{8,1}F_{4,1}^2$&$1$&$1$ &$\Sym_2$&$\Sym_2$\\
$53$&&$E_{7,2}B_{5,1}F_{4,1}$&$1$&$1$ &$1$&$1$\\ 
$56$&&$C_{10,1}B_{6,1}$&$1$&$1$ &$1$&$1$\\
$62$&&$B_{8,1}E_{8,2}$&$1$&$1$ &$1$&$1$\\ 
\hline 
$6$&$C$&$A_{2,3}^{6}$&$\Z_2$ &$1$&$\Sym_6$&$\Sym_6$\\ 
$17$&&${A_{5,3}}{D_{4,3}}A_{1,1}^{3}$&$\Z_2$&$1$&$\Sym_3$ & $\Sym_3$
\\ 
$27$&&$A_{8,3}A_{2,1}^2$&$\Z_2$ & $1$    & $\Sym_2$&$\Sym_2$ \\ 
$32$&&$ E_{6,3}{G_{2,1}}^3$&$\Z_2$&$1$&$\Sym_3$  &  $\Sym_3$ \\
$34$&&$  {D_{7,3}}{A_{3,1}}{G_{2,1}}$&$\Z_2$ &  $1$  
&  $1$& $1$
\\  
$45$&&$E_{7,3}A_{5,1}$ &$\Z_2$&$1$& $1$ & $1$  \\ \hline
$2$&$D$ &$A_{1,4}^{12}$ & $1$  & $1$  & $\Sym_{12}$& $M_{12}$\\
$12$ &&$B_{2,2}^6$ & $1$ & $1$ & $\Sym_6$& $\Sym_5$\\
$23$ &&$B^4_{3,2}$ & $1$ & $1$ &$\Sym_4$& $\Alt_4$\\
$29$ &&$B_{4,2}^3$ & $1$ & $1 $ & $\Sym_3$&  $\Sym_3$ \\
$41$ &&$B^2_{6,2}$ & $1$& $1$ & $\Sym_2$& $\Sym_2$  \\
$57$ &&$B_{12,2}$ & $1$& $1$ & $1$& $1$   \\ 
$13$ &&$D_{4,4}A^4_{2,2}$ & $\Sym_3\times\Z_2$ & $\Z_2$&$\Sym_4$ & $\Sym_4$\\
$22$ &&$C_{4,2}A^2_{4,2}$& $\Z_2$& $\Z_2$ &$\Sym_2$& $\Sym_2$ \\
$36$ &&$ A_{8,2}F_{4,2}$ & $\Z_2$& $\Z_2$&$1$  & $1$ \\ 
\hline
$7$ &$E$&$A_{3,4}^3A_{1,2}$ & $\Z_2^3$& $\Z_2^2$ &$\Sym_3$& $\Sym_3$
 \\
$18$ &&$A_{7,4}A_{1,1}^3$  & $\Z_2$& $1$ & $\Z_2$  & $\Z_2$\\  
$19$ &&$D_{5,4}C_{3,2}A_{1,1}^2$  &$\Z_2$&$ 1$ &$\Z_2$& $\Z_2$\\
$28$ &&$E_{6,4}A_{2,1}B_{2,1}$ &  $\Z_2$& $1$ &$1$& $1$\\
$35$ &&$C_{7,2}A_{3,1}$  & $\Z_2$&$1$ & $1$ & $1$\\
\hline
$9$&$F$&$A_{4,5}^2$&$ \Z_2^2$&$\Z_2$ &$\Sym_2$   & $\Sym_2$ \\
$20$&&$D_{6,5}A_{1,1}^2$&$1$ &$1$&$\Sym_2$   & $1$ \\ \hline
$8$&$G$ & $A_{5,6}B_{2,3}A_{1,2}$  & $\Z_2$ 
&$\Z_2$
&$1$& $1$\\
$21$ && $C_{5,3}G_{2,2}A_{1,1}$ &  $1$&
$1$& $1$ & $1$\\ \hline
$11$&$H$&$ A_{6,7}$&$\Z_2$ & $1$ & $1$ & $1$  \\ \hline
$10$ &$I$& $D_{5,8}A_{1,2}$   &  $\Z_2$ &$1$& $1$ &  $1$\\
\hline
$3$ &$J$&$D_{4,12}A_{2,6}$   &$\Sym_3\times\Z_2$ &$\Sym_3$& $1$ & $1$\\
$14$ &&$F_{4,6}A_{2,2}$   &$\Z_2$&$1$ & $1$ & $1$  \\
\hline
$4$&$K$ &$C_{4,10}$  & $1$ & $1$ & $1$ &$1$\\
\hline 
\end{longtable}

\end{document}